\let\OLDthebibliography\thebibliography
\renewcommand\thebibliography[1]{
  \OLDthebibliography{#1}
  \setlength{\parskip}{0pt}
  \setlength{\itemsep}{0pt plus 0.3ex}
}
\setlist[enumerate]{topsep = 0.5ex, leftmargin=1cm, itemsep = -2pt}
\setlist[itemize]{topsep = 0.5ex, leftmargin=1cm, itemsep = -2pt}
\theoremstyle{definition}
\newtheorem{theorem}{Theorem}[section]
\newtheorem{lemma}[theorem]{Lemma}
\newtheorem{corollary}[theorem]{Corollary}
\newtheorem{proposition}[theorem]{Proposition}
\newtheorem{definition}[theorem]{Definition}
\newcommand{\abs}[1]{\left\lvert #1 \right \rvert}
\newcommand{\norm}[1]{\lVert #1 \rVert}
\newcommand{\mc}[1]{\mathcal{#1}}
\newcommand{\m}[1]{\mathbb{#1}}
\def\ie{i.e. }
\renewcommand\Re{\operatorname{Re}}
\renewcommand\Im{\operatorname{Im}}
\def\a{\alpha}
\def\b{\beta}
\def\g{\gamma}
\def\G{\Gamma}
\def\d{\delta}
\def\t{\theta}
\def\s{\sigma}
\def\o{\omega}
\def\vare{\varepsilon}
\def\HH{{\mathbb H}}
\def\Chat{\hat{\m{C}}}
\def\eps{\varepsilon}
\def\diam{{\rm diam}}
\def\dist{{\rm dist}}
\def\dd{\,\mathrm{d}}
\newcommand{\ad}[1]{\underline{#1}}
\def\l{W}
\DeclareMathOperator{\SLE}{SLE}
\title{The Loewner energy of loops and regularity of \\ driving functions}
\author{Steffen Rohde\thanks{Department of Mathematics, University of Washington, Seattle, WA 98195, USA, email: rohde@math.washington.edu}  
\, and Yilin Wang\thanks{Department of Mathematics, ETH Z\"urich, Switzerland, email: yilin.wang@math.ethz.ch}}
\begin{document}

\maketitle

\begin{abstract} 
Loewner driving functions encode simple curves in 2-dimensional simply connected domains by real-valued functions. We prove that the Loewner driving function of a $C^{1,\b}$ curve (differentiable parametrization with $\b$-H\"older continuous derivative) is in the class $C^{1,\b-1/2}$ if $1/2<\b\leq 1$, and in the class $C^{0,\b + 1/2}$ if $0 \leq \b \leq 1/2$. This is the converse of a result of Carto Wong \cite{wong2014} and is optimal. We also introduce the Loewner energy of a rooted planar loop and use our regularity result to show the independence of this energy from the basepoint.
\end{abstract}

\section{Introduction}

Loewner \cite{Loewner1923} introduced a conformally natural way to encode a simple curve $\eta$ joining two boundary points of a simply connected plane domain $D$ by a continuous one dimensional real function $W$. This Loewner transform $\eta \mapsto W$ was instrumental in resolving the Bieberbach conjecture \cite{DeBranges1985}, and is the analytic backbone of the Schramm-Loewner evolution SLE \cite{Schramm2000}.

We review the (chordal) Loewner transform in Section \ref{sec_application}. In brief, after replacing $D$ by the upper half-plane $\HH$ via conformal mapping such that $\eta$ joins the boundary points $0$ and $\infty$, we have $W_t=g_t(\eta(t))$ if $\eta$ is parametrized by half-plane capacity and $g_t$ is the hydrodynamically normalized conformal map from $\HH \setminus \eta [0,t]$ onto $\HH.$

Recently, in \cite{friz2015} and \cite{wang2016} the {\it chordal Loewner energy} 
$\int_0^{\infty} \dot W(t)^2 /2 \,dt$ 
of $\eta$ was introduced independently, and basic properties (such as rectifiability)
of curves with finite energy were obtained. The chordal Loewner energy \emph{a priori} depends on the orientation of $\eta$, namely viewed
as a curve from $0$ to $\infty$ or from $\infty$ to $0$. However, the second author \cite{wang2016} proved the direction-independence 
(or reversibility).

In this paper, we generalize the definition of Loewner energy to simple loops on the Riemann sphere 
$\g: \m R \to \Chat$ where $\g$ is continuous, $1$-periodic and injective on $[0,1):$ 
We just observe that the limit when $\vare \to 0$ of the chordal energy of $\g[\vare, 1]$ in the simply 
connected domain $\Chat \setminus \g[0,\vare]$ exists in $[0,\infty]$ (Proposition~\ref{prop:loop_energy_limit_exists}), and define it as the loop Loewner energy of $\g$ rooted at $\g(0)$.
Note that circles have loop energy $0$.
Intuitively, the loop energy measures how much the Jordan curve $\g[0,1]$ differs from a circle seen from the root $\g(0)$, 
in a M\"obius invariant fashion. 
The loop Loewner energy generalizes the chordal Loewner energy: Indeed, if we apply $z \mapsto z^2$ to a chord $\eta$ from $0$ to $\infty$ in 
$\m H$, the positive real line together with the image of~$\eta$ forms a loop~$\g$ through $\infty$. 
It is clear that its loop energy rooted at $\infty$ (\ie we parametrize the loop such that $\g (0) = \infty$) equals the chordal 
energy of~$\eta$.

Note also that the loop energy neither depends on any increasing reparametrization of $\g$ fixing the root, nor on the direction of parametrization. The latter fact basically comes from the chordal reversibility, which can be used to show that $\tilde \g (t) = \g(1-t)$ has the same energy as $\g$ (see Section~\ref{sec_loop_energy} for details).
But it depends a priori on the root $\g(0)$ where the limit is taken, not only on the Jordan curve $\g[0,1]$. However, our first main result
states:

\begin{theorem} \label{thm_main_2} 
The loop Loewner energy is root-invariant.
\end{theorem}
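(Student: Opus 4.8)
The plan is to reduce root-invariance to a reversibility statement for \emph{open arcs}, using the additivity of the chordal energy along the Loewner flow together with the already-established reversibility of the loop energy; the regularity theorem then enters precisely to justify the limit that identifies the two arc-energies. Write $I_L(\g,p)$ for the loop energy rooted at $p$ and $I_D(\eta)$ for the chordal energy of a (possibly truncated) curve $\eta$ in a simply connected domain $D$. First I would fix two prospective roots $p=\g(0)$ and $q=\g(s)$, which cut the loop into the arcs $\alpha=\g[0,s]$ (from $p$ to $q$) and $\beta=\g[s,1]$ (from $q$ to $p$). Since $\int\dot W^2/2$ splits at any intermediate capacity time, the additivity of the energy under concatenation of Loewner hulls gives, for $\vare<s$,
\[
I_{\Chat\setminus\g[0,\vare]}(\g[\vare,1]) \;=\; I_{\Chat\setminus\g[0,\vare]}(\g[\vare,s]) \;+\; I_{\Chat\setminus\g[0,s]}(\g[s,1]),
\]
the second term being independent of $\vare$. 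Letting $\vare\to0$ and writing $A\is I_{\Chat\setminus\alpha}(\beta)$ yields $I_L(\g,p)=A+B^\alpha_p$, where $B^\alpha_p\is\lim_{\vare\to0}I_{\Chat\setminus\g[0,\vare]}(\g[\vare,s])$ is the energy of growing $\alpha$ from the root $p$ and ending at the interior point $q$.

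Next I would produce two further such expressions. Applying the same decomposition to the reversed parametrization $\tilde\g(t)=\g(1-t)$ and invoking the loop reversibility $I_L(\g,p)=I_L(\tilde\g,p)$ gives a second identity $I_L(\g,p)=A'+B^\beta_p$, with $A'\is I_{\Chat\setminus\beta}(\alpha)$ and $B^\beta_p$ the energy of growing $\beta$ from $p$ (ending at the interior point $q$). Decomposing instead from the root $q$ gives $I_L(\g,q)=A'+C^\beta_q$, where $C^\beta_q$ is the energy of growing the \emph{same} arc $\beta$ from the other end $q$ (ending at the interior point $p$). Subtracting, all macroscopic terms cancel and
\[
I_L(\g,q)-I_L(\g,p) \;=\; C^\beta_q-B^\beta_p,
\]
so the theorem reduces to the single identity $B^\beta_p=C^\beta_q$, i.e. to the reversibility of the open arc $\beta$ equipped with a vanishing root-regularization: the limiting energy of $\beta$ should not depend on the endpoint from which it is grown. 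As every identity is read in $[0,\infty]$, the same chain shows that finiteness at one root is equivalent to finiteness at the other, disposing of the infinite-energy case.

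The crux is this last identity. To prove it I would truncate $\beta$ at both ends, setting $\beta_\vare\is\g[s+\vare,1-\vare]$ and $S_\vare\is I_{\Chat\setminus(\g[s,s+\vare]\cup\g[1-\vare,1])}(\beta_\vare)$; since $\beta_\vare$ is now an honest chord between two boundary prime ends of a fixed doubly-slit domain, the chordal reversibility makes $S_\vare$ manifestly independent of the growth direction. It then remains to show $B^\beta_p=\lim_{\vare\to0}S_\vare=C^\beta_q$, i.e. that replacing an interior terminal point by a truncated-and-slit one changes the energy only by $o(1)$. This terminal discrepancy is governed by the behaviour of the driving function of $\beta$ near the regular endpoints $p$ and $q$.

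The main obstacle is exactly to bound this terminal error uniformly in $\vare$ while exchanging the limit with the reversal, and this is where the regularity theorem is indispensable. Here I would argue that finite loop energy forces the driving function into $H^1\subset C^{0,1/2}$, hence, by Wong's converse (the very statement we are inverting), forces the loop to be locally $C^1$ at every point; the quantitative driving-function estimates underlying our regularity theorem then bound the energy of a short terminal arc near such a regular boundary point by a quantity tending to $0$ with $\vare$, so that the one-sided regularizations and the symmetric truncation share the same limit. Taming this interaction between the root-regularization limit and the reversal is the step I expect to require the most care. A final continuity (or simply the arbitrariness of $q$) then upgrades the two-root equality to full root-invariance.
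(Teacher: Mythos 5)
Your opening reduction is sound and in fact coincides with the paper's: decomposing $I^L(\g,x)=I^A(\g_1,x)+I^C_{\g_1}(\g_2)$ at two prospective roots (this is exactly the first display in the proof of Proposition~\ref{prop_regular_invariant}) reduces root-invariance to showing that the arc energy of a fixed arc is the same at its two endpoints. The gaps lie in how you propose to prove this endpoint-invariance. First, your symmetric truncation $S_\vare = I_{\Chat\setminus(\g[s,s+\vare]\cup\g[1-\vare,1])}(\beta_\vare)$ is not a chordal Loewner energy at all: the complement in $\Chat$ of the two \emph{disjoint} slits $\g[s,s+\vare]$ and $\g[1-\vare,1]$ is doubly connected, whereas the chordal energy --- and the reversibility theorem whose application you call ``manifest'' --- is defined only for a chord joining two marked boundary points of a simply connected domain. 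The two-slit energy of Corollary~\ref{cor_two_slits} does not rescue this, since it concerns two slits growing from the boundary of a simply connected domain, not a chord running between the tips of two disjoint slits in the sphere. So the quantity whose direction-independence was supposed to come for free is not even defined in the paper's framework.

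Second, and more fundamentally, your mechanism for controlling the terminal error rests on a false claim. Finite loop energy does put the driving function in $H^1\subset C^{0,1/2}$, but Theorem~\ref{thm_wong} requires modulus $C^{0,1/2+\beta}$ with $\beta>0$; at the exact exponent $1/2$ one only gets quasislits, not $C^1$ curves. Indeed, Section~\ref{sec_comments} of the paper constructs a finite-energy loop with an infinite slow spiral at a point, hence with no tangent there: a finite-energy loop need \emph{not} be locally $C^1$ at the root, so the quantitative estimates of Theorem~\ref{thm_main} simply do not apply at an arbitrary endpoint $p$ or $q$, and your terminal-error bound fails precisely at irregular roots. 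This is why, after the common reduction, the paper takes a different route: it proves endpoint-invariance for sufficiently regular loops by completing the arc with a $C^{1,1}$ chain of circular arcs --- each circular arc has arc energy zero rooted anywhere, so invariance propagates arc by arc (Proposition~\ref{prop_regular_invariant}, using Proposition~\ref{prop_finite_loop} and hence Theorem~\ref{thm_main} to know the completed loop has finite energy) --- and then reaches general finite-energy loops by an approximation scheme your proposal has no analogue of: lower semicontinuity of the energy (Lemma~\ref{lem_lower_semicontinuity}), existence of energy minimizers through finitely many points which are weakly $C^{1,1}$ via the geodesic-pair classification (Proposition~\ref{prop_optimal_loop}), and the representation of the energy as a supremum over finite point configurations (Proposition~\ref{prop_approximation_loop}). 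Without a substitute for this approximation step, the reduction to arc reversibility cannot be closed. A smaller point: your subtraction $I^L(\g,q)-I^L(\g,p)=C^\beta_q-B^\beta_p$ presupposes $A'<\infty$, so in $[0,\infty]$ the equivalence of finiteness at the two roots does not follow from ``the same chain'' without a separate argument.
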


In particular, this result shows that the loop Loewner energy is a conformal invariant on the set of {\it unrooted} loops on the Riemann sphere,
which attains its minimum $0$ only on circles.

In our proof of the root-invariance, we approximate the curve by well-chosen regular curves and are led to the following question:
What can we say about the relation between the regularity of the driving function and the regularity of the curve? 
Prior to this work, only one direction was well understood. 
Slightly imprecisely, the following results state that $C^\a$ driving functions generate $C^{\a + 1/2}$ curves for $\a > 1/2$, where $C^{\a}$ is understood with the usual convention as $C^{n ,\b}$, where $n$ is the integer part of $\a$ and $\b = \a - n$ (see Section~\ref{sec_prelim}). 
More precisely: 

\begin{theorem}(\cite{wong2014})\label{thm_wong}
If $\b \in (0,1/2]$ and $W \in C^{0, 1/2+\b}([0,T])$, then the Loewner curve $\eta$ in $\m H$ generated by $W$ is a simple curve of class $C^{1,\b}$ if reparametrized as $t \mapsto \eta(t^2)$.
    If $W \in C^{1,\b}$, the curve is in $C^{1,\b+1/2}$ (weakly $C^{1,1}$ when $\b = 1/2$).      
\end{theorem}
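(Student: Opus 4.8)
The plan is to reduce the statement to an analysis of the backward (reverse-time) Loewner flow, for which the tip of the curve is a regular solution of an ordinary differential equation, and then to read off the regularity of $\eta$ from that of $W$ through a square-root change of variables. First I would fix $t$, reverse and recenter the driving function by setting $\tilde W_s = W_{t-s} - W_t$ for $s \in [0,t]$, and solve the backward Loewner equation
\[
\partial_s Z_s = \frac{-2}{Z_s - \tilde W_s}, \qquad Z_0 = 0 .
\]
Using the identity $G_s = g_{t-s}\circ g_t^{-1}$, which solves exactly this equation with $G_t = g_t^{-1}$, one checks that $Z_t = g_t^{-1}(W_t) - W_t$, so that $\eta(t) = W_t + Z_t$ and the local geometry of the curve at its tip is encoded by $Z$.

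When $W\equiv 0$ the equation gives $Z_s = 2i\sqrt s$ and hence $\eta(t)=2i\sqrt t$; this already explains why the natural parametrization is $t\mapsto \eta(t^2)$, namely it is the square-root change of variable that turns the vertical slit into a straight (smooth) segment. Accordingly, the key technical step is to perform the substitution $s = u^2$ and to show that $u \mapsto Y_u := Z_{u^2}$ extends to a $C^1$ map up to $u = 0$. The transformed equation reads $\partial_u Y_u = -4u / (Y_u - \tilde W_{u^2})$, whose right-hand side tends to $2i$ as $u\to 0$ precisely because the singular factor $1/\sqrt s$ is cancelled by the leading term $Y_u \approx 2iu$.

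Writing $Y_u = 2iu\,(1 + R_u)$, I would derive the ODE satisfied by the remainder $R$; since the singularity is removed, its right-hand side is controlled by the modulus of continuity of $\tilde W$, and a Gronwall/contraction argument gives $R_u \to 0$ as $u \to 0$ at a rate governed by the H\"older norm of $W$. Thus $\partial_u Y_u$ extends continuously to the value $2i$ at $u=0$, yielding a well-defined, non-vanishing tangent for the reparametrized curve, whose H\"older continuity is inherited from that of $W$ with a gain of $1/2$ coming from solving the Loewner ODE (the Jacobian factor $u$ in $\dd s = 2u\,\dd u$). One should also track the conformal derivative of the backward map and show it stays bounded above and below near the tip, which is what upgrades ``$Z$ is $C^1$ in the radial variable'' to ``$\eta$ is a genuine $C^1$ curve with non-vanishing tangent.''

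Finally I would transfer these pointwise-in-$t$ estimates into regularity of $t\mapsto \eta(t^2)$, and this is where I expect the main obstacle. The backward flow $Z = Z^{(t)}$ depends on $t$ both through its running time $s=t$ and through the reversed driving $\tilde W^{(t)}_s = W_{t-s} - W_t$, so differentiating $\eta(t) = W_t + Z^{(t)}_t$ in $t$ requires differentiating the flow with respect to its driving function and controlling this variation uniformly up to the singular endpoint $u=0$. The two regimes in the statement, and the $C^{1,\b}$ case, then correspond to exactly how much regularity of $W$ survives this procedure: in each case the curve gains the half-derivative produced by integrating the Loewner equation, so that $C^{\a}$ driving yields a $C^{\a+1/2}$ curve.
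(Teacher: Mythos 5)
First, note that the paper does not prove Theorem~\ref{thm_wong} at all: it is imported from Wong \cite{wong2014} (with \cite{lindtran2014regularity} treating the smoother range), so the comparison must be with that published proof. Your framework is in fact the same one used there: the reverse flow $Z_s$ with driver $\tilde W_s = W_{t-s}-W_t$, the identity $Z_t = g_t^{-1}(W_t)-W_t = \eta(t)-W_t$, and the square-root substitution $Y_u = Z_{u^2}$. Your fixed-$t$ analysis is essentially right: since $|\tilde W_{u^2}| \leq C u^{1+2\beta}$, writing $Y_u = 2iu(1+R_u)$ gives $R_u' = \bigl(-2R_u + O(u^{2\beta}) + O(R_u^2)\bigr)/u$, and the decaying solution satisfies $R_u = O(u^{2\beta})$, so $\partial_u Y_u = 2i + O(u^{2\beta})$. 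Two prerequisites are glossed over but fixable: the trace must first be shown to exist and be simple (for $\beta \in (0,1/2]$, the $C^{0,1/2+\beta}$ hypothesis forces the local $\mathrm{Lip}(1/2)$ norm of $W$ to be small on short intervals, so Lind's theorem \cite{lind2005sharp} yields a quasislit), and the singular initial condition $Z_0=0$ must be justified by starting the backward flow at $iy$ and letting $y\to 0$.

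The genuine gap is the step you yourself flag as ``the main obstacle'': converting these fixed-$t$ flow estimates into H\"older continuity of $t \mapsto \partial_t[\eta(t^2)]$. This is not a routine transfer --- it is the actual content of the theorem. The trajectory $s \mapsto Z^{(t)}_s$ is not the curve; it is the image of the single point $\eta(t)$ under the earlier maps $g_{t-s}$. To compare $\eta(t_1)$ with $\eta(t_2)$ one must write $\eta(t_2)-\eta(t_1) = f_{t_1}(Z^{(t_2)}_{t_2-t_1}+W_{t_2}) - f_{t_1}(W_{t_1})$ and control the boundary behavior of $f_{t_1}' = (g_{t_1}^{-1})'$ near the singularity at $W_{t_1}$, where $f_{t_1}'$ vanishes linearly (the tip opens the angle $\pi$ to $2\pi$), uniformly in $t_1$ and with quantitative rates tied to $\|W\|_{0,1/2+\beta}$. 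This requires estimating $\log f_t'$ along vertical segments and the variation of the flow with respect to its driver up to the corner $u=0$; it is exactly here that the exponents $C^{1,\beta}$ versus $C^{1,\beta+1/2}$ arise, and where the borderline logarithmic correction (weak $C^{1,1}$ at $\beta=1/2$, matching the sharpness examples of \cite{lindtran2014regularity}) must be extracted --- your sketch produces none of these estimates, and the $C^{1,\beta}$-driver half of the statement is not addressed beyond a sentence. So the proposal is a correct reconstruction of the known proof's architecture together with an accurate diagnosis of where the difficulty lies, but the difficult part is named rather than proved.
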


We will comment on the choice of parametrization later on. Similarly,

\begin{theorem}[\cite{wong2014} and \cite{lindtran2014regularity}] \label{thm_lind_tran}
If $\a > 3/2$ and $W \in C^{\a}$, then $W$ generates a simple curve of class $C^{\a+1/2}$ if $\a +1/2 \notin \m N$, and in the Zygmund class $\Lambda_*^{\a -1/2}$ otherwise.
\end{theorem}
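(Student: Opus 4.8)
Since this statement is quoted from \cite{wong2014} and \cite{lindtran2014regularity}, I would reconstruct it directly from the Loewner equation
\[
\partial_t g_t(z) = \frac{2}{g_t(z) - W_t}, \qquad g_0(z) = z,
\]
recovering the tip as $\eta(t) = g_t^{-1}(W_t)$. The central obstruction is that $g_t$ is square-root degenerate at the tip: the model vertical-slit solution $g_t(z) = \sqrt{z^2 + 4t}$ has inverse $w \mapsto \sqrt{w^2 - 4t}$ with a critical point over $W_t = 0$, so $\dot\eta(t)$ cannot be read off by naively differentiating $g_t^{-1}(W_t)$. The plan is first to pass to the centered backward flow: fixing a time $T$ and setting $\widehat W_s = W_{T-s}$, solve $\partial_s \widehat h_s(w) = -2/(\widehat h_s(w) - \widehat W_s)$ with $\widehat h_0 = \mathrm{id}$, which transports a small perturbation of the driving value along $s \in [0,T]$ into a neighborhood of $\eta(T)$ in the original coordinate. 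This replaces the degenerate inverse map by an honest flow that one can differentiate and estimate.

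Next I would desingularize the square root. Writing the boundary dynamics in terms of $U_s = \widehat h_s(w) - \widehat W_s$ and introducing a square-root coordinate (tracking essentially $U_s^2$, which vanishes to first order at the tip rather than to order $1/2$), the flow becomes a \emph{non-degenerate} ODE whose right-hand side is smooth in the state variable and couples to the driving function only through $\dot W$ and lower-order terms. Integrating this regular ODE and unwinding the substitution, $\eta(T)$ — or more naturally $\eta(T) - W_T$ — is represented as the output of an operator applied to $W$ whose leading part is a half-order fractional integration against a kernel of size $(T-s)^{-1/2}$. This explains structurally both the gain of exactly $1/2$ a derivative and the appearance of a Zygmund space at integer thresholds, since fractional integration of order $1/2$ maps the Hölder scale $C^\alpha$ into $C^{\alpha+1/2}$ precisely when $\alpha + 1/2 \notin \mathbb{N}$, and only into the corresponding Zygmund class otherwise.

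With this representation the regularity conclusion follows from the mapping properties of half-order smoothing on the Hölder--Zygmund scale, yielding the stated dichotomy between $C^{\alpha+1/2}$ and the Zygmund class exactly as in the theorem. To reach all $\alpha > 3/2$ rather than just the first range, I would bootstrap: differentiating the desingularized ODE $k$ times and verifying that each derivative of the solution lands in the space predicted by the smoothing estimate, using the lower-regularity statement (Theorem~\ref{thm_wong}) as the base case of the induction. The condition $\alpha > 3/2$ is what guarantees the tip velocity $\dot\eta$ is well-defined and nonvanishing, so that the square-root coordinate stays a genuine change of variables throughout the flow and the iteration can proceed.

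The hard part will not be the formal smoothing step but the \emph{uniform} control of the backward flow up to the boundary. One must bound the conformal distortion $|g_t'|$ near the tip along the entire flow (not merely at the singular instant), show the square-root substitution remains non-degenerate and its inverse remains regular uniformly in $s$, and verify that the error terms beyond the leading fractional-integration kernel are genuinely of lower order in the relevant Hölder--Zygmund norms. The endpoint case $\alpha + 1/2 \in \mathbb{N}$ is also delicate, since Hölder continuity is not preserved under the half-integration operation exactly at integer target smoothness, which is precisely why the Zygmund class — rather than $C^{\alpha+1/2}$ — is the sharp conclusion there.
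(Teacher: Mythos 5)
A preliminary remark: the paper does not prove Theorem~\ref{thm_lind_tran} at all --- it is quoted from \cite{wong2014} and \cite{lindtran2014regularity} --- so there is no internal proof to compare against, and your proposal has to be measured against the cited works. Against that benchmark, your outline reproduces the actual architecture of \cite{lindtran2014regularity} quite faithfully: they too work with the centered inverse map $h_t(w) = g_t^{-1}(w + W_t)$ (equivalently the backward flow with time-reversed driver), recover the tip derivative through the square-root degeneracy via the identity $\eta'(t) = -2 \lim_{y \downarrow 0} h_t'(iy)/(iy)$, control $h_t'$ and its higher derivatives along the reverse flow, and obtain the gain of $1/2$ by matching the vertical scale $y \simeq \sqrt{\delta}$ to the time increment $\delta$ --- which is precisely the $(t-s)^{-1/2}$ kernel behaviour you package as ``half-order fractional integration'' --- before bootstrapping to higher derivatives, with the Zygmund class emerging exactly at the integer thresholds. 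So the route you describe is the route, not a divergent one.

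As a proof, however, the two assertions carrying all the weight are stated rather than established. First, the claim that after the square-root substitution the flow is a non-degenerate ODE whose output is ``$W$ smoothed by half an order plus genuinely lower-order errors'' is the entire content of the main estimates of \cite{lindtran2014regularity}; nothing in your sketch shows the remainder terms are lower order in H\"older--Zygmund norm, and at the thresholds $\a + 1/2 \in \m N$ this is exactly where the work lies, since the classical mapping property of half-integration you invoke only disposes of the leading term. Second, you treat the nonvanishing of $\dot\eta$ as a consequence of $\a > 3/2$ that licenses the change of variables throughout the flow; in \cite{lindtran2014regularity} the statement $\eta'(t) \neq 0$ for $t>0$ is itself one of the theorems, proved by the same flow estimates, so at that point your argument is circular unless you prove it first. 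Relatedly, in the capacity parametrization $|\dot\eta(t)| \sim t^{-1/2}$ as $t \to 0$ (the curve leaves $\m R$ like $2i\sqrt{t}$), so the asserted regularity can only hold on $(0,T]$ or after a reparametrization as in Theorem~\ref{thm_wong}; your closing paragraph gestures at ``uniform control up to the boundary,'' but the delicate boundary is at $t=0$, not along the slit, and it affects the statement of the theorem, not merely its proof. None of this makes the plan wrong --- it is an accurate roadmap of the cited proof --- but the roadmap omits precisely the estimates that constitute it.
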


The Zygmund class $\Lambda_*^{\a-1/2}$ contains the class $C^{\a+1/2}$. 
In the other direction, one can ask about the regularity of the driving function given the regularity of the curve.
Here Earle and Epstein proved the following result 
using a local quasiconformal variation near the tip of the curve: 
\begin{theorem}[\cite{EE2001}]
  If $n \in \m Z$, $n \geq 2$ and $\eta \in C^n$, then its driving function is $C^{n-1}$ on the half-open interval $(0,T]$. 
\end{theorem}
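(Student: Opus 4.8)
The plan is to prove the local statement: for each fixed $t_0\in(0,T]$ the driving function $W$ is of class $C^{n-1}$ in a neighbourhood of $t_0$. Since $t_0>0$, the tip $\eta(t_0)$ is an interior point of $\HH$, so the slit domain $\Omega_{t_0}:=\HH\setminus\eta[0,t_0]$ ends at a genuine interior slit-tip; this is precisely why the statement excludes $t=0$, where the tip sits on $\m R$ and the relevant geometry degenerates. We may assume $\eta$ is a regular $C^n$ curve ($\dot\eta\neq0$), since the half-plane capacity parametrization of a $C^n$ arc is itself $C^n$ with non-vanishing derivative. The strategy is to realize the motion of the slit $\eta[0,t]$ near $t_0$ by a family of quasiconformal homeomorphisms whose complex dilatations depend $C^{n-1}$-smoothly on $t$, and then to transport this regularity to the uniformizing maps $g_t$ via the measurable Riemann mapping theorem with parameters.

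\textbf{Construction of the deformation.} Working in a small neighbourhood $U$ of $\eta(t_0)$ that avoids $0$ and $\infty$, I would first straighten the arc $\eta$ to a line segment by a $C^n$ diffeomorphism of $U$; moving the endpoint of a straight segment is then accomplished by a smooth, compactly supported flow whose velocity field is built from the tangent $\dot\eta$. Conjugating back yields diffeomorphisms $\psi_{t_0,t}\colon\Omega_{t_0}\to\Omega_t$, equal to the identity outside $U$, which extend to the boundary slits and send the tip $\eta(t_0)$ to the tip $\eta(t)$. Because they arise from the $C^n$ curve by integrating a flow whose velocity involves $\dot\eta\in C^{n-1}$, the associated Beltrami coefficients $\mu_t=\partial_{\bar z}\psi_{t_0,t}/\partial_z\psi_{t_0,t}$ are supported in $U$ and depend $C^{n-1}$-continuously on $t$; the single lost derivative is exactly the one spent in passing from the deformation to its complex dilatation.

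\textbf{Transport and extraction of $W_t$.} Conjugating by the fixed conformal maps, set $F_t:=g_t\circ\psi_{t_0,t}\circ g_{t_0}^{-1}\colon\HH\to\HH$. This is a quasiconformal self-map, hydrodynamically normalized (since $g_t,g_{t_0}$ are and $\psi_{t_0,t}$ is the identity near $\infty$), whose Beltrami coefficient is the compactly supported push-forward of $\mu_t$ by $g_{t_0}$. By the Ahlfors--Bers theorem on smooth dependence of the normalized solution of the Beltrami equation on its coefficient, $t\mapsto F_t$ is of class $C^{n-1}$, and $F_t$ extends $C^{n-1}$-regularly across a neighbourhood of the real point $W_{t_0}=g_{t_0}(\eta(t_0))$. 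Evaluating the identity $g_t\circ\psi_{t_0,t}=F_t\circ g_{t_0}$ at the tip $\eta(t_0)$ and using $\psi_{t_0,t}(\eta(t_0))=\eta(t)$ gives
\[
W_t=g_t(\eta(t))=F_t(W_{t_0}),
\]
so $t\mapsto W_t$ inherits the $C^{n-1}$ regularity of $t\mapsto F_t$ evaluated at the fixed point $W_{t_0}$.

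\textbf{Main obstacle.} The delicate point is the construction of $\psi_{t_0,t}$ so that (i) it genuinely carries the slit domains onto one another with tip matched to tip, and (ii) its complex dilatation depends on $t$ with exactly $C^{n-1}$ regularity in a function-space topology strong enough that the quantitative Ahlfors--Bers dependence, together with the interior regularity of $F_t$ where its coefficient is smooth, permits evaluation at $W_{t_0}$ with no further loss. One must verify that no additional degeneration occurs at the tip: this is guaranteed by never differentiating $g_t$ at the moving tip, where $g_t'$ blows up like an inverse square root, but only evaluating the auxiliary map $F_t$ at the \emph{fixed} point $W_{t_0}$, where it is as regular as the Beltrami coefficient allows.
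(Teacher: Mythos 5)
A preliminary remark: this paper contains no proof of the statement --- it is imported from Earle--Epstein \cite{EE2001}, and the paper says only that their proof works ``using a local quasiconformal variation near the tip of the curve'' (in the radial setting, transferred to the chordal one by a change of coordinates). Your proposal is precisely that strategy: deform the slit by quasiconformal maps $\psi_{t_0,t}$ supported near the tip, conjugate to $F_t = g_t\circ\psi_{t_0,t}\circ g_{t_0}^{-1}$, invoke Ahlfors--Bers dependence on parameters, and read off $W_t = F_t(W_{t_0})$. So in approach you coincide with the cited source rather than with anything proved in this paper, and the skeleton is sound; but two steps, as written, are genuine gaps.

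The serious one is your opening reduction: ``the half-plane capacity parametrization of a $C^n$ arc is itself $C^n$ with non-vanishing derivative.'' This is unjustified and essentially circular. The regularity of $t\mapsto\eta(t)$ in capacity time is governed by the regularity of the driving function --- by Theorem~\ref{thm_lind_tran}, a $C^{n-1}$ driver yields a capacity-parametrized curve of class $C^{n-1/2}$, not $C^n$ --- so the smoothness of the capacity parametrization is a \emph{consequence} of the theorem you are proving, not an admissible input, and at the strength you assert it is more than the known results deliver. The repair is to run your variation in a fixed $C^n$ parametrization $s$: since $\psi_{s_0,s}$ is the identity near $\infty$, one has $F_s(z) = z + \bigl(\mathrm{hcap}(s)-\mathrm{hcap}(s_0)\bigr)/z + O(1/z^2)$, so $\mathrm{hcap}(s)$ (extracted by a contour integral of $F_s$ over a large circle) and $\lambda(s)=F_s(\lambda(s_0))$ are both $C^{n-1}$ in $s$; together with $\mathrm{hcap}'(s)>0$ (which follows from the square-root behavior of the centered mapping-out function at the tip, in the spirit of Lemma~\ref{lem_cap_comparison}) this gives $W\in C^{n-1}$ in capacity time by composing with the $C^{n-1}$ inverse of $s\mapsto \mathrm{hcap}(s)/2$. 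The second flaw is your claim that $F_t$ ``extends $C^{n-1}$-regularly across a neighbourhood of the real point $W_{t_0}$'': the pushed-forward Beltrami coefficient of $F_t$ is genuinely discontinuous at $W_{t_0}$, because $g_{t_0}^{-1}$ has a critical point there (the square-root singularity of $g_{t_0}$ at the tip), so the conjugation factor $\overline{(g_{t_0}^{-1})'}/(g_{t_0}^{-1})'$ has no limit at $W_{t_0}$ --- and this cannot be designed away, since the dilatation of $\psi_{t_0,t}$ cannot in general vanish near the tip it must move. Fortunately no spatial regularity is needed: extend the coefficient to $\m C$ by Schwarz reflection, take the Ahlfors--Bers solution normalized at $\infty$ (which coincides with $F_t$ by uniqueness and symmetry), and use only the $C^{n-1}$ dependence in $t$ of the solution evaluated at the now-interior \emph{fixed} point $W_{t_0}$ --- which is exactly what your final paragraph gestures at. With these two repairs your argument goes through and reproduces the Earle--Epstein proof.
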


They stated the result in the radial setting, but using a change of coordinate it is not hard to see that the regularity of the driving function remains the same in the chordal case. Their result precedes the work of Wong, Lind and Tran, which in turn supported the natural conjecture that $C^{\a+1/2}$ curves should have $C^{\a}$ driving functions when $\a > 1/2$.  

The second main result of this paper is a proof of this conjecture in the case $1/2 < \a \leq 3/2$.
It is the converse of Theorem~\ref{thm_wong} when neither $\a$ nor $\a + 1/2$ is an integer. 
We will discuss the remaining cases of higher regularity in Section~\ref{sec_comments}. 

\noindent {\bf Conventions:} 
We say that an (arc-length parametrized) simple arc $\g: [0,S]\to \m C \setminus \m R_{>0}$ of regularity at least $C^1$  is \emph{tangentially attached} to $\m R_+$ if $\g (0) = 0$, and the right-derivative $\g' (0) = -1$.
In this paper, the curve $\g$ is always at least $C^1$ and arc-length parametrized (we use the variable $s$). 
Loewner driving functions are defined with respect to capacity parametrization (we use the variable $t$).
We use $\eta$ for Loewner curves in $\m H$, in particular for $\sqrt \g$, where $\sqrt{\cdot}$ on $\m C \setminus \m R_+$ is taking values in $\m H$.
 Let $T$ be the half-plane capacity of $\sqrt {\g[0,S]}$.

\begin{theorem}\label{thm_main} Let $0<\b\leq 1$, and $\g$ be a $C^{1,\b}$ simple arc tangentially attached to $\m R_+$.
   The driving function $W$ of $\sqrt \g$ has the following regularity on the closed interval $[0,T]$:
   \begin{itemize}
   \item $C^{0, \b+1/2}$ if $0<\b < 1/2$;  
   \item weakly $C^{0,1}$, if $\b = 1/2$;
   \item $C^{1, \b - 1/2}$ with $\dot W_0 = 0$,  if $1/2 < \b <1$;
   \item weakly $C^{1,1/2}$, if $\b = 1$.
   \end{itemize}
   Their respective norm is bounded by a function of both the local regularity $\norm \g_{1,\b}$ and constants associated with the global geometry of $\g$.
\end{theorem}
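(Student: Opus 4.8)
The plan is to reduce the regularity of $W$ to that of a few conformal quantities attached to the tip, and then to read off those quantities from the local geometry of $\g$ by means of a quantitative boundary-regularity estimate for conformal maps. Write $f_t = g_t^{-1}\colon \HH \to \HH \setminus \eta[0,t]$ for the inverse Loewner maps, so that $f_t(W_t) = \eta(t)$. Since $W_t$ is a boundary branch point (the two sides of the slit are glued at the tip), $f_t$ has the expansion $f_t(W_t + u) = \eta(t) + \tfrac12 f_t''(W_t)\,u^2 + \tfrac16 f_t'''(W_t)\,u^3 + \cdots$ with $f_t'(W_t)=0$. Recentring at the driving point, $\tilde f_t(u) := f_t(W_t+u)$ satisfies $\partial_t \tilde f_t(u) = \tilde f_t'(u)\bigl(\dot W_t - 2/u\bigr)$, and matching the coefficients of $u^0$ and $u^1$ yields the two identities $\dot\eta(t) = -2 f_t''(W_t)$ and
\[ \dot W_t = \frac{f_t'''(W_t)}{f_t''(W_t)}. \]
(Both check out against the vertical slit $f_t(\zeta)=\sqrt{\zeta^2-4t}$, where they return $\dot\eta = i/\sqrt t$ and $\dot W \equiv 0$.) Thus $\arg f_t''(W_t)$ is the tip tangent direction, $|f_t''(W_t)|$ the conformal speed, and $f_t'''/f_t''$ plays the role of a curvature at the tip. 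In the differentiable regime $\b>1/2$ the task becomes to show $t\mapsto f_t'''(W_t)/f_t''(W_t)$ is Hölder-$(\b-1/2)$; in the regime $\b\le 1/2$, where $\dot W$ need not exist, I would instead estimate the increments $W_{t+\d}-W_t$ directly through the small slit $g_t(\eta[t,t+\d])$ near $W_t$.

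Next I would settle the change of variables between the arc-length parameter $s$ of $\g$ and the capacity parameter $t$. Using $\dot\eta = -2f_t''$ together with $|\,\mathrm d\eta/\mathrm ds| = 1/(2|\g(s)|^{1/2})$ (since $\eta=\sqrt\g$ and $\g$ is arc-length) one sees that $\mathrm dt/\mathrm ds$ is bounded above and below; in particular the tangential attachment makes the curve vertical-slit-like at the root, so $t\asymp s$ there and $\dot W_0 = 0$. Consequently Hölder control in $s$ transfers to Hölder control in $t$ with the \emph{same} exponent, and the half-derivative shift in the statement comes not from the parametrization but entirely from the conformal map.

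The heart of the argument, and the step I expect to be hardest, is the uniform conformal estimate relating the $C^{1,\b}$ geometry of $\g$ to the behaviour of $f_t$ at the branch point $W_t$. I would open up the slit by a local square-root chart centred at $\eta(t)$, turning the tip into an ordinary $C^{1,\b}$ boundary point, and apply a Kellogg--Warschawski / Dini-type boundary-regularity theorem to the unfolded configuration; the global shape of $\eta[0,t]$ enters only through comparison constants (the distance of the tip to the rest of the curve and to $\m R$), which accounts for the dependence on the global geometry in the statement. The quantitative point is that the tangent direction of $\g$ is $C^{0,\b}$ in arc length, hence $\arg f_t''(W_t)$ is $C^{0,\b}$ in $t$; the curvature-type quantity $f_t'''/f_t''$ is morally one derivative higher, but the conformal (square-root) smoothing supplies exactly half a derivative, so it is only $C^{0,\b-1/2}$.

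Integrating then gives $W\in C^{1,\b-1/2}$ for $1/2<\b<1$, and the same smoothing applied one order lower gives $W\in C^{0,\b+1/2}$ for $0<\b<1/2$; the two descriptions unify to ``$W$ of regularity $\b+1/2$'', half a derivative below the curve, and the boundary cases $\b=1/2$ and $\b=1$ produce the weak (Zygmund-type) statements. Making this half-derivative gain precise and \emph{uniform} up to the closed endpoint $t=T$ and through the tangential attachment at $t=0$ is the main obstacle; everything else is bookkeeping around the two identities above.
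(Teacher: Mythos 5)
Your outline in fact shadows the paper's actual proof quite closely --- the square-root unfolding of the tip, the quantitative Kellogg--Warschawski input, and your identity $\dot W_t = f_t'''(W_t)/f_t''(W_t)$ is exactly the paper's $\partial_{t+}W_t = 3L_s$ (Corollary~\ref{cor_Ls}) written for the inverse map --- but the two estimates that actually carry the theorem are missing, not merely deferred. First, for $0<\beta\le 1/2$ you say you would ``estimate the increments $W_{t+\delta}-W_t$ directly through the small slit'' with no mechanism. That estimate, $|W_t|\le c\,\omega(5t)\,t^{1/2}$ (Lemma~\ref{lem_vertical_curve}), is where the exponent $\beta+1/2$ comes from, and its proof requires a comparison apparatus absent from your plan: the near-verticality $|\arg\eta'(t)-\pi/2|\le 2\omega(s)$ of $\eta=\sqrt\gamma$, the explicit Kager--Nienhuis--Kadanoff line-slit solutions (Lemma~\ref{lem_straight}, Corollary~\ref{cor_Re}), and a monotonicity lemma comparing the mapping-out images of the left and right boundaries of a hull, proved via Brownian hitting probabilities (Lemma~\ref{lem_driver_comparison}). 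This root estimate is also not dispensable in the regime $\beta>1/2$: it is what gives $\dot V(0)=0$ for the corrected mapped-out curve, turning your formal coefficient matching into an actual right-derivative. Moreover, reducing increments at general $t$ to increments at the root requires showing that the mapped-out curve is again $C^{1,\beta}$ with norm \emph{uniform} in $s$, and for $\beta>1/2$ this is false without first composing with the M\"obius correction $\mu_s$ (Corollaries~\ref{cor_1_1.5} and~\ref{cor_1.5_2}) --- a step your plan does not anticipate.

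Second, your phrase ``the conformal (square-root) smoothing supplies exactly half a derivative'' is the crux and is only asserted. Kellogg--Warschawski applied to the unfolded domain gives $\varphi_s'\in C^{0,\beta}$ up to the boundary, which by itself says nothing about the existence of an angular limit of $\varphi_s''/\varphi_s'$ at the tip, let alone its size. The gain comes from a finer fact you have not identified: at the image of the tip the unfolded boundary satisfies the \emph{doubled} local exponent $|\arg\Gamma'(l)-\arg\Gamma'(0)|\le C\,l^{2\beta}$ (Lemma~\ref{lem_boundary_regularity}), and one then feeds the bound $|v(r)-v(0)|\le C(\omega(r^2)\wedge\omega(s))$ into the Poisson-kernel representation
\[ L_s = \frac{1}{\pi}\int_{-\infty}^{\infty}\frac{v(r)-v(0)}{r^2}\,dr, \]
which yields both the existence of the angular limit and the bound $|L_s|\lesssim s^{\beta-1/2}$ (Proposition~\ref{prop_varphi}, Lemma~\ref{lem_Ls_bound}); without the $2\beta$ exponent this integral does not even converge with the right rate. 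Relatedly, for $1/2<\beta<1$ the classical third derivative $f_t'''(W_t)$ in your expansion is not available a priori (only the angular limit of the pre-Schwarzian is), and your matching-of-coefficients derivation presupposes the differentiability of $W$ that the theorem asserts; the paper instead proves right-differentiability through a quantitative transformation lemma for driving functions under conformal maps (Lemma~\ref{lem_mobius}) and upgrades to $C^1$ via continuity of $s\mapsto L_s$. So the route is the correct one, but the root estimate and the $2\beta$-exponent Poisson computation --- the two genuinely hard steps --- would still have to be supplied essentially as in the paper.
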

The \emph{weak regularity} stands for a logarithmic correction term in the modulus of continuity (see Section~\ref{sec_prelim}). Examples of curves with
bottle-necks easily show that the $C^{\a}$ norm of the driving function 
cannot be bounded solely in terms of the local behavior of~$\g$. 
The sharpness of the Theorem is addressed in Section \ref{subsec_sharpness}.

It is also not hard to deduce from Theorem~\ref{thm_main} that  $C^{1,\b}$ simple loops have finite energy if $\b > 1/2$ (Proposition~\ref{prop_finite_loop}).

Let us comment on the choice of the simply connected domain $\m C \setminus \m R_+$ and subtleties in the parametrization chosen. 
Note that unlike previous results, we study the regularity of the curve on the closed interval $[0,S]$, which requires some regularity of the curve at $0$. This is the reason why we work with curves in the complement of $\m R_+$ rather than in $\m H$. 
In fact, it is trivial but worth mentioning that a simple curve $\g$ is $C^{1,\b}$ on $[0,S]$ and tangentially attached to $\m R_+$ if and only if $\g[0,S] \cup \m R_+$ is a $C^{1,\b}$ simple curve. 
On the other hand, the driving function $\tilde W$ of $\g[0,S] \cup [0,1]$, considered as a chord in the domain $\m C\setminus [1,\infty)$, starts with constant function $0$ (corresponding to the part $[0,1]$) and continues with the driving function of $\g$.
Hence it suffices to study the regularity correspondence between $\g \cup [0,1]$ and $\tilde W$ which is non-trivial only away from the starting point.
Notice that in Theorem~\ref{thm_wong}, the parametrization $t\mapsto \eta(t^2)$ is natural since  in the half-plane setting, $\eta(t)$ is of order $\sqrt t$ for small capacity $t$. 
However,  $t \mapsto t^2$ is smooth, therefore it does not affect the regularity away from $0$.
Therefore, considering regularity correspondence in the domain $\m C\setminus \m R_+$ is more natural than in $\m H$ and Theorem~\ref{thm_wong} can be stated as the implication of the regularity $\tilde W$ to the regularity of $\g \cup [0,1]$ under the usual capacity parametrization. 
Note that, according to the above conventions, in our theorem the smoothness assumption of $\gamma$ is with respect to
the arclength-parametrization, while the stated regularity of $W$ refers to the capacity parametrization.
Since the arclength parametrization has the highest degree of regularity among all parametrizations  that have speed bounded away from zero
(to see this, note that for any parametrization, the arclength function and hence its inverse has the
same regularity as the curve), it follows from Theorem~\ref{thm_main} and Theorem~\ref{thm_wong} that both the arclength and capacity parametrizations of the curve have the same degree of regularity which is also $1/2$ higher than the driving function.

Returning to the strategy of the proof of Theorem \ref{thm_main_2}:
We use concatenated circular arcs to replace a part of the loop and deduce that the energy rooted at two ends of each circular arc are the same if both of them are finite.  
We use Theorem~\ref{thm_main} to show that loops appearing in the surgery are regular enough to have finite energy.
The proof of the general case uses an approximation by minimal energy loops that are of independent interest (Proposition~\ref{prop_optimal_loop}). 
Our proof uses the reversibility of Loewner energy, sometimes implicitly so that we never specify the orientation of loops/arcs and alter freely the orientation.
The reversibility was proved using an interpretation via $\SLE_{0+}$ in \cite{wang2016}, therefore
 our proof of Theorem~\ref{thm_main_2} is not purely deterministic.
 
However, Theorem~\ref{thm_main_2} suggests that a chord in a simply connected domain is better viewed as a part of a loop after conformally mapping the domain to the complement of a circular arc in the sphere as described above, and with regards to the energy, the boundary of the domain plays the same role as the chord.
It also suggests that loop energy has to be a more fundamental quantity. 
Indeed, in a later work \cite{W2} of very different flavor, the second author derived equivalent descriptions of the loop energy connecting to Weil-Petersson class of universal Teichm\"uller space.

The paper is structured as follows: Section~\ref{sec_application} deals with the loop Loewner energy. 
In Section~\ref{sec_chordal_energy}, we briefly recall the results on the chordal Loewner energy that we use, and give the proof of Theorem~\ref{thm_main_2} in Section~\ref{sec_smooth_invariance} and Section~\ref{sec_general_invariance} assuming Theorem~\ref{thm_main}. 
We prove Theorem~\ref{thm_main} in Section~\ref{sec_regularity}, where Section~\ref{sec_reg_h} studies the regularity when a first part $\g[0,s]$ of the curve is mapped-out by the function $h_s$ (Figure~\ref{figure1}). 
It reduces the study to the regularity of the driving function at $0$, details are in Section~\ref{sec_driving_init}. 
We complete the proof in Section~\ref{sec_proof_1}. 
Some comments and possible further development are discussed in the last section.

\section{The Loop Loewner energy}\label{sec_application}

\subsection{Chordal Loewner energy} \label{sec_chordal_energy}
Let $D$ be a simply connected domain in $\m C$, and $a,b$ be two boundary points of $D$.
By a simple curve in $(D,a,b)$ we mean the image of a continuous injective map $\g$ from $[0,1]$ to $\overline{D}$, such that $\g(0 ) = a$ and $\g(0,1) \subset D$. If $\g (1) \in \partial D,$ then we also require that $\g(1) = b$.  
Two curves are considered as the same if they differ only by an increasing reparametrization.

Let us briefly recall the chordal Loewner transformation of a continuous simple curve $\eta$ in $(\m H, 0, \infty)$. It is associated to its driving function $W$ in the following way:
\begin{enumerate}
\item We parameterize the curve in such a way that  the conformal map $g_t$ from $\HH \setminus \eta [0,t]$ onto $\HH$ with $g_t (z)=  z + o(1)$ as $z \to \infty$ satisfies $g_t ( z)= z + 2t / z + o( 1/z)$ (which is the same as saying that the \emph{half-plane capacity} of $\eta[0,t]$ is $2t$, or that $\eta$ is \emph{capacity-parametrized}.)
It is easy to see that it is always possible to reparameterize a continuous curve in such a way. 
\item One can extend $g_t$ continuously to the boundary point $\eta(t)$ and defines $W_t$ to be $g_t ( \eta(t))$.
\end{enumerate}
It is not hard to see that the function $W$ is continuous and $W_0 = 0$. The map $g_t$ is referred to as the \emph{mapping-out function} of $\eta [0,t]$, and the family $(g_t)_{t\geq 0}$ as the \emph{Loewner flow} of~$\eta$. 
The function $W$ fully characterizes the curve through Loewner's differential equation and $W$ is called the \emph{driving function} of $\eta$. 
In fact, consider for every $z \in \m H$ the Loewner differential equation (LDE) in the upper half-plane:
\begin{equation*} 
  \partial_t g_t(z) = 2/(g_t(z)- W_t), 
\end{equation*}
with the initial condition $g_0(z) = z$. The increasing family of the closure of $K_t = \{z \in \m H, \tau(z) \leq t\}$ coincides with the family of $\eta[0,t]$, where $\tau(z)$ is the maximum survival time of the solution. 
And we have also that $g_t : \m H \setminus K_t \to \m H$ is the mapping-out function of $\eta[0,t]$.

\begin{definition}[Chordal Loewner energy]
The \emph{chordal Loewner energy of $\g$ in $(D,a,b)$} is defined as
\[ I_{D,a,b} (\g ) := \frac{1}{2} \int_0^T \dot{\l}(t)^2 d t\]
if $W$ is absolutely continuous,
where  $\l$ is the driving function of the image curve $ \phi (\g)$  under a conformal map $\phi:D \to \m{H}$ with $\phi(a) = 0$ and $\phi(b) = \infty$, and $T$ is the half-plane capacity of $\phi(\g)$ seen from $\infty$. 
The energy is defined to be $\infty$ if $W$ is not absolutely continuous.
\end{definition}
Notice that $T =\infty$ if and only if $\g(1) = b$. 
The choice of the uniformizing map $\phi$ in the above definition is not unique, but they all give the same energy. 
The energy is actually well defined for any chordal Loewner chain, which is the increasing family $(K_t)_{t\geq 0}$ generated by continuous driving function $W$ as above. 
However, it is not hard to see that if the energy is finite and the Loewner chain has infinite capacity, then it is actually a simple curve connecting $a$ to $b$ (see e.g. \cite[Prop.~2.1]{wang2016}). 
Hence we restrict ourselves to simple curves.   
It is an immediate consequence of our absolute continuity assumption that $I_{D,a,b}(\g) = 0$ if and only if $\g$ is contained in the hyperbolic geodesic in $D$ between $a$ and $b$. 
We list some properties of the chordal Loewner energy:
\begin{itemize}
  \item \emph{Conformal invariance.} 
This follows from the invariance of the Dirichlet energy under Brownian scaling, $I_{\m H, 0,\infty} (\g) = I_{\m H, 0,\infty} (a\g)$ for all $a >0$, and allows for the above definition to be independent of the uniformizing map.
\item \emph{Additivity.} Namely
  $$I_{D,a,b} (\g) = I_{D,a,b} (\g [0,\d]) + I_{D \setminus \g[0,\d], \g(\d), b} (\g[\d,1]),$$
  where $0< \d <1$ and we consider $\g[0,\d]$ as a simple curve in $(D,a,b)$ after increasing reparametrization by $[0,1]$, and $\g[\d,1]$ as a simple curve in $(D,\g(\d), b)$ in the same way. We will not explicitly mention such reparametrizations in the sequel, as there is no danger of confusion.
  \item \emph{Regular curves have finite energy.} If $\b > 1/2$, $S< \infty$ and $\g[0,S]$ is an arclength-parametrized $C^{1,\b}$ curve tangentially attached to $\m R_+$, then Theorem~\ref{thm_main} implies that the driving function of $\sqrt \g$ is in $C^{1,\b-1/2}$ on $[0,T]$. Since the capacity $T < \infty$, $\g$ has finite energy in $(\m C \setminus \m R_+, 0, \infty)$. 
  \item \emph{Finite energy implies quasiconformality.} If $\g$ in $(\m H, 0, \infty)$ has finite energy, then it is the image of $i[0,1]$ if $T <\infty$ (or $i \m R_+$ if $T = \infty$) under a quasiconformal homeomorphism $\m H \to \m H$ fixing $0$ and $\infty$.  We say that these curves are \emph{quasiconformal curves},
  see \cite[Prop.~2.1]{wang2016}. It follows essentially from the Lip$(1/2)$ property of the finite energy driving functions \cite{marshall2005loewner} \cite{lind2005sharp}.
  \item \emph{Finite energy curves are rectifiable.} This is proven in \cite[Thm.~2.iv]{friz2015}.
  \item \emph{Corners have infinite energy.} The reason is that finite energy curves in $(\m H, 0,\infty)$ have a vertical tangent at $0$ (see \cite[Prop.~3.1]{wang2016}), while a corner with an opening angle different from $\pi$ generates a curve with non-vertical tangent at $0$ when we map out the portion of the curve up to the corner. More generally, it is not hard to see that finite energy curves are asymptotically conformal \cite[Ch.~11.2]{pomm1992boundary}, using the fact that small energy implies small quasiconformal constant.
  \item \emph{Reversibility.} The chordal Loewner energy is defined in a very directional way, but using an interpretation via $\SLE_{0+}$ and the reversibility of SLE \cite{zhan2008}, the second author proved that the chordal Loewner energy is in fact reversible \cite[Thm.~1.1]{wang2016}:

\begin{theorem}
    For any simple curve $\g \subset D$ connecting $a$ and $b$, 
    \[I_{D,a,b} (\g ) = I_{D,b,a} (\g).\]
    \end{theorem}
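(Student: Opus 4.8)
The plan is to realize the chordal Loewner energy as the large deviation rate function of the Schramm--Loewner evolution $\SLE_\k$ in the limit $\k \to 0^+$, and then to transfer the reversibility of $\SLE$ (for which, as an unparametrized curve, the direction does not matter in law) to the rate function itself. Recall that $\SLE_\k$ in $(\m H, 0, \infty)$ is the Loewner chain driven by $W_t = \sqrt{\k}\, B_t$, where $B$ is a standard one-dimensional Brownian motion. For an absolutely continuous driving function, $I_{\m H,0,\infty}(\g) = \frac12 \int_0^\infty \dot W(t)^2 \dd t$ is exactly the Cameron--Martin/Dirichlet functional appearing in Schilder's theorem: as $\k \to 0^+$, the rescaled driving process $\sqrt{\k}\, B$ satisfies a large deviation principle on path space, with speed $1/\k$ and good rate function $W \mapsto \frac12 \int \dot W(t)^2 \dd t$.

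First I would push this large deviation principle forward from driving functions to curves. By the contraction principle, provided the Loewner transform $W \mapsto \g$ is continuous from the path space (with the topology in which Schilder's theorem holds) to a suitable space of curves (with a metric coming from the Hausdorff or capacity distance on the generated hulls), the $\SLE_\k$ curves satisfy a large deviation principle whose rate function is obtained by minimizing $\frac12 \int \dot W^2 \dd t$ over all driving functions producing a given curve. Since a simple curve has a unique driving function, this infimum is attained and equals $I_{\m H,0,\infty}(\g)$. Thus the chordal Loewner energy is precisely the good rate function governing $\SLE_{\k \to 0^+}$, and in particular it is intrinsic to the \emph{law} of the random curve rather than to its parametrization.

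Next I would invoke the reversibility of $\SLE$ due to Zhan \cite{zhan2008}: for $\k \leq 4$, the unparametrized, unoriented $\SLE_\k$ curve from $a$ to $b$ in $D$ has the same law as the one from $b$ to $a$. A large deviation rate function is uniquely determined by the underlying family of laws, so this common family determines a single functional on the space of curves viewed as sets; by the previous step, its value at a fixed simple curve $\g$ equals the chordal Loewner energy of $\g$ computed with \emph{either} orientation. This yields $I_{D,a,b}(\g) = I_{D,b,a}(\g)$, after transporting a general $(D,a,b)$ to $(\m H, 0, \infty)$ by the conformal invariance already recorded among the listed properties.

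The main obstacle is the continuity of the Loewner transform needed for the contraction principle: the map from driving functions to curves (or hulls) is notoriously discontinuous in naive topologies, so one must work in carefully chosen function and curve spaces, controlling in particular the behavior at the tip and in the infinite-capacity regime, and verifying goodness (compactness of sublevel sets) of the rate function together with the exponential tightness required to upgrade a weak large deviation principle to a full one. Establishing this continuity and tightness is the technical heart of the argument; once it is in place, reversibility of the rate function follows formally from the reversibility of $\SLE$, and the proof is essentially the one carried out by the second author in \cite{wang2016}.
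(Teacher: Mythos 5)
You have correctly identified the route the paper points to: this theorem is not proved in the present paper at all --- it is quoted from \cite[Thm.~1.1]{wang2016}, with the remark that the proof there uses ``an interpretation via $\SLE_{0+}$ and the reversibility of SLE \cite{zhan2008}''. So your high-level plan (realize $I$ as a $\kappa \to 0^+$ large deviation rate function and transfer Zhan's reversibility to it) is the intended one in spirit.

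The genuine gap is exactly the step you label the ``technical heart'' and then leave unresolved: the contraction principle does not apply, because the Loewner transform $W \mapsto \gamma$ is \emph{not} continuous from drivers with the uniform topology to curves or hulls in any of the topologies you name. Uniform convergence of driving functions does not imply convergence of traces --- this is the collision/spiral phenomenon studied in \cite{lind2010collisions}; continuity-type statements are available only on sets of drivers whose ${\rm Lip}(1/2)$ norm lies below the threshold $4$ of \cite{marshall2005loewner} and \cite{lind2005sharp}, and $\sqrt{\kappa}\,B$ almost surely does not take values in such a set, so your conditional ``provided the Loewner transform is continuous'' is vacuous as stated. An actual LDP for $\SLE_{0+}$ at the level of curves is a substantial theorem in its own right, established only in later work (Peltola--Wang in the Hausdorff topology, Guskov in the uniform topology), by methods that circumvent the contraction principle. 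Deferring the fix to \cite{wang2016} does not repair this, because that paper does not prove or invoke a path- or curve-space LDP: it fixes a finite-energy curve and extracts the $\kappa \to 0^+$ limit from explicit $\SLE_\kappa$ computations (Radon--Nikodym derivatives of Girsanov/restriction type and one-point estimates), precisely so as to avoid the discontinuity of the Loewner map, and only then applies \cite{zhan2008}. Two further points would need attention even if an LDP were granted: the uniqueness-of-rate-function argument requires both LDPs to be formulated on a single topological space of unoriented, unparametrized curves, so that Zhan's theorem really is an identity of the two families of laws; and for chords ($T=\infty$, $\gamma(1)=b$) Schilder's theorem on compact time intervals must be upgraded to the infinite-capacity horizon, where the approach to the endpoint $b$ is part of the topology. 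None of these are cosmetic; the contraction step in particular is where a proof along your lines must do real work.
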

    Thus when there is no ambiguity of which boundary points we are dealing with, we simplify the notation to $I_D(\g)$, and view $\g$ as an unoriented curve. 
\end{itemize}

For more background on quasiconformal maps, readers may consult \cite{astala2008elliptic}, \cite{lehto2012univalent} and \cite{lehto1973quasiconformal},
and \cite{lawler2001values}, \cite{lawler2008conformally}, \cite{werner2004st_flour} for background on SLE (introduced by Oded Schramm \cite{Schramm2000}). 
The following corollary is an immediate consequence of the reversibility of Loewner energy, and its counterpart in the Schramm Loewner Evolution setting is known as the commutation relation \cite{Dubedat2007commutation}. The second equality below can also be proved without using reversibility, from explicit computation of the change of driving function, see Proposition~\ref{prop_comm}.

\begin{corollary}[Two-slit Loewner energy] \label{cor_two_slits}
If $\g$ is a simple curve in $(D,a,b)$ and $\eta$ is a simple curve in $(D,b,a)$ such that $\g[0,1] \cap \eta[0,1] = \emptyset$, 
let $\xi$ be the hyperbolic geodesic in $D\setminus \g \cup \eta$ connecting $\g(1)$ and $\eta(1)$. 
We then have
\[I_{D,a,b}(\g \cup \xi \cup \eta) = I_{D,a,b}(\g) + I_{D\setminus \g, b, \g(1)}(\eta) = I_{D,b,a}(\eta) + I_{D\setminus \eta, a, \eta(1)}(\g),\]
and write this value as $I_D(\g \cup \eta)$ without ambiguity. 
\end{corollary}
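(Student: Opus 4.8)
The plan is to obtain the first equality by peeling off segments with the additivity property, using reversibility to reorient at the key moment, and then to get the second equality for free by symmetry.

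First I would observe that $\g \cup \xi \cup \eta$ is a single simple curve in $(D,a,b)$: it runs from $a$ along $\g$ to the tip $\g(1)$, then along the geodesic $\xi$ to $\eta(1)$, and finally along $\eta$ traversed backwards to $b$. Splitting this curve at the interior point $\g(1)$ and invoking additivity gives
\[ I_{D,a,b}(\g \cup \xi \cup \eta) = I_{D,a,b}(\g) + I_{D\setminus\g,\, \g(1),\, b}(\xi \cup \eta). \]
To identify the second term, I would use reversibility to write $I_{D\setminus\g,\g(1),b}(\xi\cup\eta) = I_{D\setminus\g,b,\g(1)}(\xi\cup\eta)$, so that the curve is now traversed from $b$ first along $\eta$ to $\eta(1)$ and then along $\xi$ to $\g(1)$. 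Splitting this at $\eta(1)$ and applying additivity once more yields
\[ I_{D\setminus\g,b,\g(1)}(\xi\cup\eta) = I_{D\setminus\g,b,\g(1)}(\eta) + I_{D\setminus(\g\cup\eta),\,\eta(1),\,\g(1)}(\xi). \]
By construction $\xi$ is exactly the hyperbolic geodesic in $D\setminus(\g\cup\eta)$ joining $\eta(1)$ and $\g(1)$, so by the characterization of the geodesic as the unique zero-energy curve the last term vanishes. Combining the three displays proves the first equality.

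The whole point of inserting reversibility is to make $\xi$ the \emph{terminal} segment of the concatenation: only then does the successively cut domain become $D\setminus(\g\cup\eta)$ with endpoints $\eta(1),\g(1)$, for which $\xi$ is by definition the geodesic and hence carries no energy. This is the one genuinely non-formal step; without the reorientation, peeling off $\xi$ first would leave the term $I_{D\setminus\g,\g(1),b}(\xi)$, which need not vanish because $\xi$ is not the geodesic of $D\setminus\g$ between $\g(1)$ and $b$. Along the way I would also record that each cut domain, namely $D\setminus\g$ and $D\setminus(\g\cup\eta)$, remains simply connected, since removing disjoint slits emanating from two boundary points preserves simple connectivity and keeps $\g(1),\eta(1)$ as boundary points; this guarantees that every energy above is well defined.

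For the second equality I would exploit the symmetry of the configuration under exchanging $(\g,a)$ with $(\eta,b)$. By reversibility applied to the whole curve, $I_{D,a,b}(\g\cup\xi\cup\eta) = I_{D,b,a}(\eta\cup\xi\cup\g)$, and running the identical argument with the roles of $\g$ and $\eta$ interchanged gives $I_{D,b,a}(\eta\cup\xi\cup\g) = I_{D,b,a}(\eta) + I_{D\setminus\eta,\,a,\,\eta(1)}(\g)$. Since both the middle and right-hand expressions of the corollary now equal the common value $I_{D,a,b}(\g\cup\xi\cup\eta)$, they are equal to one another, which is the second equality and justifies writing $I_D(\g\cup\eta)$ without ambiguity. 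As remarked before the statement, this second equality is a commutation relation that can alternatively be obtained by the direct computation of Proposition~\ref{prop_comm}, bypassing reversibility altogether.
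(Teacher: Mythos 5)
Your proof is correct and is precisely the argument the paper intends: the corollary is stated there as an immediate consequence of additivity and reversibility, and your write-up simply makes that explicit --- split at $\g(1)$, reverse in $D\setminus\g$ so that $\xi$ becomes the terminal segment and drops out as the zero-energy hyperbolic geodesic of $D\setminus(\g\cup\eta)$, then symmetrize. Your closing remark that the second equality also follows from the explicit computation in Proposition~\ref{prop_comm} matches the paper's own observation, so nothing is missing.
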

Combined with the additivity of the energy, the energy of two non-intersecting slits $\g[0,1]$ and $\eta[0,1]$ can be computed by summing up the energies of different pieces that are consecutively attached to previous ones, for instance
\begin{align*}
  &I_{D,a,b}(\g \cup \eta) \\
=& I_{D,a,b}(\g[0,1/3]) + I_{D\setminus \g[0,1/3], b, \g(1/3)} (\eta[0,1]) + I_{D\setminus \g[0,1/3] \cup \eta[0,1], \g(1/3), \eta(1)}(\g[1/3,1]).
\end{align*}

It is not surprising that the Loewner energy strongly depends on the domain. But if we fix the curve, the change of domain entails a change of energy in an explicit way: 
For subsets $A$ and $B$ of a domain $D$, denote
$m^l(D; A, B)$  the measure of Brownian loops (see \cite{lawler2003conformal}) in $D$ intersecting both $A$ and $B$. 
Write 
$H(D;x,y)$ 
for the Poisson excursion kernel relative to local coordinates in the neighborhoods of $x$ and $y$ as defined in \cite[Sec.~3.2]{Dubedat2007commutation},  (see also \cite[Sec.~2.1]{lawler2009part}),
namely the normal derivative of the Green's function $G_D$ using local coordinates.
Note that this number depends on the local coordinates, but the quotients on excursion kernels considered below don't depend on the local coordinates 
if the same neighborhood and the same local analytic coordinates are used for the same boundary point (they all appear once on the denominator and numerator and the excursion kernel changes like a $1$-form at the boundary points when local coordinates change).

Let $H_K$ be a subdomain of $\m{H}$ and assume that they coincide in a neighborhood of $0$ and $\infty$. Let $\g$ be a simple curve in $H_K$.

\begin{proposition}[Conformal restriction {\cite[Prop.~4.2]{wang2016}}]\label{prop_energy_change_finite}
The energies of $\g$ in $(\m{H},0 ,\infty)$ and in $(H_K, 0 , \infty)$ differ by 
$$
   I_{H_K, 0 , \infty}(\g) - I_{\m H, 0, \infty}(\g) = 3\ln\left( \frac{H(H_K; 0, \infty) H(\m H \setminus \g; \g(1), \infty)}{H(\m H; 0 ,\infty) H(H_K\setminus \g; \g(1), \infty)}\right) + 12 m^l(\m H; \g, K)
$$
if $\g(1) \neq \infty$. Otherwise,
$$
   I_{H_K, 0 , \infty}(\g) - I_{\m H, 0, \infty}(\g) = 3\ln\left( \frac{H(H_K; 0, \infty)}{H(\m H; 0 ,\infty) }\right) + 12 m^l(\m H; \g, K).
$$

\end{proposition}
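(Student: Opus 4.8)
The plan is to deduce this identity from the conformal restriction property of $\SLE_\k$ by letting $\k \to 0^+$, using the interpretation of the Loewner energy as the $\SLE_{0+}$ rate function (the same circle of ideas that supplies the reversibility used elsewhere in the paper). Write $\mu^\k_{\m H}$ and $\mu^\k_{H_K}$ for the laws of chordal $\SLE_\k$ from $0$ to $\infty$ in $\m H$ and in $H_K$, the latter viewed as a measure on curves in $\m H$ supported on $\{\g \subset H_K\}$. Set $h_\k = (6-\k)/(2\k)$, let $\mathbf c_\k = (3\k-8)(6-\k)/(2\k)$ be the central charge, and abbreviate by $R$ the excursion-kernel ratio $\tfrac{H(H_K;0,\infty)\,H(\m H\setminus\g;\g(1),\infty)}{H(\m H;0,\infty)\,H(H_K\setminus\g;\g(1),\infty)}$ appearing in the statement.

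First I would invoke Lawler's conformal restriction formula for $\SLE_\k$: along a curve $\g$ staying in $H_K$, the Radon--Nikodym derivative is $\frac{d\mu^\k_{H_K}}{d\mu^\k_{\m H}}(\g) = R^{-h_\k}\exp\!\big(\tfrac{\mathbf c_\k}{2}\,m^l(\m H;\g,K)\big)$. The factor $R^{-h_\k}$ arises as the ratio of the derivative terms $|\Phi_t'(W_t)|^{h_\k}$ of the restriction martingale $M_t = |\Phi_t'(W_t)|^{h_\k}\exp(\tfrac{\mathbf c_\k}{2}\Lambda_t)$ (with $\Lambda_t$ the accumulated Brownian-loop term and $\Phi_t$ the uniformizing maps of the successive slit domains), evaluated at $t=\infty$ and $t=0$: rewriting each $|\Phi'|$ as an excursion-kernel quotient through the transformation rule that $H(\cdot;x,y)$ changes as a boundary $1$-form in each of $x,y$, and using that $\Phi$ fixes $0,\infty$ with $\Phi'(\infty)=1$ since the domains agree near $0$ and $\infty$, one finds $M_0 \sim (H(H_K;0,\infty)/H(\m H;0,\infty))^{h_\k}$ and $M_\infty \sim (H(H_K\setminus\g;\g(1),\infty)/H(\m H\setminus\g;\g(1),\infty))^{h_\k}$, whose quotient is exactly $R^{-h_\k}$. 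This is the step where the grouping of the four kernels and the role of the tip $\g(1)$ get pinned down.

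Second, I would use the $\SLE_{0+}$ description of the energy. Since $I_{H_K,0,\infty}(\g) = I_{\m H,0,\infty}(\Phi(\g))$ by conformal invariance, both energies are $\m H$-energies, and the large-deviations relation $I_D(\g) = -\lim_{\k\to0}\k\log(\text{density of }\mu^\k_D\text{ near }\g)$ yields $I_{H_K}(\g) - I_{\m H}(\g) = -\lim_{\k\to0}\k\log\frac{d\mu^\k_{H_K}}{d\mu^\k_{\m H}}(\g)$. Substituting the restriction formula reduces everything to the two scalar limits $\k h_\k \to 3$ and $\tfrac12\k\mathbf c_\k \to -12$, so that $-\k\log R^{-h_\k} = \k h_\k\log R \to 3\log R$ and $-\tfrac{\k \mathbf c_\k}{2}m^l \to 12\,m^l$, producing exactly $3\log R + 12\,m^l(\m H;\g,K)$. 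The case $\g(1)=\infty$ is the degenerate limit in which the two tip-kernels $H(\cdot\setminus\g;\g(1),\infty)$ merge into the marked point $\infty$ and drop out, leaving the initial ratio and the loop term.

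The hard part will be the rigorous passage to the limit in the second step: one must justify evaluating the restriction density at a fixed finite-energy curve $\g$ and interchanging this with $\k\to0$, i.e. control the $\k$-dependence of the boundary term and of the loop-measure term uniformly, which is precisely the content of the $\SLE_{0+}$ interpretation of the energy. A reassuring check, and an alternative route avoiding probabilistic input, is a purely deterministic computation: relating the driving functions of $\g$ in $\m H$ and in $H_K$ by conjugating the two Loewner flows through $\Phi$, the difference $\tfrac12\int(\dot{\tilde W}^2 - \dot W^2)\dd t$ splits into an exact boundary contribution integrating to $3\log R$ and a remaining integral that one identifies with $12\,m^l(\m H;\g,K)$ via Lawler's deterministic integral formula for the Brownian loop measure; agreement of the two computations would also double-check the constants $3$ and $12$.
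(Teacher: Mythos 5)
The paper does not actually prove this proposition: it is imported with the citation \cite[Prop.~4.2]{wang2016}, and the proof there is the \emph{deterministic} computation that you relegate to your final ``reassuring check'' paragraph. Concretely: conjugating the two Loewner flows through the uniformizing map $\Phi: H_K \to \m H$ via $h_t=\tilde g_{a(t)}\circ\Phi\circ g_t^{-1}$, one gets $\frac{d}{dt}\bigl(h_t(W_t)\bigr)=h_t'(W_t)\dot W_t-3h_t''(W_t)$ and $a'(t)=h_t'(W_t)^2$ (compare Lemma~\ref{lem_mobius} and equation \eqref{eq_derivative} in this paper, and the analogous formula in Proposition~\ref{prop_comm}), whence after the time change
\[
I_{H_K,0,\infty}(\g)-I_{\m H,0,\infty}(\g)=\frac12\int_0^T\Bigl(\dot W_t-3\tfrac{h_t''(W_t)}{h_t'(W_t)}\Bigr)^2-\dot W_t^2\,\dd t .
\]
The cross term integrates exactly, since $\frac{d}{dt}\log h_t'(W_t)=\frac{h''}{h'}\dot W+\frac{(h'')^2}{2(h')^2}-\frac43\frac{h'''}{h'}$; this leaves the boundary contribution $-3\bigl[\log h_t'(W_t)\bigr]_0^T$, which the $1$-form transformation rule for $H$ identifies with $3\ln R$ (this is where your grouping of the four kernels is confirmed, with $h_t'(W_t)\to1$ as $t \to \infty$ handling the case $\g(1)=\infty$), plus the bulk term $6(h''/h')^2-4h'''/h'=-4\,Sh_t(W_t)$, whose integral equals $12\,m^l(\m H;\g,K)$ by the Lawler--Schramm--Werner identity $\partial_t\, m^l(\m H;\g[0,t],K)=-\tfrac13 Sh_t(W_t)$. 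So your second route is essentially the proof of record, and your constants all check out ($\k h_\k\to3$, $-\k \mathbf c_\k/2\to12$ are consistent with $-4S=12\cdot(-\tfrac13 S)$).

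Your primary route, by contrast, has a genuine gap exactly where you flag it, and should not be presented as the proof. The relation $I_D(\g)=-\lim_{\k\to0}\k\log(\text{density of }\mu^\k_D\text{ near }\g)$ is not a theorem you can invoke: SLE laws have no densities, and the $\SLE_{0+}$ interpretation in \cite{wang2016} is a large-deviations statement for the \emph{driving functions} (Schilder's theorem for $\sqrt\k B$ on finite capacity intervals), not for the curves. To run your argument rigorously you would need (i) a large-deviation principle for the curves themselves, in a topology in which the functional $\g\mapsto 3\ln R+12\,m^l(\m H;\g,K)$ is continuous, and (ii) a justification for evaluating the Radon--Nikodym derivative over shrinking neighborhoods of the fixed finite-energy curve $\g$ uniformly as $\k\to0$; neither is supplied, and such curve-space LDPs for $\SLE_{0+}$ were not available when this paper was written. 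The matching of the scalar limits $\k h_\k\to 3$ and $\k\mathbf c_\k\to-24$ is a consistency check on the constants, not a proof. Since the deterministic computation needs nothing beyond the conjugated Loewner flow and the (deterministic) loop-measure identity, it is both the actual argument behind the citation and the rigorous version of your plan; I recommend promoting it from ``check'' to proof.
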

By the conformal invariance of both sides of the above equality, we easily deduce the change of Loewner energy in two general domains which coincide in a neighborhood of the marked boundary points.
\begin{corollary} \label{cor_energy_change_restriction}
   Let $(D,a,b)$ and $(D',a,b)$ be two domains coinciding in a neighborhood of $a$ and $b$, and $\g$ a simple curve in both $(D,a,b)$ and $(D',a,b)$. Then we have if $\g(1) \neq b$,
   \begin{align*}
   I_{D', a, b}(\g) - I_{D, a, b}(\g) = & 3\ln\left( \frac{H(D'; a, b) H(D\setminus \g; \g(1), b)}{H(D; a ,b) H(D'\setminus \g; \g(1), b)}\right) \\
   & 
   +  12 m^l(D; \g, D\setminus D') - 12 m^l(D'; \g, D'\setminus D),
   \end{align*}
   and if $\g(1) = b$,
   \begin{align*} 
   I_{D', a, b}(\g) - I_{D, a, b}(\g)  = 3\ln\left( \frac{H(D'; a, b) }{H(D; a ,b)}\right)  +  12 m^l(D; \g, D\setminus D') - 12 m^l(D'; \g, D'\setminus D).
  \end{align*}
\end{corollary}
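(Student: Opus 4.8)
The plan is to deduce Corollary~\ref{cor_energy_change_restriction} from Proposition~\ref{prop_energy_change_finite} by conformal invariance together with an interposed common subdomain. The three ingredients entering the formula transform in a compatible way: the Loewner energy is conformally invariant, the Brownian loop measure $m^l$ is conformally invariant, and although the excursion kernel $H$ depends on the choice of local coordinates, every quotient of excursion kernels appearing here has each boundary point occurring equally often in its numerator and its denominator, so by the $1$-form transformation rule recalled above it is coordinate- and conformally invariant. Hence Proposition~\ref{prop_energy_change_finite} transports verbatim from $(\m H,0,\infty)$ to any nested pair $(\Omega, D)$ with $\Omega\subset D$ a simply connected domain coinciding with $D$ in neighborhoods of $a$ and $b$: mapping $D$ onto $\m H$ with $a\mapsto 0$, $b\mapsto\infty$ sends $\Omega$ to a subdomain $H_K\subset\m H$ coinciding with $\m H$ near $0$ and $\infty$, and pulling the identity back yields, when $\g(1)\neq b$,
\begin{equation*}
I_{\Omega, a, b}(\g) - I_{D, a, b}(\g) = 3\ln\left(\frac{H(\Omega; a, b)\, H(D\setminus\g; \g(1), b)}{H(D; a, b)\, H(\Omega\setminus\g; \g(1), b)}\right) + 12\, m^l(D; \g, D\setminus\Omega),
\end{equation*}
and the analogous identity keeping only the $H(\,\cdot\,;a,b)$ factors when $\g(1)=b$.

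Next I would take $\Omega$ to be the connected component of $D\cap D'$ containing the curve $\g$. As $D$ and $D'$ coincide near $a$ and $b$, so does $\Omega$, and $\Omega\subset D$, $\Omega\subset D'$; one has to check that $\Omega$ is a simply connected domain having $a$ and $b$ on its boundary, so that $I_{\Omega,a,b}(\g)$ is defined. Applying the transported identity once to $(\Omega,D)$ and once to $(\Omega,D')$ and subtracting, the factors $H(\Omega;a,b)$ and $H(\Omega\setminus\g;\g(1),b)$ cancel inside the logarithm, leaving precisely
\begin{equation*}
3\ln\left(\frac{H(D'; a, b)\, H(D\setminus\g; \g(1), b)}{H(D; a ,b)\, H(D'\setminus\g; \g(1), b)}\right) + 12\, m^l(D; \g, D\setminus\Omega) - 12\, m^l(D'; \g, D'\setminus\Omega),
\end{equation*}
which already matches the claimed logarithmic term.

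It remains to replace $\Omega$ by $D'$ (resp. $D$) in the two loop measures, and this is the one genuine point. I claim $m^l(D;\g, D\setminus\Omega) = m^l(D;\g, D\setminus D')$. Since $\Omega\subset D'$ we have $D\setminus D'\subset D\setminus\Omega$, so any loop in $D$ meeting $\g$ and $D\setminus D'$ also meets $D\setminus\Omega$. Conversely, write $D\setminus\Omega = (D\setminus D')\cup\bigl((D\cap D')\setminus\Omega\bigr)$; a connected loop in $D$ meeting $\g\subset\Omega$ and meeting $D\setminus\Omega$ that avoided $D\setminus D'$ would stay inside $D\cap D'$ while touching two distinct components of this open set, which is impossible, so it must meet $D\setminus D'$. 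Thus the two loop families coincide, and symmetrically $m^l(D';\g, D'\setminus\Omega)=m^l(D';\g, D'\setminus D)$; substituting gives the stated formula, and the case $\g(1)=b$ follows identically from the second formula of Proposition~\ref{prop_energy_change_finite}. The step I expect to require the most care is precisely the admissibility of the interposed domain $\Omega$ (its simple connectivity and the accessibility of the marked points $a,b$ from it) together with the bookkeeping of which local coordinates are used in each excursion-kernel factor under the successive conformal transports; the cancellation itself is then routine.
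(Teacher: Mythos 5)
Your proposal is correct and is essentially the paper's own argument: the paper dispatches the corollary in one line by invoking the conformal invariance of both sides of Proposition~\ref{prop_energy_change_finite}, and spelling that out yields exactly your transport of the proposition to the nested pairs $(\Omega,D)$ and $(\Omega,D')$ with $\Omega$ the component of $D\cap D'$ containing $\g$, followed by the cancellation of the $\Omega$-factors and your (correct) identification of the loop families $\{$loops meeting $\g$ and $D\setminus\Omega\}=\{$loops meeting $\g$ and $D\setminus D'\}$. The one check you defer is standard and easily filled: $\Chat\setminus D$ and $\Chat\setminus D'$ are connected closed sets both containing $a$, so $\Chat\setminus(D\cap D')$ is connected and every component of its complement, in particular $\Omega$, is simply connected, with $a,b\in\partial\Omega$ since $D$, $D'$, and hence $D\cap D'$ coincide near $a$ and $b$.
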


From a similar calculation, we also get the difference of the energy of $\g$ in a slit domain $D \setminus \eta$, where $\eta$ grows from the target point of $\g$.

\begin{proposition}[Commutation relation {\cite[Lem.~4.3]{wang2016}}] \label{prop_comm}
Let $\g$ be a simple curve in $(D,a,b)$, and $\eta$ a simple curve in $(D,b,a)$. If $\g \cap \eta = \emptyset$, then
     \begin{align*}
    I_{D \setminus \eta, a, \eta(1)}(\g) - I_{D,a,b}(\g)&
 =\frac{1}{2} \int_0^T \left[\dot{W}_t + 6 U^t\right]^2 - \dot{W}_t^2 \dd t\\ 
 &= 12 m^l(D;\g, \eta) + 3\ln
    \left(
    \frac
    {H(D \setminus \g; \g(1), b) H(D \setminus \eta; a,\eta(1))}
    {H(D \setminus \g \cup \eta; \g(1), \eta(1)) H(D;a,b)}
    \right), \\
    &= I_{D \setminus \g, b, \g(1)}(\eta) - I_{D,b,a}(\eta),
    \end{align*}
    where $T$ is the capacity of $\tilde \g := \varphi (\g)$ seen from $\infty$, and $\varphi$ uniformizes $(D,a,b)$ to $(\m H,0,\infty)$. 
    Let $g_t$ be the mapping-out function of the curve $\tilde \g[0,t]$ parametrized by capacity. The image $\tilde \eta^t := g_t (\varphi(\eta))$ is a slit attached to $\infty$ in $\m H$, and $U^t \in \m R$ is the image of the tip of $1/\tilde{\eta}^t$ under its mapping-out function. 
    \end{proposition}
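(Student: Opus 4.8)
The plan is to reduce to the upper half-plane and then prove the three asserted equalities separately: the first by an explicit computation with the Loewner flow, the second by the conformal restriction formula, and the third by a symmetry of the second. By the conformal invariance of the Loewner energy I first transport the picture to $(\m H, 0, \infty)$ by $\varphi$, so that $\tilde\g \is \varphi(\g)$ is a capacity-parametrized curve from $0$ with driving function $W$ and mapping-out functions $g_t$, while $\tilde\eta \is \varphi(\eta)$ is a slit attached to $\infty$ and disjoint from $\tilde\g$. Writing $\tilde\eta^t = g_t(\tilde\eta)$ and letting $\phi_t$ uniformize $\m H \setminus \tilde\eta^t$ onto $\m H$ with the normalization dictated by the target point $\eta(1)$, the map $\Phi_t \is \phi_t \circ g_t$ is the mapping-out function of $\tilde\g[0,t]$ computed inside the slit domain, and the driving function of $\tilde\g$ there is $\phi_t(W_t)$, reparametrized by the slit-domain capacity $s$.

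The core of the argument is the first equality. The key point is the capacity time-change $\dd s/\dd t = \phi_t'(W_t)^2$, which gives
\[
I_{D\setminus\eta, a, \eta(1)}(\g) = \frac12\int_0^T\left(\dot W_t + \frac{\partial_t\phi_t(W_t)}{\phi_t'(W_t)}\right)^2\dd t,
\]
so that everything reduces to identifying the drift $\partial_t\phi_t(W_t)/\phi_t'(W_t)$ with $6U^t$. To compute $\partial_t\phi_t$ I would use that the slit $\tilde\eta^t = g_t(\tilde\eta)$ is transported by the Loewner field $2/(z - W_t)$; comparing $\partial_t\Phi_t$ computed as $\partial_s\Phi_t \cdot \dd s/\dd t$ with $\partial_t(\phi_t\circ g_t)$ yields the evolution equation
\[
(\partial_t\phi_t)(w) = \frac{2\,\phi_t'(W_t)^2}{\phi_t(w) - \phi_t(W_t)} - \frac{2\,\phi_t'(w)}{w - W_t}.
\]
Expanding the right-hand side as $w \to W_t$, the singular parts cancel and the finite part is $-3\,\phi_t''(W_t)$, so the drift equals $-3\,\phi_t''(W_t)/\phi_t'(W_t)$ up to the contribution of the M\"obius normalization fixing the target $\eta(1)$; the inversion $1/\tilde\eta^t$ and its mapping-out function in the definition of $U^t$ are precisely what package this Schwarzian-type quantity together with the normalization term into $6U^t$. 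Subtracting $I_{\m H, 0, \infty}(\g) = \frac12\int_0^T\dot W_t^2\,\dd t$ then gives the first equality.

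For the second equality I would invoke conformal restriction. Because $\eta$ emanates from $b$, the domains $D$ and $D\setminus\eta$ do not coincide near $b$ and Corollary~\ref{cor_energy_change_restriction} does not apply verbatim; it is cleanest to differentiate the candidate expression in $t$ as $\g[0,t]$ grows and to match the integrand against $6U^t\dot W_t + 18(U^t)^2$, obtained by expanding the square in the first equality. The derivatives along the Loewner flow of the Brownian-loop term $12\,m^l(\m H; \tilde\g[0,t], \tilde\eta)$ and of the logarithm of the excursion-kernel cross-ratio are local quantities at the tip $W_t$, computable by the standard loop-measure and Poisson-kernel flow formulas, and they reproduce exactly this integrand; this is the content of Proposition~\ref{prop_energy_change_finite}. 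Finally, the third equality is immediate from symmetry: the expression in the middle line is invariant under the exchange $(\g, a, \g(1)) \leftrightarrow (\eta, b, \eta(1))$, since $m^l(\m H; \g, \eta)$ is symmetric and $H(\cdot; x, y)$ is symmetric in $x, y$, so running the same computation with $\eta$ growing and $\g$ fixed produces the same right-hand side.

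The main obstacle is the explicit drift computation behind the first equality: one must justify the evolution equation for the uniformizer $\phi_t$ of the moving slit, control the boundary regularity of $\phi_t$ near $W_t$ (which stays away from $\tilde\eta^t$ precisely because $\g \cap \eta = \emptyset$, keeping $U^t$ finite), verify the capacity time-change, and carefully track the M\"obius normalization at the target so that the finite part assembles into $6U^t$ with the correct constant. One must also check that $W$, and hence $\phi_t(W_t)$, is absolutely continuous so that the energies are finite and the change of variables in the integral is legitimate; here the finiteness of $I_{D,a,b}(\g)$ together with the disjointness of $\g$ and $\eta$ are what guarantee convergence of the integral.
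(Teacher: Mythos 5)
First, a caveat about the ground truth: the paper itself does not prove Proposition~\ref{prop_comm} --- it is imported verbatim from \cite[Lem.~4.3]{wang2016} --- so there is no internal proof to compare against; the comparison below is with that cited argument and with the closely related computations that do appear in the paper. Your proposal is correct and follows essentially the same route as the cited proof: the capacity time-change $\dd s/\dd t=\phi_t'(W_t)^2$ and your evolution equation for $\phi_t$ are exactly the computation the paper carries out in its proof of Lemma~\ref{lem_mobius} (including the cancellation of the singular parts and the finite part $-3\phi_t''(W_t)$), the second equality is obtained, as you say, by the restriction-type differentiation of the Brownian-loop and excursion-kernel terms as in Proposition~\ref{prop_energy_change_finite} (and your observation that Corollary~\ref{cor_energy_change_restriction} does not apply verbatim, since $D$ and $D\setminus\eta$ differ near $b$, is accurate), and the third equality does follow from the symmetry of the middle expression under exchanging $(\g,a,\g(1))\leftrightarrow(\eta,b,\eta(1))$, rerunning the first two equalities with the roles swapped. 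The one substantive step you assert without verification --- that the normalization at the target assembles the drift into $6U^t$ --- is where the constant could go wrong, and it deserves the short check: after centering so that the tip of $\tilde\g[0,t]$ sits at $0$, let $h_t$ be the hydrodynamically normalized mapping-out function of the inverted slit (reading $1/\tilde\eta^t$ with the standard half-plane inversion $z\mapsto-1/z$), so that $U^t$ is the image of the inverted tip under $h_t$; then $\psi_t(z):=-1/\bigl(h_t(-1/z)-U^t\bigr)$ uniformizes the slit complement sending the tip of $\tilde\eta^t$ to $\infty$, and from $h_t(\zeta)=\zeta+O(1/\zeta)$ one gets $\psi_t(z)=z-U^t z^2+O(z^3)$ near $0$, whence $\psi_t''(0)/\psi_t'(0)=-2U^t$ and the drift $-3\psi_t''(0)/\psi_t'(0)=6U^t$, confirming your claim. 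Finally, note that disjointness of $\g$ and $\eta$ forces $\g(1)\neq b$, hence $T<\infty$ and $U^t$ is bounded on $[0,T]$, which is what makes the difference of energies and your change of variables legitimate, in line with the concern raised in your last paragraph.
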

From the third equality we get again the second equality in the Corollary~\ref{cor_two_slits}.

From now on, we will consider simply connected domains that are complements of simple curves on the Riemann sphere. 
If $\g: [0,1] \to \Chat $ is a simple arc, the domain $\Chat \setminus \g[0,1]$ has two distinguished boundary points, $\g(0)$ and $\g(1)$. 
We will use the shorthand notation $I_{\g}$ for $I_{\Chat \setminus \g[0,1], \g(0), \g(1)}$.
\subsection{Loop Loewner energy} \label{sec_loop_energy}
In this section, we introduce the rooted loop Loewner energy. As we explained in the introduction, it is a natural generalization of the Loewner energy for chords. 
In order to distinguish the different types of energy that we are dealing with, we use the superscript $C$ for chords (\ie $I= I^C$), $L$ for loops and $A$ for arcs.

\begin{definition} A \emph{simple loop} is a continuous 1-periodic function $\g : \m R \to \Chat$, such that $\g (s) \neq \g(t)$, for $0 \leq s < t < 1$. We consider two loops as the same if they differ by an increasing reparametrization.  
\end{definition}

\begin{proposition}\label{prop:loop_energy_limit_exists} Both limits below exist and are equal: 
   \[ \lim_{\vare \searrow 0} I^C_{\g[0,\vare]} (\g[\vare, 1]) = \lim_{\d \searrow 0} I^C_{\g[-\d,0]} (\g[0, 1-\d]) \in [0,\infty].\]
   \end{proposition}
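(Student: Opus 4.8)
The plan is to realize both one-sided limits as the supremum of a single ``bulk'' energy $M(\vare,\d)$, and then to use that for a function on a product the iterated suprema coincide with the joint supremum.

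First I would set $F(\vare):=I^C_{\g[0,\vare]}(\g[\vare,1])$, $G(\d):=I^C_{\g[-\d,0]}(\g[0,1-\d])$, and introduce the symmetric bulk quantity $M(\vare,\d):=I^C_{\g[-\d,\vare]}(\g[\vare,1-\d])$, i.e.\ the chordal energy of the arc obtained by cutting a short piece off the loop on \emph{both} sides of the root. By additivity together with positivity of the energy, cutting away a longer arc can only decrease the measured energy, so $\vare\mapsto F(\vare)$ and $\d\mapsto G(\d)$ are non-increasing; hence both limits exist in $[0,\infty]$ and equal $\sup_\vare F(\vare)$ and $\sup_\d G(\d)$ respectively. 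The same additivity argument shows $M$ is non-increasing in each variable separately.

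The heart of the matter is the identity $\sup_\d M(\vare,\d)=F(\vare)$, together with its mirror image $\sup_\vare M(\vare,\d)=G(\d)$. To get it I would use reversibility to regard $\g[\vare,1]$ as a curve issued from the root $\g(0)$, uniformize $\Chat\setminus\g[0,\vare]$ onto $\m H$, parametrize by half-plane capacity, and write its energy as $\tfrac12\int_0^T \dot W^2\,\dd t$. Applying additivity at the capacity time $\tau_\d$ at which the growing slit is exactly the short arc $\g[1-\d,1]$ splits this integral as $\tfrac12\int_0^{\tau_\d}\dot W^2\,\dd t+\tfrac12\int_{\tau_\d}^{T}\dot W^2\,\dd t$: the first term is the energy of the short arc, and the second is precisely $M(\vare,\d)$, since mapping out $\g[1-\d,1]$ turns $\Chat\setminus\g[0,\vare]$ into $\Chat\setminus\g[-\d,\vare]$ and the remaining curve into $\g[\vare,1-\d]$ (conformal invariance). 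Because the capacity $\tau_\d$ of the vanishing slit tends to $0$ as $\d\to 0$, monotone convergence gives $\sup_\d M(\vare,\d)=\tfrac12\int_0^T\dot W^2\,\dd t=F(\vare)$, valid in $[0,\infty]$. The mirror identity follows from the identical computation carried out from the root along $\g[0,1-\d]$, whose initial slit $\g[0,\vare]$ again has capacity tending to $0$ as $\vare\to 0$.

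I would then conclude by the elementary fact that iterated suprema of a function on a product equal its joint supremum: $\lim_{\vare\to 0}F(\vare)=\sup_\vare\sup_\d M(\vare,\d)=\sup_{(\vare,\d)}M=\sup_\d\sup_\vare M(\vare,\d)=\lim_{\d\to 0}G(\d)$. The one delicate point, and the place I expect to have to be careful, is the infinite-energy regime: a priori one might worry that some $F(\vare)$ is infinite while the bulk $M(\vare,\d)$ stays bounded, severing the link between the two sides. The capacity-integral representation is exactly what neutralizes this obstacle—an infinite $F(\vare)$ simply means $\int_0^T\dot W^2\,\dd t=\infty$, and since $M(\vare,\d)=\tfrac12\int_{\tau_\d}^{T}\dot W^2\,\dd t$ with $\tau_\d\downarrow 0$, the bulk energy necessarily diverges as $\d\to 0$, so $\sup_\d M(\vare,\d)=F(\vare)$ persists in $[0,\infty]$. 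The only routine verifications left are that $\tau_\d\to 0$ (continuity of half-plane capacity for a slit shrinking to a boundary point) and that one and the same $M(\vare,\d)$ arises in both decompositions.
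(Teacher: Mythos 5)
Your proof is correct, but it follows a genuinely different route from the paper's. The paper obtains existence exactly as you do (monotonicity via additivity), but for the equality it splits $I^C_{\g[0,\vare]}(\g[\vare,1])$ into a two-slit energy concentrated near the root, $I^C_{\g[0,\vare]}(\g[-1/3,0]\cup\g[\vare,1/3])$, plus the $\vare$-independent term $I^C_{\g[-1/3,1/3]}(\g[1/3,2/3])$, and then, assuming the first two-slit limit is a finite value $A$, chains inequalities (using that $\lim_{\vare\searrow 0} I^C_{\g[-\d,0]}(\g[0,\vare])=0$) to bound the mirror limit by $A$, concluding by symmetry; the case of an infinite limit is handled by the symmetric inequality. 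You instead introduce the doubly truncated bulk $M(\vare,\d)=I^C_{\g[-\d,\vare]}(\g[\vare,1-\d])$, exhibit each one-sided limit as an iterated supremum of $M$ via the capacity-time split of the Dirichlet integral, and finish with the exchange of iterated suprema --- a more symmetric argument that bypasses the two-slit machinery of Corollary~\ref{cor_two_slits} and treats the value $\infty$ uniformly instead of through a finiteness hypothesis. Both proofs genuinely use reversibility: the paper to reverse $\g[\vare,1]$ before splitting, you both to grow the chord from $\g(1)$ so that $\g[1-\d,1]$ is the initial slit, and --- a point you label ``routine'' but which is precisely reversibility once more --- to identify the two oppositely oriented versions of $M(\vare,\d)$ produced by the two decompositions. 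One phrase in your write-up needs a one-line repair: ``an infinite $F(\vare)$ simply means $\int_0^T\dot W^2\,\dd t=\infty$'' overlooks that infinite energy may instead mean $W$ fails to be absolutely continuous; but if $\sup_\d M(\vare,\d)<\infty$, then $W$ is absolutely continuous on every $[\tau_\d,T]$ with $\dot W\in L^2$ uniformly, and since $L^2\subset L^1$ on bounded intervals and $W$ is continuous at $0$, it is absolutely continuous on all of $[0,T]$ --- so the identity $\sup_\d M(\vare,\d)=F(\vare)$ indeed persists in $[0,\infty]$. (Note also that here $T=\infty$, since the chord reaches its target point; this is harmless for your split and for monotone convergence.)
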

We define the \emph{rooted loop Loewner energy} of a simple loop $\g$ at $\g(0)$ to be this limit, denoted as $I^{L} (\g, \g(0))$.  It is clear that the definition does not depend on the increasing reparametrization fixing $\g(0)$.
Similarly, the energy of $\g$ rooted at $\g(s)$ is
   \[I^L(\g, \g(s)) := I^L(\tilde \g, \tilde \g(0)),\]
   where $\tilde \g$ is $\g$ "re-rooted at $\g (s)$", defined as $\tilde \g (t) = \g (t+s)$.
  
\begin{proof}
The existence follows from 
 \[I^C_{\g[0,\vare]} (\g[\vare, 1]) = I^C_{\g[0,\vare]} (\g[\vare, \vare']) + I^C_{\g[0,\vare ']} (\g[\vare ', 1])  \geq I^C_{\g[0,\vare']} (\g[\vare ', 1]),\]
 if $\vare' > \vare$.  The limit is then an increasing limit as $\vare \to 0$. The proof is the same for $\d \to 0$.
 
For the equality, it suffices to show
 \[ \lim_{\vare \searrow 0} I^C_{\g[0,\vare]} (\g[-1/3, 0] \cup \g [\vare,1/3]) = \lim_{\d \searrow 0} I^C_{\g[-\d,0]} (\g[-1/3, -\d] \cup \g [0,1/3]). \]
The above expressions are two-slit Loewner energies defined in Corollary~\ref{cor_two_slits}.
  In fact, it follows from the reversibility and the additivity of chordal Loewner energy that 
 \[I^C_{\g[0,\vare]} ( \g [\vare, 1])=  I^C_{\g[0,\vare]} (\g[-1/3, 0] \cup \g [\vare,1/3]) + I^C_{\g[-1/3, 1/3]} (\g[1/3, 2/3]).\]
 Now assume $\lim_{\vare \searrow 0} I^C_{\g[0,\vare]} (\g[-1/3, 0] \cup \g [\vare,1/3]) = A <\infty.$
Then $I^C_{\g[-\d,\vare]} ( \g [\vare, 1/3])\leq A$ for all $\vare>0,$ and it follows from the definition of chordal Loewner energy that $I^C_{\g[-\d,0]} ( \g [0, 1/3])\leq A,$ so that
(again from the definition) 
 \[ \lim_{\vare \searrow 0} I^C_{\g[-\d,0]} ( \g [0, \vare]) = 0. \]
 It follows that
\begin{align*}
   & I^C_{\g[-\d, 0]} (\g[-1/3, -\d] \cup \g [0,1/3]) \\ 
   =&  I^C_{\g[-\d,0]} ( \g [0, \vare]) + I^C_{\g[-\d, \vare]} (\g[-1/3, -\d] \cup \g [\vare,1/3])  \\
   = & \lim_{\vare \searrow 0} I^C_{\g[-\d, \vare]} (\g[-1/3, -\d] \cup \g [\vare,1/3])  \\
  = & \lim_{\vare \searrow 0} I^C_{\g[0, \vare]} (\g[-1/3, 0] \cup \g [\vare,1/3]) - I^C_{\g[0, \vare]} (\g[-\d,0]) \\
  \leq & A.
\end{align*}

We conclude that $\lim_{\d \searrow 0} I^C_{\g [-\d, 0]}(\g[-1/3, -\d] \cup \g[0,1/3]) \leq A$, and obtain the equality by symmetry. 
\end{proof}
Similarly, we define the \emph{Loewner energy of a simple arc} (continuous injective) $\eta: [0,1] \to \Chat$ rooted at $\eta(s)$ as follows:
\[I^A (\eta, \eta(s)) = \lim_{\vare \searrow 0} I^C_{\eta[s, s+\vare]}(\eta[0,s] \cup \eta[s+\vare,1]) = \lim_{\d \searrow 0} I^C_{\eta[s-\d, s]}(\eta[0,s-\d] \cup \eta[s,1]).\]

As the definitions suggests, the loop- and arc energies a priori depend strongly on the root, 
but we will prove that they are actually independent of it. We first deal with sufficiently regular loops (for instance in the class $C^{1.5 + \vare}$, $\vare >0$). 
This does not cover all finite energy loops, since there exist such loops which are not even $C^1$, see the last section for a construction of an example. 
We will now show that finite energy loops are quasicircles (images of $S^1$ by  quasiconformal homeomorphisms of $\Chat$). On the other hand,
notice that quasicircles do not necessarily have finite energy. 

\begin{proposition} \label{prop_quasicircle} If $\g$ is a finite energy loop when rooted at $\g(0)$, then $\g$ is a $K$-quasicircle, where $K$ depends only on $I^L(\g, \g(0))$. 
\end{proposition}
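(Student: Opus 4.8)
The plan is to combine the \emph{finite energy implies quasiconformality} property of chords with a compactness argument, producing in the limit a quasiconformal reflection across the whole loop. Write $E := I^L(\g,\g(0))<\infty$, and let $\Omega_1,\Omega_2$ be the two connected components of $\Chat\setminus\g[0,1]$. Since $(\Chat\setminus\g[0,\vare])\setminus\g[\vare,1]=\Chat\setminus\g[0,1]$, these domains do not depend on $\vare$. Fix three distinct points $P_i=\g(t_i)$ with $t_i\in\{1/4,1/2,3/4\}$ on the loop.

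First I would set up the one–sided quasiconformal model. For $0<\vare<1/4$ the arc $\g[\vare,1]$ is a chord in $(\Chat\setminus\g[0,\vare],\g(\vare),\g(0))$, and by the monotonicity established in the proof of Proposition~\ref{prop:loop_energy_limit_exists} its chordal energy $I^C_{\g[0,\vare]}(\g[\vare,1])$ is at most $E$. By the \emph{finite energy implies quasiconformality} property, together with the fact that the quasiconformal constant is controlled by the energy, there are a conformal map $\Phi_\vare:\Chat\setminus\g[0,\vare]\to\m H$ with $\Phi_\vare(\g(\vare))=0$ and $\Phi_\vare(\g(0))=\infty$, and a $K$–quasiconformal self–map $F_\vare$ of $\m H$ fixing $0$ and $\infty$, with $K=K(E)$ depending only on $E$, such that $\Phi_\vare(\g[\vare,1])=F_\vare(i\m R_+)$. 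Put $\Psi_\vare:=\Phi_\vare^{-1}\circ F_\vare:\m H\to\Chat\setminus\g[0,\vare]$; this is a $K$–quasiconformal homeomorphism carrying $i\m R_+$ onto $\g[\vare,1]$ and the two quadrants $\{\Re z<0\}\cap\m H$ and $\{\Re z>0\}\cap\m H$ onto $\Omega_1$ and $\Omega_2$.

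Next I would reflect and pass to the limit. Let $r(z)=-\bar z$ be the anticonformal reflection of $\m H$ across $i\m R_+$, and set $R_\vare:=\Psi_\vare\circ r\circ\Psi_\vare^{-1}$. Since $r$ fixes $i\m R_+$ pointwise, interchanges the two quadrants, and is an involution, $R_\vare$ is a $K^2$–quasiconformal orientation–reversing involution of $\Chat\setminus\g[0,\vare]$ that fixes $\g[\vare,1]$ pointwise and interchanges $\Omega_1$ and $\Omega_2$; in particular each $R_\vare$ fixes $P_1,P_2,P_3$, which lie on $\g[\vare,1]$. Because $K^2$–quasiconformal maps fixing three points form a normal family with nondegenerate limits, I would extract a subsequence $R_\vare\to R$ converging locally uniformly on $\Chat\setminus\{\g(0)\}$ as $\vare\searrow 0$, with $R$ again $K^2$–quasiconformal. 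In the limit $R$ is an orientation–reversing involution fixing every point of $\g(0,1)$ — each such point lies on $\g[\vare,1]$ for all small $\vare$ — and interchanging $\Omega_1,\Omega_2$. Removing the isolated singularity at $\g(0)$ (isolated points are quasiconformally removable), $R$ extends to a $K^2$–quasiconformal reflection of $\Chat$ across the Jordan curve $\g[0,1]$.

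Finally, the existence of a $K^2$–quasiconformal reflection fixing $\g[0,1]$ and interchanging its complementary domains implies, by Ahlfors' characterization of quasicircles, that $\g[0,1]$ is a $K$–quasicircle with $K$ depending only on $K^2=K(E)^2$, hence only on $I^L(\g,\g(0))$. The main obstacle is the limiting step. For fixed $\vare$ the reflection $R_\vare$ fixes only the large sub‑arc $\g[\vare,1]$ and not the shrinking root arc $\g[0,\vare]$; indeed $R_\vare$ need not even descend to a homeomorphism of $\Chat$ across that arc, because it scrambles the two sides of the slit. Thus the quasicircle property is recovered only after the slit collapses to the single point $\g(0)$, and the three fixed points on the loop are exactly what prevents the limit from degenerating and guarantees that $R$ is a genuine reflection.
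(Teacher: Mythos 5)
Your proof is correct, but it takes a genuinely different route from the paper's. Both arguments rest on the same core input, namely that a finite-energy chord is a $K$-quasislit with $K$ controlled by the energy (\cite[Prop.~2.1]{wang2016}), together with the monotonicity $I^C_{\g[0,\vare]}(\g[\vare,1])\leq I^L(\g,\g(0))$ from Proposition~\ref{prop:loop_energy_limit_exists}. From there the paths diverge: the paper builds a single \emph{global} quasiconformal homeomorphism $F=f\circ\varphi\circ\psi\circ\sqrt{\cdot}$ of $\Chat$ mapping $\m R\cup\{\infty\}$ onto $\g$ by conformal welding --- it proves quasisymmetry of the welding homeomorphism of $\g[0,1/2]$ via \cite[Lem.~C]{wang2016}, extends it by Jerison--Kenig, and glues across $\m R_+$, with no limiting procedure at all; you instead construct, for each $\vare$, a $K(E)^2$-quasiconformal reflection $R_\vare$ of the slit sphere across $\g[\vare,1]$, collapse the slit by a normal-families argument (the three fixed points $P_1,P_2,P_3$ correctly rule out constant or two-point degenerate limits of $K$-qc embeddings), remove the isolated singularity at the root $\g(0)$, and invoke Ahlfors' reflection characterization of quasicircles. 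Your approach buys a complete bypass of the welding analysis and of the quasisymmetric-extension machinery, at the price of heavier but classical tools (normal families of qc embeddings on an exhausting sequence of domains --- which strictly requires a diagonal argument, as the domains of the $R_\vare$ vary --- removability of point singularities, and Ahlfors' theorem); the paper's construction is fully explicit, avoids compactness, and hands you the qc parametrization of the loop directly, which is also what makes the remark after the proof (energy $0$ implies circle) immediate --- though your argument yields that too, since $K(0)=1$ forces an anticonformal reflection. Two small points worth tightening: verify the involution identity $R\circ R=\mathrm{id}$ \emph{after} extending across the puncture (before the extension, $R_\vare(z)$ may drift toward $\g(0)$, where locally uniform control is lost), and note that Ahlfors' criterion does not actually need $R$ to be an involution --- a sense-reversing qc homeomorphism of $\Chat$ fixing $\g$ pointwise suffices --- so the swap of $\Omega_1,\Omega_2$, which you in any case obtain in the limit, can be treated lightly. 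In both proofs the final constant depends only on $I^L(\g,\g(0))$, as required.
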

\begin{proof}

 \begin{figure}[ht]
 \centering
 \includegraphics[width=0.7\textwidth]{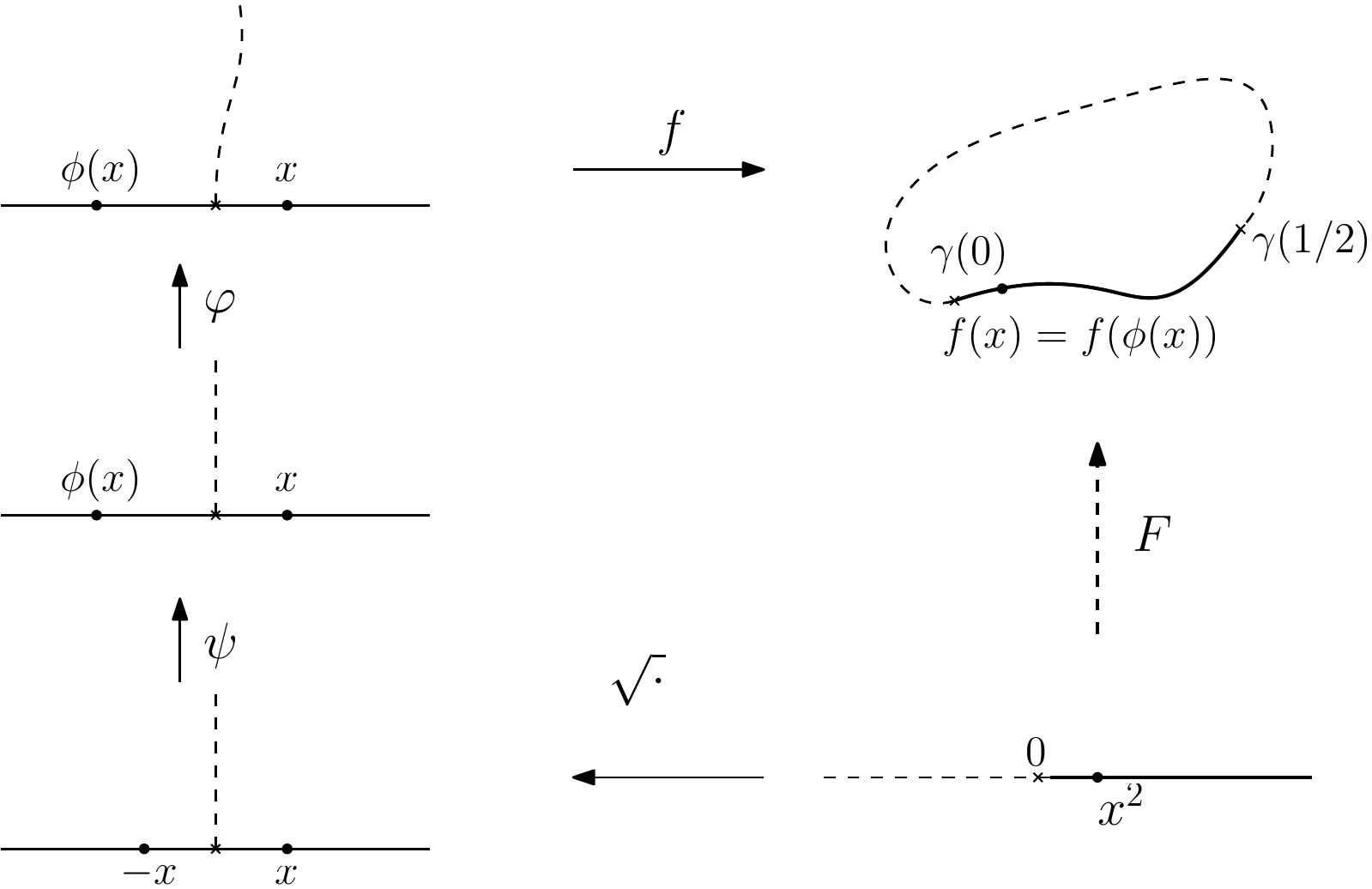}
 \caption{\label{fig_prop_loop} Maps in the proof of Proposition~\ref{prop_quasicircle}. Solid lines are the boundary of domains.} 
 \end{figure}

It is not hard to see from Carath\'eodory's theorem that every uniformizing conformal map $f :\m H\to \Chat \setminus \g[0,1/2]$ extends continuously to $\m R$. 
Thus we may normalize $f$ such that $0$ and $\infty$ are sent to the two tips of $\g[0,1/2]$, say $f(0)=\g(1/2)$ and $f(\infty)=\g(0)$. 
Furthermore, $f$ induces a welding function $\phi$ on $\m R$ that is defined by the property that $\phi(x) = y$ if and only if 
$x= y = 0$ or $f(x) = f(y)$ when $x\neq y$. Thus $\phi$ is a decreasing involution that encodes which points on the real line are identified by $f$ 
in order to form $\g[0,1/2]$. Since $I^A(\g[0,1/2], \g(0)) \leq I^L (\g, \g(0))$, the welding function $\phi$ is an orientation reversing $M$-quasisymmetric 
function, where $M$ depends only on $I^L(\g, \g(0))$: To see this, fix $x > 0$, set $y = \phi(x)$ and let $t>0$ be defined by $\g(t) = f(x)$. 
Then the welding function $\phi$ restricted to $[y,x]$ is the welding function for the slit $\g[t, 1/2]$ in the simply connected domain 
$\Chat \setminus \g[0,t]$. Hence \cite[Prop.~2.1]{wang2016} implies that both inequalities in \cite[Lem.~C]{wang2016} hold on the interval $[y,x]$. 
As we can choose $x$ as large as we want, the inequalities hold on $\m R$ and it follows that $\phi$ is quasisymmetric.

Next, consider the homeomorphism $\psi$ of $\m R$ that sends the symmetric pair of points $x,-x$ to the pair $x, \phi(x)$ for all $x\geq0.$ In other words,
define $\psi(x)=x$ for $x\geq0$ and $\psi(x)=\phi(-x)$ for $x<0.$ Then $f(\psi(-x))=f(\psi(x))$ for all $x.$ It is easy to see, again using
both inequalities in \cite[Lem.~C]{wang2016}, that $\psi$ is quasisymmetric (again with constant depending only on $I^L(\g, \g(0))$).
Any quasisymmetric function that fixes $0$ can 
be extended to a quasiconformal map in $\m H$ that fixes $i\m R_+$ (for instance via the Jerison-Kenig extension, \cite[Thm.~5.8.1]{astala2008elliptic}). 
Denote such an extension again by $\psi.$

Now let $\eta=f^{-1}(\g[1/2,1])$ and note that $I_{\m H}(\eta) \leq I^L(\g, \g(0))$, $\eta$ is a $K$-quasislit by \cite[Prop.~2.1]{wang2016}.
In other words, there exists a $K$-quasiconformal self-map $\varphi$ of $\m H$ fixing $0$ and $\infty$ such that $\varphi (i\m R_+) = \eta$, 
where $K$ depends only on the chordal energy of $\eta$. The restriction of $\varphi$ to $\m R$ is a quasisymmetric function. 
Thus by pre-composing $\varphi$ with a $K$-quasiconformal extension of $\varphi^{-1}$ that fixes $i\m R_+$, we can choose $\varphi$ such that 
$\varphi (x) = x$ for $x \in \m R$. 

Finally, define a quasiconformal homeomorphism of the Riemann sphere that maps the real line to the loop $\g$ as follows: Denote $\sqrt{.}$ the branch of the square-root 
that maps the slit plane $\m C\setminus[0,\infty)$ to $\m H$ and consider the function 
$$F = f \circ \varphi \circ \psi \circ \sqrt{.}\,\, .$$
As a composition of quasiconformal homeomorphisms, it is quasiconformal in $\m C\setminus[0,\infty).$ The negative real line is mapped to $i\m R_+$ under
$\sqrt{.}$, fixed by $\psi,$  mapped to $\eta$ under $\varphi$ and finally mapped to $\g[1/2,1]$ under $f.$ Furthermore, $F$ extends continuously across $\m R_+$:
Indeed, points $x^2\in \m R_+$ split up into the pair $-x,x$ under $\sqrt{.}$, map to the pair $\psi(-x),\psi(x)$, which is unchanged under $\varphi$ and
mapped to a point $f(\psi(-x))=f(\psi(x))$ on $\g[0,1/2]$ under $f.$ Thus $F$ is a homeomorphism of the sphere that is quasiconformal in the complement of 
the real line, and thus quasiconformal on the whole sphere.
\end{proof}

Notice that if $I^L(\g, \g(0)) = 0$, the above proof can be easily modified to prove that $\g$ is a circle ($1$-quasicircle). 

\subsection{Root-invariance for sufficiently regular loops} \label{sec_smooth_invariance}
We first give a sufficient regularity condition for a loop to have finite energy which is a consequence of Theorem~\ref{thm_main}. 

\begin{proposition}\label{prop_finite_loop}
  If $\b > 1/2$, the Loewner energy of a $C^{1,\b}$ simple loop $\g$ rooted at $\g(0)$ is finite.
\end{proposition}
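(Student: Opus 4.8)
The plan is to bound the defining limit of $I^L(\g,\g(0))$ by reducing, through additivity and conformal invariance, to the ``regular curves have finite energy'' consequence of Theorem~\ref{thm_main}. By Proposition~\ref{prop:loop_energy_limit_exists} the quantity $I^L(\g,\g(0))=\lim_{\vare\searrow 0} I^C_{\g[0,\vare]}(\g[\vare,1])$ is an \emph{increasing} limit, so it suffices to exhibit a bound on $I^C_{\g[0,\vare]}(\g[\vare,1])$ that is uniform for small $\vare>0$. Fixing $s_0\in(0,1/2)$ and applying additivity twice, I would split
\[ I^C_{\g[0,\vare]}(\g[\vare,1]) = I^C_{\g[0,\vare]}(\g[\vare,s_0]) + I^C_{\Chat\setminus\g[0,s_0],\,\g(s_0),\,\g(0)}(\g[s_0,1-s_0]) + I^C_{\Chat\setminus\g[0,1-s_0],\,\g(1-s_0),\,\g(0)}(\g[1-s_0,1]) \]
into a bulk term, bounded away from the root, and two terms describing the loop in a neighborhood of $\g(0)$.

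For the bulk (middle) term I would map the slit domain $\Chat\setminus\g[0,s_0]$ conformally onto $\m C\setminus\m R_+$, sending the tip $\g(s_0)$ to $0$ and $\g(0)$ to $\infty$. Since $\g$ is a $C^{1,\b}$ loop, $\g[0,s_0]\cup\g[s_0,1-s_0]$ is $C^{1,\b}$ through the interior point $\g(s_0)$; boundary regularity of the conformal map at the slit tip (Kellogg--Warschawski, see \cite{pomm1992boundary}) then shows that the image of $\g[s_0,1-s_0]$ is a $C^{1,\b}$ arc of finite length, tangentially attached to $\m R_+$, that does not reach $\infty$. Its half-plane capacity is thus finite, and the regular-curves consequence of Theorem~\ref{thm_main} (the driving function of the square root is $C^{1,\b-1/2}$, hence has bounded derivative on the compact capacity interval) gives finiteness of this term, with a bound depending only on $\norm{\g}_{1,\b}$ and the geometry of $\g[0,s_0]$, uniformly in $\vare$.

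The first and third terms, which together capture the loop near the root $\g(0)$, are the crux. By chordal reversibility the third (target-reaching) term is of the same nature as the first, so it is enough to control $\lim_{\vare\searrow 0}I^C_{\g[0,\vare]}(\g[\vare,s_0])=I^A(\g[0,s_0],\g(0))$. Here I would exploit the square-root correspondence of the introduction: after a M\"obius map sending $\g(0)$ to $\infty$ and the tangent line of $\g$ at $\g(0)$ to $\m R$, the $C^1$-regularity of the loop at the root forces its two arms to be asymptotic to $\m R_+$ and $\m R_-$, so that the branch $\sqrt{\cdot}$ unfolds the local picture into a chord in $\m H$ whose initial segment is the square root of a $C^{1,\b}$ arc tangentially attached to $\m R_+$. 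Theorem~\ref{thm_main} then yields a $C^{1,\b-1/2}$ driving function near capacity $0$, in particular a bounded $\dot W$ there, so the energy contributed near the root is finite. Summing the three uniform bounds and passing to the increasing limit gives $I^L(\g,\g(0))<\infty$.

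I expect the genuine difficulty to lie entirely in this last step: converting the ``rooting'' operation at $\g(0)$ into an honest hypothesis of Theorem~\ref{thm_main}, that is, verifying that the unfolded curve is $C^{1,\b}$ and tangentially attached to $\m R_+$ so that the converse-regularity theorem applies and controls $\dot W$ as $t\to 0$. The bulk estimate and the reduction itself are routine given the results already assembled; the conformal boundary regularity at the slit tips is standard, and the only delicate point is the passage through the closing point of the loop.
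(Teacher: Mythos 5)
There is a genuine gap, and it sits exactly where the paper flags the real difficulty. In your decomposition the third term, $I^C_{\Chat\setminus\g[0,1-s_0],\,\g(1-s_0),\,\g(0)}(\g[1-s_0,1])$, is the energy of a chord that \emph{reaches its target}: $\g(1)=\g(0)$, so the half-plane capacity is infinite and the energy is $\tfrac12\int_0^\infty \dot W_t^2\,\dd t$. Your claim that ``by chordal reversibility the third (target-reaching) term is of the same nature as the first'' is not correct: reversibility swaps the two marked points, but the reversed chord still joins one tip of the slit $\g[0,1-s_0]$ to the other, so it again has infinite capacity; it is not a limit of finite-capacity chord energies like the first term, and further splitting only pushes the same problem into the final piece. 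Consequently the mechanism you invoke --- Theorem~\ref{thm_main} gives a $C^{1,\b-1/2}$ driving function near capacity $0$, ``in particular a bounded $\dot W$ there, so the energy contributed near the root is finite'' --- only controls $\int_0^{T_0}\dot W_t^2\,\dd t$ for finite $T_0$; Theorem~\ref{thm_main} is a statement about finite-length arcs on compact capacity intervals, and boundedness (even continuity) of $\dot W$ on $[0,\infty)$ says nothing about integrability of $\dot W^2$ as $t\to\infty$, i.e.\ about the tangential approach of the closing chord to the root along the boundary slit. No purely local regularity statement can supply this, since the energy of an infinite-capacity chord depends on the global geometry of the domain. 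The paper states this obstruction verbatim (``Since we are now dealing with an infinite capacity chord, the mere regularity of the driving function is not sufficient to guarantee the finiteness of the energy'') and overcomes it with an idea absent from your proposal: replace $\g[1/4,1/2]$ by the hyperbolic geodesic in the complement of $\g[-1/2,1/4]$ to form a modified loop $\tilde\g$; show by an exact identity, using reversibility and the vanishing energy of the geodesic piece, that the closing chord has finite energy in the modified domain; and then transfer finiteness back via the conformal restriction formula (Corollary~\ref{cor_energy_change_restriction}), whose correction terms --- a Brownian loop-measure term, finite because $\g[3/4,1]$ stays at positive distance from both $\g[1/4,1/2]$ and $\tilde\g[1/4,1/2]$, and excursion-kernel terms, always finite --- are controlled. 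Without this comparison, or some substitute yielding $\dot W\in L^2(0,\infty)$, your argument cannot close the loop.

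A secondary, repairable defect: your treatment of the first term via ``M\"obius to $\infty$ plus square-root unfolding of the two arms'' does not parse as stated, because the arc $\g[0,s_0]$ has only one arm at $\g(0)$ --- the two-arm picture belongs to the full loop through the root, not to the endpoint-rooted arc energy $I^A(\g[0,s_0],\g(0))$, which is a limit of chord energies in the complement of the shrinking slit $\g[0,\vare]$. The paper handles this term differently and more simply: extend the arc by a short straight segment attached tangentially at the root (so the extension is still $C^{1,\b}$ and the segment has zero arc energy), and deduce from additivity and reversibility that $I^A(\eta[0,1],\eta(0))\leq I^C_{\eta[-1,0]}(\eta[0,1])<\infty$, the latter being a finite-capacity regular chord to which the consequence of Theorem~\ref{thm_main} applies. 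Your bulk-term estimate is fine and matches the paper's ``regular curves have finite energy'' bullet, but the infinite-capacity closing chord is the crux, and it is precisely the step your proposal misidentifies and leaves unproved.
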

Notice that the regularity does not depend on the choice of root.

\begin{proof}
We first prove that $I^A(\eta, \eta(0)) < \infty$ if $\eta : [0,1] \to \Chat$ is a $C^{1,\b}$ simple arc. To this end, we extend $\eta$ by attaching a small piece of straight segment tangentially at $\eta(0)$, denote the new arc $\eta[-1,1]$, and note that it is again a $C^{1,\b}$ arc. 
From the property of Loewner energy on regular chords that we discussed in Subsection~\ref{sec_chordal_energy}, we know that
\[
I^C_{\eta[-1,0]} (\eta[0,1])<\infty.\]
We have also 
\begin{align*}
I^A(\eta[0,1], \eta(0)) & = I^A(\eta[-1,1], \eta(0)) - I^C_{\eta[0,1]}(\eta[-1,0]) \\
 & = I^A(\eta[-1,0], \eta(0)) + I^C_{\eta[-1,0]}(\eta[0,1]) - I^C_{\eta[0,1]}(\eta[-1,0]) \\
 & \leq 0 + I^C_{\eta[-1,0]}(\eta[0,1]) < \infty.
\end{align*}
In particular,  
$I^A(\g[0,1/4], \g(0)) <\infty$.

Next, we show that $I^C_{\g[0,1/4]} (\g[1/4,1]) <\infty$ which then concludes the proof since
\[I^L(\g, \g(0)) = I^A(\g[0,1/4], \g(0)) + I^C_{\g[0,1/4]}(\g[1/4,1]).\]
Since we are now dealing with an infinite capacity chord, the mere regularity of the driving function is not sufficient to guarantee the finiteness of the energy. Instead, we apply Corollary~\ref{cor_energy_change_restriction} with a domain obtained from a carefully chosen modification of $\g$:
From the first part,
\[I^C_{\g[0,1/4]} (\g[1/4, 3/4]) = I^A(\g[0,3/4], \g(0)) - I^A (\g[0,1/4], \g(0)) < \infty.\]
Similarly $I^C_{\g[0,1/4]} (\g[-1/2, 0]) <\infty$. Let $\tilde \g $ be the simple loop by completing $\g[-1/2, 1/4]$ with the hyperbolic geodesic connecting $\g(-1/2)$ and $\g(1/4)$ in the complement of $\g[-1/2, 1/4]$, such that $\tilde \g(x) = \g(x)$ for $x \in [-1/2,1/4]$ (see Figure~\ref{fig two props}). From the reversibility of the chordal Loewner energy,
\begin{align*} 
   I^C_{\tilde \g [0,3/4]} (\tilde \g [3/4,1]) &= I^C_{\tilde \g [0,1/4]} (\tilde \g [1/4,1]) - I^C_{\tilde \g [0,1/4]} (\tilde \g [1/4,3/4]) \\
   &\leq I^C_{\tilde \g [0,1/4]} (\tilde \g [1/4,1])  \\
   & =  I^C_{\tilde \g [0,1/4]} (\tilde \g [1/2,1]) <\infty.
\end{align*}
Since $\tilde \g$ differs from $\g$ only on the part of the loop parametrized by  $[1/4,1/2]$, the domain $\Chat \setminus \tilde \g[0,3/4]$ coincides with $\Chat \setminus \g[0,3/4]$ in a neighborhood of the two marked boundary points $\g(0)$ and $\g(3/4)$. We can apply Corollary~\ref{cor_energy_change_restriction} to show
\begin{align*}
    I^C_{\tilde \g [0,3/4]} (\g [3/4,1]) - I^C_{\g [0,3/4]} (\g [3/4,1]) < \infty.
\end{align*}
Indeed, since $\g[3/4,1]$ is at positive distance to both $\g[1/4,1/2]$ and $\tilde \g[1/4,1/2]$, the Brownian loop measure term is finite, and the excursion kernel term is always finite. Hence 
\[I^C_{\g[0,1/4]}(\g[1/4, 1]) = I^C_{\g[0,1/4]}(\g[1/4, 3/4]) + I^C_{\g[0,3/4]}(\g[3/4, 1]) <\infty,\]
which concludes the proof.
\end{proof}
In particular, any loop formed by concatenating finitely many circular arcs
has finite energy if and only if any two adjacent arcs have the same tangent at their common point: Indeed, it is easy to check that such a loop is $C^{1,1}$ and any corner with angle different from $\pi$ has infinite energy (see  Section~\ref{sec_chordal_energy}).

 \begin{figure}[ht]
 \centering
 \includegraphics[width=0.8\textwidth]{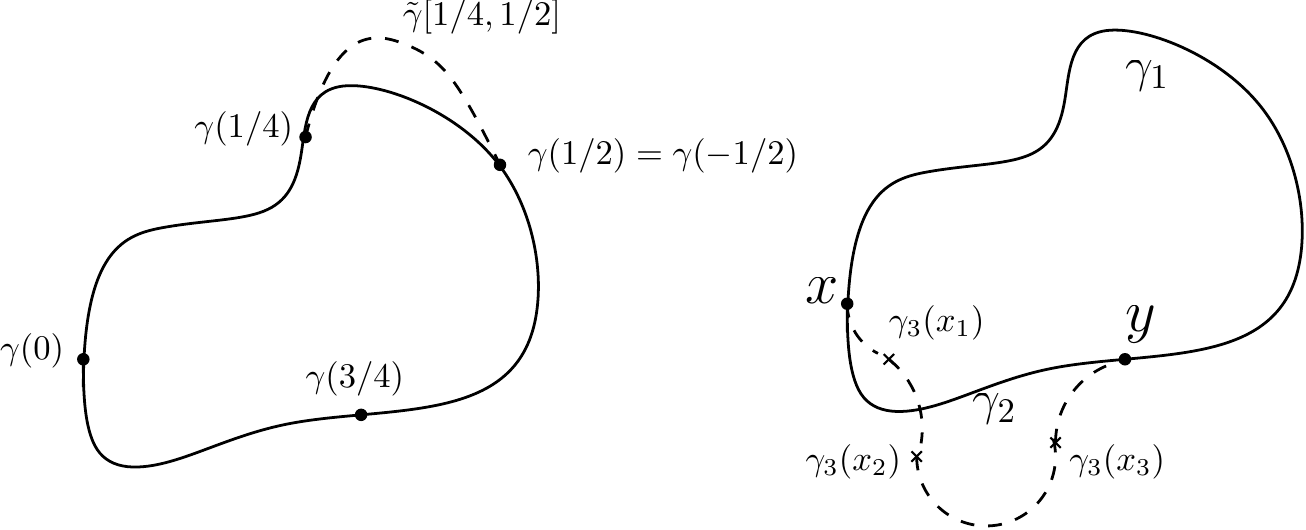}
 \caption{\label{fig two props} Illustrations of the surgeries made in the proof of Proposition~\ref{prop_finite_loop} (left) and Propsition~\ref{prop_regular_invariant} (right). Left: $\tilde \g$ is the loop obtained from replacing $\g[1/4, 1/2]$ by the hyperbolic geodesic in the complement of $\g[-1/2, 1/4]$. Right: $x$ and $y$ separates the solid loop into $\g_1$ and $\g_2$, $\g_3$ is formed by concatenation of circular arcs and replaces $\g_2$ in the proof.} 
 \end{figure}

\begin{proposition}\label{prop_regular_invariant} 
If $\b >1/2$, the Loewner energy of a $C^{1,\b}$ simple loop $\g$ is independent of the root. 
\end{proposition}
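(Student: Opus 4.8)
The plan is to prove that $I^{L}(\g,\g(a))=I^{L}(\g,\g(b))$ for any two points $\g(a),\g(b)$ on the loop. Since $\b>1/2$, Proposition~\ref{prop_finite_loop} guarantees that every loop, arc and chordal energy appearing below is finite, so I may freely add and subtract them. The starting point is a decomposition that isolates the root-dependence. Writing $\g[a,b]$ and $\g[b,a]$ for the two complementary sub-arcs joining $\g(a)$ and $\g(b)$, the additivity of the energy together with the two equivalent definitions of the loop energy (Proposition~\ref{prop:loop_energy_limit_exists}) give
\[
I^{L}(\g,\g(a))=I^{A}(\g[a,b],\g(a))+I^{C}_{\g[a,b]}(\g[b,a]),\qquad
I^{L}(\g,\g(b))=I^{A}(\g[a,b],\g(b))+I^{C}_{\g[a,b]}(\g[b,a]).
\]
The chordal term $I^{C}_{\g[a,b]}(\g[b,a])=I^{C}_{\Chat\setminus\g[a,b],\,\g(a),\,\g(b)}(\g[b,a])$ is the same in both lines because the chordal energy is reversible, i.e. symmetric in its two marked points. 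Subtracting,
\begin{equation*}
I^{L}(\g,\g(a))-I^{L}(\g,\g(b))=I^{A}(\g[a,b],\g(a))-I^{A}(\g[a,b],\g(b)).\tag{$\star$}
\end{equation*}
Thus the root-difference of the loop energy is governed entirely by the arc energy of a connecting sub-arc, and by reversibility it may be computed using \emph{either} of the two sub-arcs $\g[a,b]$ or $\g[b,a]$.

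The second ingredient is that a single circular arc is root-symmetric: if $A$ is a circular arc with endpoints $u,v$, then $I^{A}(A,u)=I^{A}(A,v)$. Indeed, the arc energy is invariant under M\"obius transformations of $\Chat$ (being built from the conformally invariant chordal energy), so I may normalize $A$ to be a sub-arc of the unit circle with endpoints $e^{\pm i\alpha}$ symmetric about $\m R$. The M\"obius involution $z\mapsto 1/z$ then fixes the unit circle, maps $A$ onto itself, and exchanges $u=e^{i\alpha}$ with $v=e^{-i\alpha}$, whence $I^{A}(A,u)=I^{A}(A,v)$. Feeding this into $(\star)$ yields the key mobility statement: \emph{within any finite-energy loop the root may be slid freely along a circular arc}. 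Precisely, if $p,q$ lie on a common circular sub-arc of a loop $\g'$, then applying $(\star)$ with the connecting sub-arc taken to be the circular piece between $p$ and $q$ makes the right-hand side vanish, so $I^{L}(\g',p)=I^{L}(\g',q)$.

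With these two tools the proof follows the surgery in Figure~\ref{fig two props} (right). Fix the points $x=\g(a)$ and $y=\g(b)$ to be compared, and let $\g_1,\g_2$ be the two sub-arcs into which they split $\g$. I replace $\g_2$ by a concatenation $\g_3$ of finitely many circular arcs joining $x$ to $y$, internally $C^{1,1}$ and matching the tangent directions of $\g_1$ at both $x$ and $y$; then $\g'=\g_1\cup\g_3$ is again a $C^{1,\b}$ simple loop, hence of finite energy by Proposition~\ref{prop_finite_loop}. Applying $(\star)$ with the connecting sub-arc taken to be the common piece $\g_1$ shows that the root-difference depends only on $\g_1$ and is therefore unchanged by the surgery:
\[
I^{L}(\g,x)-I^{L}(\g,y)=I^{A}(\g_1,x)-I^{A}(\g_1,y)=I^{L}(\g',x)-I^{L}(\g',y).
\]
On the other hand, in $\g'$ the points $x$ and $y$ are the endpoints of $\g_3$, and sliding the root across the successive circular arcs of $\g_3$ via the mobility statement gives $I^{L}(\g',x)=I^{L}(\g',y)$. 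Hence $I^{L}(\g,x)=I^{L}(\g,y)$, and since $x,y$ were arbitrary the loop energy is root-independent.

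The conceptual core is the reversibility-based identity $(\star)$ together with the M\"obius symmetry $z\mapsto 1/z$ of a circular arc, both of which are clean. I expect the genuine technical obstacle to lie in the surgery: one must construct the circular-arc concatenation $\g_3$ so that it matches the prescribed tangents of $\g_1$ at $x$ and $y$, stays simple and disjoint from $\g_1$ (so that $\g'$ is an honest simple loop), and keeps $\g'$ in class $C^{1,\b}$ at the two junctions, so that Proposition~\ref{prop_finite_loop} applies and every energy in the computation is finite. A two-arc (biarc) interpolation inside the complementary domain $\Chat\setminus\g_1$ should suffice, but verifying simplicity and the $C^{1,\b}$ matching at the junctions is where the care is required.
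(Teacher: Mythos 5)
Your proof is correct and follows essentially the same route as the paper: the same decomposition reducing the root-difference of the loop energy to $I^A(\g_1,x)-I^A(\g_1,y)$ (using reversibility to cancel the common chordal term), the same surgery replacing $\g_2$ by a tangentially matched concatenation $\g_3$ of circular arcs so that Proposition~\ref{prop_finite_loop} yields finiteness, and the same sliding of the root across the junction points of $\g_3$. The only cosmetic difference is that the paper transports the root along $\g_3$ via the fact that a circular arc has arc energy identically zero at every root, whereas you use only endpoint symmetry of a single circular arc through the M\"obius involution $z\mapsto 1/z$; and the construction of $\g_3$ that you flag as the remaining technical point is carried out in the paper exactly as you anticipate, by a simple piecewise linear arc in $\Chat\setminus\g_1$ tangent to $\g_1$ at its tips, with corners rounded by small circular arcs.
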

\begin{proof} 
   Two distinct points $x, y \in \g$ separate $\g$ into two arcs which we denote by $\g_1$ and $\g_2$.  The additivity gives 
   $$I^L(\g, x) = I^A(\g_1, x) + I^C_{\g_1} (\g_2)$$
   and similarly
   \[I^L(\g, y) = I^A(\g_1, y) + I^C_{\g_1} (\g_2).\]
   Since $I^L(\g,x)$ and $I^L(\g,y)$ are finite, 
   it suffices to prove the equality of the arc Loewner energy on the right hand side.
   
   We complete $\g_1$ by another arc $\g_3$ to form a loop with continuous tangent (see Figure~\ref{fig two props}), where $\g_3 [0,1]$ is a finite concatenation of circular arcs: there exists a sequence $0 = x_0< x_1 <\cdots<x_n = 1$, such that $\g_3[x_i, x_{i+1}]$ is an circular arc for every $i$ (we consider segments as circular arcs).
   
   We give  an explicit construction of $\g_3$: 
   since $\g_1$ is a $C^{1,\b}$ arc, we can first construct a simple, piecewise linear arc $\tilde \g_3$ that connects two tips of $\g_1$, being tangent to $\g_1$ at tips and contained in $\Chat \setminus \g_1$. 
   Then replace each corner of $\tilde \g_3$ by a (very) small circular arc smoothing out the corner without intersecting other parts of the loop.

    Tangentially concatenated circular arcs form a $C^{1,1}$ arc therefore the new loop has finite energy by Proposition \ref{prop_finite_loop}. 
    The above energy decomposition tells us
   \begin{align*} I^A(\g_1, x)  = I^A(\g_1, y)
   &\iff I^L(\g_1 \cup \g_3, x)  
   = I^L(\g_1 \cup \g_3, y)\\ 
   & \iff I^A(\g_3,x) = I^A(\g_3,y).
   \end{align*}
We know that for every circular arc $\eta[0,1]$, the arc energy $I^A(\eta, \eta(s)) = 0$ for all $s \in [0,1]$. It is in particular root-invariant. Hence, for $0 \leq i \leq n-1$,
\begin{align*}
I^A(\g_3[0,1],\g_3(x_i)) &= I^A(\g_3[x_i,x_{i+1}], \g_3(x_i)) + I^C_{\g_3[x_i,x_{i+1}]} (\g_3[0,x_i] \cup \g_3[x_{i+1},1]) \\
& = I^A(\g_3[x_i,x_{i+1}], \g_3(x_{i+1})) + I^C_{\g_3[x_i,x_{i+1}]} (\g_3[0,x_i] \cup \g_3[x_{i+1},1]) \\
&= I^A (\g_3[0,1],\g_3(x_{i+1})).
\end{align*}
Hence
$$I^A(\g_3,x) = I^A(\g_3, \g_3(0)) = I^A(\g_3, \g_3(1)) = I^A(\g_3, y),$$
which concludes the proof.
\end{proof}

\subsection{Root-invariance for finite energy loops} \label{sec_general_invariance}

We are now ready to prove the general root-invariance of the loop Loewner energy.  We start with the lower-semicontinuity of the loop Loewner energy.

\begin{lemma} \label{lem_lower_semicontinuity} Let $(\g_n: [0,1] \to \Chat)_{n \geq 0}$ be a family of simple loops such that $\g_n(k/2) = \g_0(k/2)$ for $k = 0,1$. If there exists 
a simple loop
$\g$ such that $\g_n$ converges uniformly to $\g$, then 
$$\liminf_{n\to \infty} I^L(\g_n, \g_n(0)) \geq I^L(\g, \g(0)).$$
\end{lemma}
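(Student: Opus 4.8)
The plan is to reduce the statement to the lower semicontinuity of the \emph{chordal} Loewner energy for curves of \emph{finite} capacity, and to establish the latter through a compactness argument for the driving functions.

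First I would exploit the monotone structure of the defining limit. As in Proposition~\ref{prop:loop_energy_limit_exists}, additivity shows that $\vare\mapsto I^C_{\g[0,\vare]}(\g[\vare,1])$ increases as $\vare\searrow 0$, and, since the target $\g(1)=\g(0)$ is reached only in the limit, that $\d\mapsto I^C_{\g[0,\vare]}(\g[\vare,1-\d])$ increases as $\d\searrow 0$. Hence
\[
I^L(\g,\g(0)) = \sup_{\vare>0}\ \sup_{\d>0}\ I^C_{\g[0,\vare]}(\g[\vare,1-\d]),
\]
a supremum over energies of arcs $\g[\vare,1-\d]$ that keep a definite distance from the target and therefore have finite capacity. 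The same identity holds for each $\g_n$, so $I^L(\g_n,\g_n(0))\geq I^C_{\g_n[0,\vare]}(\g_n[\vare,1-\d])$ for every fixed $\vare<1/2$ and $\d>0$. Taking $\liminf_n$ and then $\sup_{\vare,\d}$, it suffices to prove, for each such fixed $\vare,\d$, the finite-capacity lower semicontinuity
\[
\liminf_{n\to\infty} I^C_{\g_n[0,\vare]}(\g_n[\vare,1-\d]) \ \geq\ I^C_{\g[0,\vare]}(\g[\vare,1-\d]).
\]

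Next I would transport everything to $\m H$ and control the conformal maps. Let $\phi_n$ send $\Chat\setminus\g_n[0,\vare]$ onto $\m H$ with $\g_n(\vare)\mapsto 0$ and $\g_n(0)\mapsto\infty$, fixing once and for all a local chart at the common point $\g(0)=\g_n(0)$ so that the hydrodynamic normalization is unambiguous. Since $\g_n\to\g$ uniformly, the arcs $\g_n[0,\vare]$ converge to $\g[0,\vare]$ in the Hausdorff metric and the marked points converge; using the fixed interior point $\g(1/2)=\g_n(1/2)$ (which lies in every domain because $\vare<1/2$) as a common reference, the Carath\'eodory kernel theorem gives $\phi_n\to\phi$ locally uniformly. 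Because $\vare,\d$ are fixed, the arcs $\g_n[\vare,1-\d]$ stay at a definite distance from the target $\g(0)$ and, away from the common tip $\g(\vare)$, from the slit $\g[0,\vare]$; they thus lie in a fixed compact subset of $\Chat\setminus\{\g(0)\}$, and the images $\eta_n:=\phi_n(\g_n[\vare,1-\d])$ converge uniformly, as chords issuing from $0$ in $\m H$, to $\eta:=\phi(\g[\vare,1-\d])$, with capacities $T_n\to T$.

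Finally I would run the driving-function argument. Assuming the $\liminf$ is finite, pass to a subsequence realizing it and let $W_n$ be the driving function of $\eta_n$, so that $\frac12\int_0^{T_n}\dot W_n^2$ is bounded. By the $\Lip(1/2)$ estimate for finite-energy driving functions (\cite{marshall2005loewner}, \cite{lind2005sharp}) the $W_n$ are equicontinuous with $W_n(0)=0$, so Arzel\`a--Ascoli yields a further subsequence with $W_n\to\tilde W$ uniformly on $[0,T]$. The forward (robust) direction of the Loewner correspondence—convergence of driving functions forces Carath\'eodory convergence of the generated hulls, hence of the generated curves—shows that the curve driven by $\tilde W$ equals $\lim_n\eta_n=\eta$, whence $\tilde W=W$ by uniqueness of the driving function of the simple arc $\eta$. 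Since $u\mapsto\frac12\int\dot u^2$ is lower semicontinuous under uniform convergence (bounded energy gives weak $L^2$ convergence of the derivatives), we get
\[
I^C_{\g[0,\vare]}(\g[\vare,1-\d]) = \frac12\int_0^{T}\dot W^2 \ \leq\ \liminf_{n}\frac12\int_0^{T_n}\dot W_n^2 .
\]
Taking $\sup_{\vare,\d}$ then proves $\liminf_n I^L(\g_n,\g_n(0))\geq I^L(\g,\g(0))$. I expect the main obstacle to be exactly this last step: uniform convergence of the \emph{traces} does not by itself control the capacity parametrization, so one must use the energy bound to gain equicontinuity of the $W_n$ and then identify the limit through the Loewner flow. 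Keeping $\vare,\d$ fixed throughout, so that the relevant arcs remain uniformly away from the target, is what makes both the Carath\'eodory convergence of the $\phi_n$ and the compactness of the $W_n$ legitimate, and this is precisely where fixing $\g(1/2)$ and $\g(0)$ is used.
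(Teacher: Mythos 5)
Your proposal is correct and follows essentially the same route as the paper's proof: uniformize the complement of $\g_n[0,\vare]$ using the fixed points $\g(0)$ and $\g(1/2)$ to pin the normalization, get Carath\'eodory convergence of the uniformizers and hence uniform convergence of the image chords, use the energy bound to make the drivers uniformly H\"older-$1/2$, identify the subsequential limit as the driver of the limit curve via the robust direction of the Loewner correspondence, and finish with lower semicontinuity of the Dirichlet energy. The only deviations are minor: you truncate to the finite-capacity arcs $\g[\vare,1-\d]$ by a monotone double supremum where the paper works directly with the infinite-capacity chords $\g_n[\vare,1]$ and uniform-on-compacts convergence of the drivers, and your one sketched step (uniform convergence of $\eta_n$ up to the tip $\g_n(\vare)$, which locally uniform convergence of $\phi_n$ inside the domain does not by itself give) is exactly what the paper supplies via uniform local connectedness and \cite[Cor.~II.2.4]{pomm1992boundary}.
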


\begin{proof} Without loss of generality, we assume that
$$\liminf_{n\to \infty} I^L(\g_n, \g_n(0)) = A < \infty,$$
and $\sup_{n \geq 0} I^L(\g_n, \g_n(0)) = B <\infty$.

For every $0 < \vare <1/4$,
consider the family of uniformizing conformal maps $(\psi_n)_{n\geq 0}$, where $\psi_n$ maps $\Chat \setminus \g_n[0,\vare]$ to $\m H$, sending the two boundary points $\g_n(\vare)$ and $\g_n(0)$ to $0$ and  
$\infty$, respectively, and the interior point $\g_n(1/2) = \g(1/2)$ to a point of modulus $1$. 
Let $\eta_n (s)$ denote the image in $\m H$ of $\g_n (s)$ under $\psi_n$. The curve $\eta_n$ is a chord in $\m H$ connecting $0$ and $\infty$, parametrized by $[\vare,1]$. Similarly, we define $\psi$ and $\eta$ corresponding to $\g$. 

By the definition of loop Loewner energy, 
$$I^C_{\g_n[0,\vare]} (\g_n[\vare,1]) = I^C_{\m H} (\eta_n) \leq B, $$
so that 
all $\eta_n$ are quasiconformal curves with a fixed constant $K$ depending only on $B.$

By the Carath\'eodory kernel theorem, $\psi_n^{-1}$ converges uniformly on compacts of $\m H$ to $\psi^{-1}$.
In fact, since the $\g_n$ are uniformly locally connected, the convergence of $\psi_n^{-1}$ is uniform (with respect to the spherical metric) by \cite[Cor. II.2.4]{pomm1992boundary}. It follows that $\eta_n$,
viewed as $[\vare, 1]$ parametrized curves, converge uniformly to $\eta$ on every interval $[\vare, r]$ with $r<1.$
Let $\l_n$ be the capacity-parametrized driving function of $\eta_n$. We claim that $\l_n$ converges uniformly
on compacts
to the driving function of $\eta.$ To see this, notice that by \cite{marshall2005loewner} the $\l_n$ are uniformly H\"older-1/2, with constant only depending on $B.$ By \cite[Thm.~4.1, Lem.~4.2]{lind2010collisions}, every subsequential limit of $\l_n$ is the driving function of a limit of $\eta_n,$ and the only such limit is  the capacity parametrization of $\eta$. 

From the lower semicontinuity of the Dirichlet energy on driving functions we get 
$$\liminf_{n\to \infty} I^C_{\m H}( \eta_n) \geq I^C_{\m H}(\eta) = I^C_{\g[0,\vare]}(\g[\vare,1]), $$ 
which implies the claim
$$A\geq I^L(\g, \g(0)) $$
by letting $\vare$ to $0$,
since
$$ A = \liminf_{n\to \infty} I^L(\g_n, \g_n(0)) \geq \liminf_{n\to \infty} I^C_{\m H}( \eta_n).$$ 
\end{proof}

Next, we will introduce the curves that we will use to approximate a given finite energy loop. They are minimizers of loop energy among all curves that pass through a given collection of points. In Section \ref{isotopy} below, we will discuss a generalization to the setting of isotopy classes of curves.
Let $\ad{z} = (z_0, z_1, z_2,\cdots, z_n)$ be a finite collection of points in $\Chat$, $\mc{L}(\ad{z})$ be the set of Jordan curves passing through $z_0, z_1, \cdots, z_n, z_0$ in order. We say that curves in $\mc{L}(\ad{z})$ are \emph{compatible with $\ad{z}$}.
Define
$$I^L(z_0,\{\ad{z}\}) := \inf_{\g \in \mc{L}(\ad{z})} I^L(\g, z_0).$$

From \cite[Lem.~3.3]{wang2016}  we know that $I^L(z_0,\{\ad{z}\})$ is finite. 
In fact, one can easily construct a loop which is a small circular arc in a neighborhood of 
$z_0,$ has finite chordal energy, and passes through the other points in order.
We will now show that minimizers exist and are weakly $C^{1,1}$ from the regularity of its driving function.
(The mapping-out functions of energy minimizers are derived explicitly in \cite{MRW2017geodesic}, one obtains the regularity directly from it as well.)

\begin{proposition}\label{prop_optimal_loop} There exists $\g \in \mc{L}(\ad{z})$ such that $I^L(\g, z_0) =  I^L(z_0,\{\ad{z}\})$. Moreover, every such energy minimizer $\g$ is at least weakly $C^{1,1}$.
\end{proposition}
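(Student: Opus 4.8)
The plan is to obtain existence by the direct method of the calculus of variations, and then to establish the regularity by analyzing the Euler--Lagrange equation satisfied by the driving function of a minimizer, transferring regularity back to the curve via Theorem~\ref{thm_wong}.

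For existence, I would take a minimizing sequence $\g_n \in \mc{L}(\ad{z})$ with $I^L(\g_n, z_0) \to I^L(z_0, \{\ad{z}\}) =: m$, reparametrized so that $\g_n(0) = z_0$ and $\g_n(1/2) = z_1$ (the case $n = 0$ being trivial, as circles are then minimizers). Since the energies are eventually bounded, Proposition~\ref{prop_quasicircle} shows that the $\g_n$ are $K$-quasicircles for a uniform $K$, hence images of a fixed circle under a normal family of suitably normalized $K$-quasiconformal maps of $\Chat$. Extracting a uniformly convergent subsequence yields a limit $\g$ that is again a $K$-quasicircle, passes through $z_0, z_1, \ldots, z_n$ in the prescribed cyclic order (these properties pass to uniform limits of simple curves through fixed points), and therefore lies in $\mc{L}(\ad{z})$. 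Lemma~\ref{lem_lower_semicontinuity} then gives $I^L(\g, z_0) \leq \liminf_n I^L(\g_n, z_0) = m$, while $\g \in \mc{L}(\ad{z})$ forces $I^L(\g, z_0) \geq m$; hence $\g$ is a minimizer.

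For the regularity, I would first pass to the chordal picture: rooting at $z_0$ and applying the branch of $\sqrt{\cdot}$ from the Conventions, the loop $\g$ corresponds to a chord $\eta$ from $0$ to $\infty$ in $\m H$ whose capacity-parametrized driving function $W$ minimizes the Dirichlet energy $\tfrac12\int_0^\infty \dot W_t^2\,dt$ among all chords passing through the finitely many images $w_1, \ldots, w_n \in \m H$ of the marked points, at hitting times $0 < t_1 < \cdots < t_n < \infty$. By Theorem~\ref{thm_wong} it suffices to prove that $W$ is weakly $C^{1,1/2}$, so the entire task becomes the regularity of the minimizing driving function. On each open interval $(t_i, t_{i+1})$ between consecutive hitting times there is no active constraint, so $W$ is a free critical point of the Dirichlet functional subject only to the nonlinear passing-through conditions. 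Lagrange multipliers give an Euler--Lagrange relation of the form $\ddot W_t = \sum_i \lambda_i\, K_i(t)$, where each kernel $K_i$ is the first variation of the hitting constraint for $w_i$; computing $K_i$ through the linearized Loewner equation $\partial_t v_t(z) = -2\,(v_t(z) - \phi_t)\,(g_t(z) - W_t)^{-2}$ shows that $K_i$ is supported on $\{t \le t_i\}$ and is real-analytic in $t$ away from $t_i$. Bootstrapping then yields that $W$ is real-analytic strictly between the hitting times, so $\eta$ (and hence $\g$) is smooth there.

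The main obstacle is the behavior at the marked points, i.e.\ the matching of the analytic arcs across the hitting times $t_i$. Since corners carry infinite energy (Section~\ref{sec_chordal_energy}), the finite-energy minimizer cannot develop a corner, so $\eta$ at least has a continuous tangent at each $w_i$; the delicate point is the precise modulus of continuity of the curvature there. As $t \to t_i$ one has $g_t(w_i) - W_t \to 0$, so the kernel $K_i$ degenerates at $t_i$ and the associated Lagrange multiplier injects exactly a borderline singularity (with a logarithmic correction) into $\ddot W$ near $t_i$. Controlling this carefully should give that $W$ is $C^1$ with $\dot W$ of weakly $C^{0,1/2}$ modulus across each $t_i$ — that is, $W$ is weakly $C^{1,1/2}$ and no better — whence Theorem~\ref{thm_wong} delivers that $\g$ is weakly $C^{1,1}$. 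I expect this boundary analysis at the constraint points, rather than the interior smoothness or the existence argument, to be the crux, and it is precisely the borderline case $\b = 1$ of Theorem~\ref{thm_main} reappearing on the curve side.
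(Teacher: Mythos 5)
Your existence argument is essentially the paper's own proof: a minimizing sequence of uniform-$K$ quasicircles via Proposition~\ref{prop_quasicircle}, a normal family of normalized quasiconformal maps of $\Chat$, and lower semicontinuity via Lemma~\ref{lem_lower_semicontinuity}; this half is fine (modulo the routine check, which the paper also leaves implicit, that compatibility with $\ad{z}$ survives the uniform limit because the marked points are distinct and the limit map is injective).

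The regularity half, however, has genuine gaps. First, your reduction to a single constrained Dirichlet minimization over one chordal driving function $W$ on $[0,\infty)$ is unjustified: within this paper, $I^L(\g, z_0)$ is \emph{defined} as a monotone limit of chordal energies (Proposition~\ref{prop:loop_energy_limit_exists}), and the $\sqrt{\cdot}$ correspondence from the introduction identifies a loop energy with a single chordal energy only for loops of the special form $\m R_+ \cup \eta^2$, i.e.\ loops containing a fixed boundary ray; a general minimizer through $\ad{z}$ is not of this form, and no single driving function computes its rooted energy. Second, even granting such a formulation, the Euler--Lagrange step is only asserted: you would need admissible variations (perturbed drivers that still generate \emph{simple} curves passing through the $w_i$ at free hitting times), differentiability of these nonlinear, nonlocal constraints, and a bootstrap for the flow-dependent kernels $K_i$ --- and you explicitly leave the crux, the modulus of continuity of $\dot W$ across the $t_i$, as an expectation. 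Third, that expectation is wrong in a telling way: by the explicit computations of \cite{wang2016} and the geodesic-pair classification of \cite{MRW2017geodesic}, the minimizer's driving function is genuinely $C^{1,1/2}$, with no logarithmic correction; the ``weakly'' enters only on the curve side through the parenthetical in Theorem~\ref{thm_wong} at $\b=1/2$. Moreover, Theorem~\ref{thm_wong} as stated applies to $C^{1,1/2}$ drivers, not weakly $C^{1,1/2}$ ones, so your final step is unavailable even on your own terms. The paper's proof sidesteps all of this with a structural observation you are missing: by additivity and Corollary~\ref{cor_two_slits}, each arc of a minimizer between consecutive marked points must be the hyperbolic geodesic in the complement of the rest of the loop (otherwise replacing it strictly decreases the energy), so consecutive arcs form a geodesic pair; the classification of geodesic pairs leaves only logarithmic spirals (excluded, since they have infinite energy by self-similarity) or the explicit one-point energy minimizers of \cite{wang2016}, whose $C^{1,1/2}$ drivers yield the weak $C^{1,1}$ trace via \cite[Thm.~5.2]{wong2014}. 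Your variational route might be made to work, but as proposed it fails at the reduction step and leaves the decisive boundary analysis unproved.
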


\begin{proof} We first prove the existence. When $\ad{z}$ has no more than $3$ points, a circle passing through all points is a minimizer of the energy. 
Now assume that $\ad{z}$ has more than $3$ points. 
Let $(\g_n)$ be a sequence of finite energy loops compatible with $\ad{z}$ whose energy rooted at $z_0$ converges to $I^L(z_0,\{\ad{z}\})$. 
Let $A$ be the supremum of their energies. Then all $\g_n$ are $K(A)$-quasicircles for some constant $K \geq 1$ due to Proposition~\ref{prop_quasicircle}. 
Let $\varphi_n$ be a $K(A)$-quasiconformal map such that $\varphi_n(S^1) = \g_n$ and $\varphi_n(e^{2i\pi k /3}) = z_k$ for $k = 0,1,2$.
We obtain a normal family of quasiconformal maps which converges uniformly on a subsequence to some $\varphi$. 
In particular, along this subsequence, $\g_n$ converges uniformly to $\g=\varphi(S^1)$ viewed as a curve parametrized by $S^1$. 
From Lemma~\ref{lem_lower_semicontinuity}, we have
$$I^L(z_0,\{\ad{z}\}) = \liminf_{n\to \infty} I^L(\g_n, z_0) \geq I^L(\g, z_0).$$
Since $\g$ is compatible with $\ad{z}$, it is a minimizer in $\mc{L} (\ad{z})$.

To obtain the regularity of $\g$, notice that $\g$ has the following remarkable property:

For $i \in \{0,1,\cdots, n\}$, $z_i$ and $z_{i+1}$  split $\g$ into two arcs $a_{i,1}$ and $a_{i,2}$, where $a_{i,1}$ does not contain other points than $z_{i}$ and $z_{i+1}$ (we set $z_{n+1} = z_0$).   
It is not hard to see that $a_{i,1}$ is the hyperbolic geodesic in the complement of $a_{i,2}$: Otherwise we could replace $a_{i,1}$ by the hyperbolic geodesic, since
$$I^L(\g, z_0) = I^A(a_{i,2}, z_0) + I^C_{a_{i,2}} (a_{i,1})$$
by Corollary \ref{cor_two_slits}.
Thus $a_{i,1} \cup a_{i+1,1}$ is a \emph{geodesic pair} in the simply connected domain $D = \Chat\setminus (a_{i,2}\cap a_{i+1,2})$ between the two marked boundary points $z_i$ and $z_{i+2}$ and passing through $z_{i+1}$, namely $a_{i,1}$ is the hyperbolic geodesic in $D \setminus a_{i+1,1}$ between $z_i$ and $z_{i+1}$, and $a_{i+1,1}$ is the hyperbolic geodesic in $D \setminus a_{i,1}$ between $z_{i+1}$ and $z_{i+2}$. 
Such geodesic pairs have been characterized in \cite{MRW2017geodesic}, and we know that either $a_{i,1} \cup a_{i+1,1}$ form a logarithmic spiral at $z_{i+1}$, or it is the energy minimizing chord in $(D,z_i,z_{i+2})$ passing through $z_{i+1}$. 
In \cite{wang2016}, minimizers are identified and by explicit computation, it is not hard to see that their driving function is $C^{1,1/2}$ which implies weak $C^{1,1}$ trace (see \cite[Thm.~5.2]{wong2014}).
Only the latter case is possible for a minimizing loop $\g$ with constraint $\ad{z}$, as the logarithmic spirals have infinite energy as can be seen by using their self-similarity. 
\end{proof}

To keep this paper self-contained, we outline a proof of the classification of geodesic pairs, and refer to \cite{MRW2017geodesic} for details: 
Assume that $\eta_1$ and $\eta_2$ are two curves in a simply connected domain $D$, forming a geodesic pair through a point $A \in D$. Let $B$ be the boundary point of $D$ on $\eta_2$. The pair separates $D$ into two domains $H_{+}$ and $H_{-}$. Let $R_i$ be the conformal reflection 
in $\eta_i$, which is well-defined in $D\setminus \eta_{i+1}$  ($ i\in \m Z_2$).
Define $F (z)  = R_2 \circ R_1 (z)$ in $H_{+}$, and note that $F$ is a conformal automorphism of $H_{+}$ fixing the boundary point $A.$
From the map $F$ one can recover the welding functions of 
$\eta_1$ and of $\eta_2$ 
as follows: 
Let $\varphi$ be a conformal map from $D \setminus \eta_1$ to $\Chat \setminus \m R_-$ such that $\varphi (A) = \infty$, $\varphi(B)$ = 0. Assume without loss of generality that
$\varphi(H_+)= \m H$.
From the geodesic property,  $\varphi (\eta_2) =\m R_+$. 
The map $ g:= \varphi \circ F \circ \varphi^{-1}|_{\m H}$ defined on the upper half-plane is a M\"obius map fixing $\infty$, hence $$g(x) = ax + b,\quad \text{where } a,b \in \m R \quad \text{and } a>0.$$ 
Moreover, if $[-\infty, -t]$ is the image of $\eta_1 \subset \partial H$ under $\varphi$, it is not hard to see that $g|_{[-\infty, -t]}$ is the welding map of $\eta_1$. Indeed, denoting by $\varphi_+$ resp. $\varphi_-$ the restrictions of $\varphi$ to $H_+$ resp. $H_-,$ we have
$$\varphi_+^{-1} (x) = \varphi_-^{-1} \circ g (x), \quad \forall x \in (-\infty, -t].$$
Since the welding determines the curve (up to conformal change of coordinates), it is then not hard to see that we have the following dichotomy: 
\begin{enumerate}
\item $a = 1$ corresponds to the minimal energy curve in $D$ passing through $A$. See \cite[Sec.~3.2]{wang2016} and the simulation by Brent Werness in Figure~\ref{fig_geo_pair}. 
\item $a \neq 1$ corresponds to a geodesic pair with a logarithmic spiral at $A$.
\end{enumerate}

   \begin{figure}[ht]
 \centering
 \includegraphics[width=0.4\textwidth]{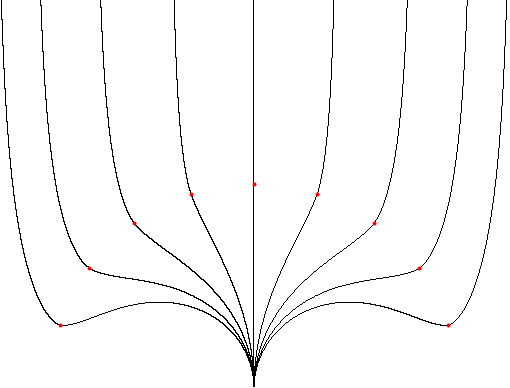}
 \caption{\label{fig_geo_pair} Finite energy geodesic pairs in $\m H$ between $0$ and $\infty$ passing through different points on the unit circle. Simulation by Brent Werness.} 
 \end{figure}

The following  corollary is an immediate consequence of Propositions~\ref{prop_regular_invariant} and \ref{prop_optimal_loop}:

\begin{corollary} \label{cor_minimal_energy} If $\g$ minimizes the energy rooted at $z_0$ among all loops in $\mc{L}(\ad{z})$, then its energy is root-invariant. 
Therefore it also minimizes the energy rooted at $z_k$ for $k \in \{1,\cdots, n\},$ and
$I^L(z_k,\{\ad{z}\}) = I^L(z_0,\{\ad{z}\})$. 
\end{corollary}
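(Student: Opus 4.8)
The plan is to deduce the statement formally from the two cited results: the regularity of energy minimizers (Proposition~\ref{prop_optimal_loop}) and the root-invariance of sufficiently regular loops (Proposition~\ref{prop_regular_invariant}), combined in a symmetric two-sided comparison of the minimal energies. First I would record a bookkeeping observation that makes the symmetry available: if $\ad{z}^{(k)} = (z_k, z_{k+1}, \dots, z_n, z_0, \dots, z_{k-1})$ denotes the collection $\ad{z}$ cyclically relabeled to start at $z_k$, then $\mc{L}(\ad{z}^{(k)})$ and $\mc{L}(\ad{z})$ consist of exactly the same Jordan curves, since they pass through the same points in the same cyclic order. Hence $I^L(z_k,\{\ad{z}\}) = \inf_{\g \in \mc{L}(\ad{z})} I^L(\g, z_k)$, and every statement of Proposition~\ref{prop_optimal_loop} applies verbatim with $z_k$ in place of $z_0$.

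Next I would use the given minimizer $\g$ to get one inequality. By Proposition~\ref{prop_optimal_loop}, $\g$ is at least weakly $C^{1,1}$; in particular $\g$ is $C^{1,\b}$ for some $\b \in (1/2,1)$, so Proposition~\ref{prop_regular_invariant} applies and gives $I^L(\g, z_k) = I^L(\g, z_0) = I^L(z_0,\{\ad{z}\})$ for every $k$. Since $\g \in \mc{L}(\ad{z})$, taking the infimum over the family yields $I^L(z_k,\{\ad{z}\}) \le I^L(\g, z_k) = I^L(z_0,\{\ad{z}\})$.

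I would then close the argument by symmetry. Let $\g_k \in \mc{L}(\ad{z})$ be a minimizer rooted at $z_k$, which exists by Proposition~\ref{prop_optimal_loop} (via the re-rooting observation) and is again weakly $C^{1,1}$, hence root-invariant by Proposition~\ref{prop_regular_invariant}. This gives the reverse inequality $I^L(z_0,\{\ad{z}\}) \le I^L(\g_k, z_0) = I^L(\g_k, z_k) = I^L(z_k,\{\ad{z}\})$. Combining the two inequalities produces $I^L(z_k,\{\ad{z}\}) = I^L(z_0,\{\ad{z}\})$; together with the first paragraph's identity $I^L(\g, z_k) = I^L(z_0,\{\ad{z}\})$ this shows $\g$ achieves the minimal value rooted at $z_k$, i.e.\ $\g$ minimizes the energy rooted at every $z_k$ as well.

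Since the corollary is a direct consequence of the two propositions, I do not anticipate a serious analytic obstacle. The one point that warrants explicit care is verifying that the weak $C^{1,1}$ regularity furnished by Proposition~\ref{prop_optimal_loop} is strong enough to invoke Proposition~\ref{prop_regular_invariant}, whose hypothesis is phrased as $C^{1,\b}$ with $\b > 1/2$: because weak $C^{1,1}$ is more regular than $C^{1,\b}$ for every $\b < 1$, it suffices to pick any $\b \in (1/2,1)$. The invariance of the family $\mc{L}(\ad{z})$ under cyclic relabeling of the marked points is the other detail I would state explicitly, as it is what legitimizes applying the existence and regularity of minimizers at each root.
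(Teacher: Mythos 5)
Your proposal is correct and follows essentially the same route as the paper, which states Corollary~\ref{cor_minimal_energy} as an immediate consequence of Propositions~\ref{prop_optimal_loop} and~\ref{prop_regular_invariant}: a minimizer exists at each root, is weakly $C^{1,1}$ and hence $C^{1,\b}$ for any $\b \in (1/2,1)$, so its energy is root-invariant, and the two-sided comparison of infima follows. The details you make explicit --- the invariance of $\mc{L}(\ad{z})$ under cyclic relabeling and the embedding of weak $C^{1,1}$ into $C^{1,\b}$ --- are exactly the points the paper leaves implicit, and both are verified correctly.
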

Theorem~\ref{thm_main_2} is then an immediate consequence of Corollary~\ref{cor_minimal_energy} and the following 

\begin{proposition} \label{prop_approximation_loop} Let $\g$ be a Jordan curve. The energy of $\g$ rooted at $\g(0)$ is the supremum of $I^L(z_0, \{\ad{z}\})$, where $\ad{z}$ is taken over all finite collections of points on $\g$ which are compatible with $\g$ and have $z_0 = \g(0)$. 
\end{proposition}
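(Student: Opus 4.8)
The plan is to prove the two inequalities between $I^L(\g,\g(0))$ and
$\sigma := \sup_{\ad z} I^L(z_0,\{\ad z\})$, the supremum being over finite collections $\ad z=(z_0,\dots,z_n)$ of points lying on $\g$ in cyclic order with $z_0=\g(0)$. The inequality $\sigma\leq I^L(\g,\g(0))$ is immediate: for any such $\ad z$ the curve $\g$ itself belongs to $\mc L(\ad z)$, since it is a Jordan curve passing through $z_0,\dots,z_n,z_0$ in order, whence $I^L(z_0,\{\ad z\})=\inf_{\g'\in\mc L(\ad z)}I^L(\g',z_0)\leq I^L(\g,z_0)=I^L(\g,\g(0))$. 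Taking the supremum over $\ad z$ gives the claim.

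For the reverse inequality $I^L(\g,\g(0))\leq\sigma$, assume $\sigma<\infty$ (otherwise there is nothing to prove). I would choose the nested dyadic collections $\ad z^{(n)}=(\g(k/2^n))_{0\leq k<2^n}$, whose union is dense in $\g$ and which all contain the root $\g(0)$ and the auxiliary point $\g(1/2)$. Let $\g_n$ be an energy minimizer in $\mc L(\ad z^{(n)})$, which exists by Proposition~\ref{prop_optimal_loop}, so that $I^L(\g_n,z_0)=I^L(z_0,\{\ad z^{(n)}\})\leq\sigma$. Since all $\g_n$ have energy bounded by $\sigma$, Proposition~\ref{prop_quasicircle} makes them uniformly $K$-quasicircles with $K=K(\sigma)$. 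Exactly as in the proof of Proposition~\ref{prop_optimal_loop}, I would write $\g_n=\varphi_n(S^1)$ for $K$-quasiconformal maps normalized at three points of $\ad z^{(n)}$, extract a subsequence with $\varphi_n\to\varphi$ uniformly, and deduce that $\g_n$ converges uniformly (as sets) to the quasicircle $\g':=\varphi(S^1)$.

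The next step is to identify $\g'$ with $\g$. Every point of $\ad z^{(n)}$ lies on $\g_m$ for all $m\geq n$ (the collections are nested), so by uniform convergence each such point lies on $\g'$; hence $\g'\supseteq\overline{\bigcup_n\ad z^{(n)}}=\g$. Since both are Jordan curves and a Jordan curve cannot properly contain another one, $\g'=\g$. After reparametrizing each $\g_n$ (fixing the root and leaving the energy unchanged) so that $\g_n(0)=\g(0)$ and $\g_n(1/2)=\g(1/2)$ — possible because both points lie on every $\g_n$ — the uniform set-convergence upgrades to uniform convergence of parametrized loops, and Lemma~\ref{lem_lower_semicontinuity} yields
$$I^L(\g,\g(0))\leq\liminf_{n\to\infty}I^L(\g_n,z_0)\leq\sigma,$$
which completes the argument.

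I expect the main obstacle to be the passage from uniform convergence of the quasiconformal maps (equivalently, of the $\g_n$ as \emph{sets}) to the uniform convergence of \emph{parametrized} loops required by Lemma~\ref{lem_lower_semicontinuity}. Concretely, one must check that the marked point $\g(1/2)$ can be pinned consistently along the sequence: writing $\theta_n:=\varphi_n^{-1}(\g(1/2))$, one needs $\theta_n\to\varphi^{-1}(\g(1/2))$ (which follows since $\varphi_n\to\varphi$ uniformly and $\varphi$ is a homeomorphism with $\varphi_n^{-1}\to\varphi^{-1}$ uniformly), so that a matching reparametrization of $S^1$ keeps the convergence uniform while fixing both $\g(0)$ and $\g(1/2)$. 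The topological identification $\g'=\g$ and this parametrization bookkeeping are the only delicate points; everything else reduces to the quasicircle compactness and lower-semicontinuity already established.
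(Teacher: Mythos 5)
Your proposal is correct and follows essentially the same route as the paper: minimizers over a nested dense sequence of point collections, uniform quasicircle constants via Proposition~\ref{prop_quasicircle}, normal-family extraction of the quasiconformal parametrizations, identification of the limit with $\g$ by density, and Lemma~\ref{lem_lower_semicontinuity}. Your choice of dyadic collections (rather than tuples whose energies approach the supremum, as in the paper) is an immaterial simplification, since the bound $I^L(\g_n,z_0)\leq\sigma$ already suffices for the liminf argument, and your explicit treatment of the Jordan-curve identification and the parametrization bookkeeping just fills in details the paper leaves terse.
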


\begin{proof}
Let $A$ denote the supremum of such $I^L( z_0,\{\ad{z}\})$. It is obvious that $A \leq I^{L}(\g, \g(0))$. Now we assume that $A < \infty$.

Let $(\ad{z^n})_{n \in \m N}$ be a sequence of increasing $(n+3)$-tuples of points (\ie a point in $\ad{z^n}$ is also in $\ad{z^{n+1}}$), such that the union of points in the sequence is a dense subset of $\g$, $\ad{z^0} = (\g(0), \g(1/3),\g(2/3))$, and the increasing sequence $I^L(z_0,\{\ad{z^n}\})$ converges to $A$.

Let $\g_n$ be a minimizer of the energy (independent of the root due to Corollary~\ref{cor_minimal_energy}) in $\mc{L}(\ad{z^n})$, all of them pass through $\g(0), \g(1/3)$ and $\g(2/3)$.  
Proposition~\ref{prop_quasicircle} tells us that $\g_n$ are all $K$-quasicircle, where $K$ is independent of $n$. 
Let $\varphi_n$ be a $K$-quasiconformal map of $\Chat$ such that $\g_n = \varphi_n(S^1)$ as subsets of $\Chat$. 
By pre-composing with a M\"obius map, we assume that $\varphi_n(\exp( 2 i \pi k/3)) = \g(k/3)$ for all $n \geq 0$ and $k = 0,1,2$. 
Hence $(\varphi_n)_{n\geq 0}$ is a normal family (see e.g.  \cite[Thm.~2.1]{lehto2012univalent}), and a subsequence of $\varphi_n$ converges uniformly to a $K$-quasiconformal map $\varphi$ with respect to the spherical metric. The limiting curve $\g$ passes through all points in $\ad{z^n}$ for every $n$. From the density of points in the union of $\ad{z^n}$, $\varphi(S^1) = \g$.

From Lemma~\ref{lem_lower_semicontinuity}, $I^L(\g, \g(0)) \leq \liminf_{n\to \infty} I^L(\g_n, \g(0)) =  A$ which concludes the proof.
\end{proof}

\section{Proof of Theorem~\ref{thm_main}} \label{sec_regularity}

In this section we prove Theorem~\ref{thm_main}, which was an important tool in our proof of the root-invariance of the Loewner energy. It also is of independent interest, since it gives the optimal regularity of the driving function of an $C^{1,\b}$ curve in most of the cases, see Section~\ref{subsec_sharpness}. 

In Section~\ref{sec_reg_h} we study the regularity of the mapped-out curve, the main results are Corollary~\ref{cor_1_1.5} (for $\b \in (0,1/2]$) and Corollary~\ref{cor_1.5_2} (for $\b \in (1/2, 1]$), which state that up to a Mobius transform in the latter case, the mapped-out curve has the same regularity as the initial curve. 
Therefore it suffices to study the displacement of the Loewner driving function for small times and we see the $1/2$-shift in the regularity (Section~\ref{sec_driving_init}). 
We complete the proof of Theorem~\ref{thm_main} in Section~\ref{sec_proof_1}.

\subsection{Notations}\label{sec_prelim}

Fix $n\in \m N$ and $0< \b \leq 1$. A function $f : I \to \m R$ is $C^{n,\b}$ if there is $C>0$ such that the modulus of continuity $\o(\d; f^{(n)})$ of $f^{(n)}$  on the interval $I$ is bounded by $C \d^\b$ for $\d \leq 1/2$, where 
$$\o (\d; g) = \sup_{\abs{s -s'} \leq \d} |g(s) - g(s')|.$$ 
We denote $\norm{f}_{n,\b}$ the smallest such $C$. When $\b = 0$, the class $C^{n,0}$ corresponds to continuous $f^{(n)}$.

A function $f$ is said to be \emph{weakly} $C^{n,\b}$  if there is $C>0$ such that for all $\d \leq 1/2$,
$$\o(\d; f^{(n)}) \leq C \d^\b \log(1/\d).$$
Sometimes we also write $C^{\a}$ when $\a > 1$, as in Theorem~\ref{thm_lind_tran} above. This stands for $C^{n,\b}$, where $n$ is the largest integer less than or equal to $\a$, and $\b = \a - n$. 

Throughout Section~\ref{sec_regularity}, $\g$ is a \emph{$C^{1,\b}$ arclength-parametrized simple curve tangentially attached to $\m R_+$} for some $\b \in (0,1]$, that is an injective $C^{1,\b}$ function $\g: [0,S] \to \m C \setminus \m R_{+}^*$, such that $\g(0) = 0$, $\g'(0) = -1$ and $\abs{\g'(s)} = 1$ for all $s\in [0,S]$. 
We abbreviate  $\o(\d, \g')$ to $\o(\d)$.

\begin{figure}[ht]
\centering
\includegraphics[width=0.8\textwidth]{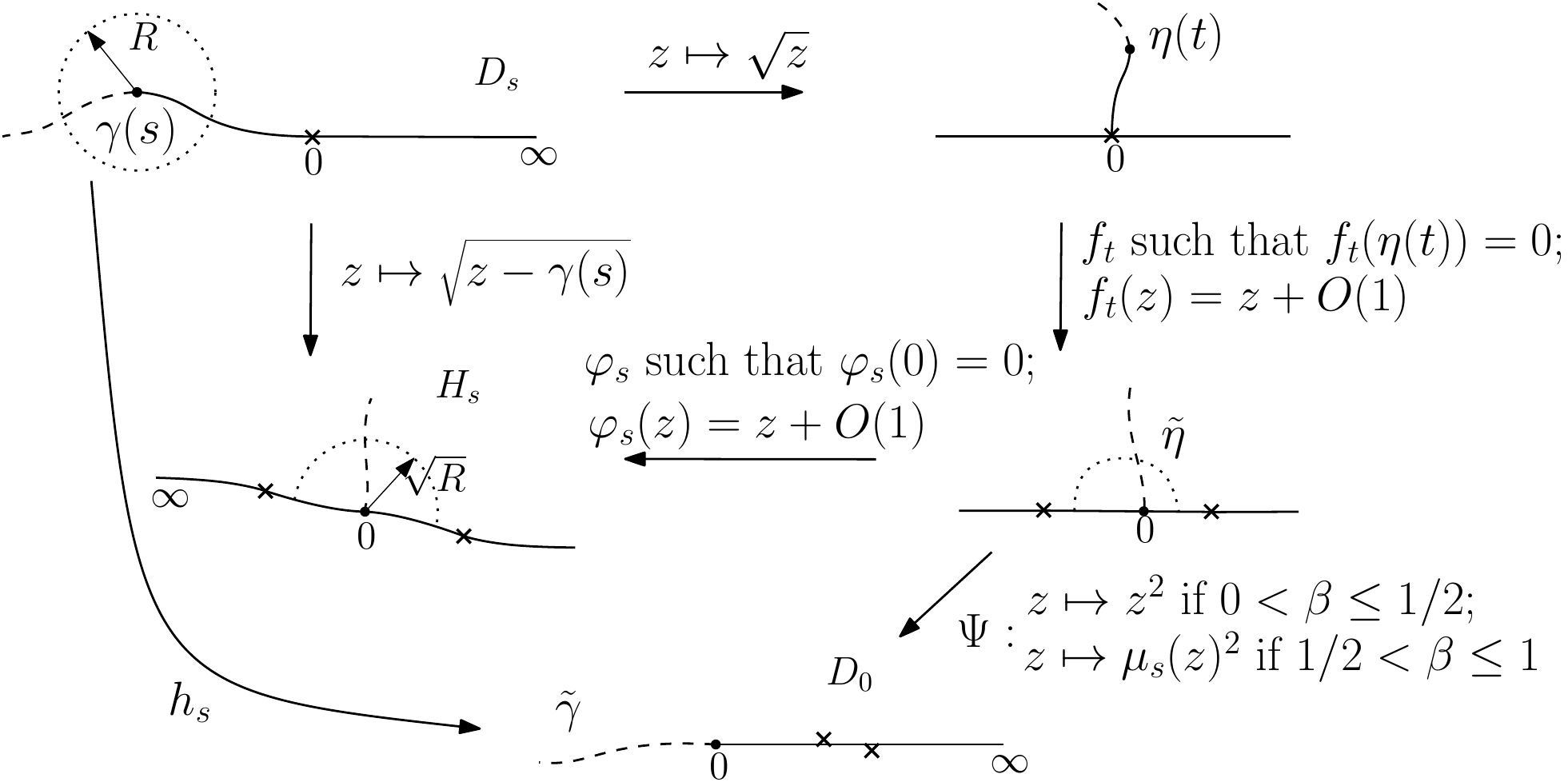}
\caption{\label{figure1} Illustration of different maps considered in Section~\ref{sec_regularity}. We define the map $\Psi$ according to the value of $\b$, and $\mu_s$ is the M\"obius function defined in Corollary~\ref{cor_1.5_2}.} 
\end{figure}

Maps and domains that we use frequently are illustrated in Figure~\ref{figure1}, where 
\begin{itemize}
\item $D_s$ denotes the slitted sphere $\m C \setminus (\g[0,s] \cup \m{R}_+)$;
\item $H_s$ is the image of $D_s$ under $z \to \sqrt{z- \g(s)}$; 
\item $z\mapsto \sqrt{z}$ maps $\g[0,S]$ to a slit $\eta$ in the upper half plane $\m{H}$; 
\item $t= t(s)$ is the half-plane capacity parametrization of $\eta$, that is 
$$cap(\sqrt{\g[0,s]}) = cap(\eta[0,t(s)]) = 2t(s),$$
where the mapping-out function $g_t$ of $\eta[0,t(s)]$ satisfies 
$$g_t(z) = z + 2t(s)/ z + o(1/z);$$
\item $(W_t)_{0 \leq t \leq T}$ is the Loewner driving function of $\eta$ and $T= t(S)$;
\item we also write $\g(-s) = s$ and $W_{-s} = 0$ for $s\geq 0$;
\item $\Psi(z)$ is defined as $z^2$, if $\b \leq 1/2$ and $\mu_s(z)^2$ if $\b > 1/2 $, where $\mu_s$ is a well-chosen M\"obius map (Corollary~\ref{cor_1.5_2});
\item the sphere mapping-out function $h_s(z)$ is given by $\Psi \circ f_t (\sqrt z)$;
\item $\tilde \g$ is the image of $\g[s,S]$ by $h_s$;
\item let $\varphi_s: \m{H} \to H_s$ be the conformal map such that $\varphi_s (0) = 0$ and $\varphi_s(z) = z + O(1)$ as $z \to \infty$.
 \end{itemize}
 The existence and uniqueness of $\varphi_s$ are discussed in Lemma~\ref{lem_u}.
This map is closely related to the centered mapping-out function $f_t : \m{H} \setminus \eta[0,t] \to \m{H}$, that is
\begin{equation}\label{eq_centered_mapping_out}
f_t (z) = g_t(z) - W_t = \varphi_s^{-1} \left (\sqrt{z^2 -\g(s)} \right),
\end{equation}
where $t = t(s)$.
Indeed, it suffices to check that $f_t(\eta(t)) = 0$, and $f_t(z) = z + O(1)$ as $z \to \infty$ which is straightforward.

Regarding the global geometry of $\g$, we assume that there exists $R>0$ such that for all $s \in [0,S]$ and  for all $r \leq R$, the intersection of the disc of radius $r$ centered at $\g(s)$ with $\g(-\infty,S]$ is connected (Figure~\ref{fig_global}).  Intuitively, this rules out bottle-necks of scale less than $R$.
 By taking perhaps a smaller $R$, we assume that $\o(R) \leq 1/5$ and $R \leq 1/2$ (so that our bound for $\o(\d)$ applies for all $\d \leq R$). Using the compactness of $\g[0,S]$, such $R$ can always be found if $\g$ is $C^1$, and we say that $\g$ is \emph{$R$-regular}.

\begin{figure}
\centering
\includegraphics[width=0.6\textwidth]{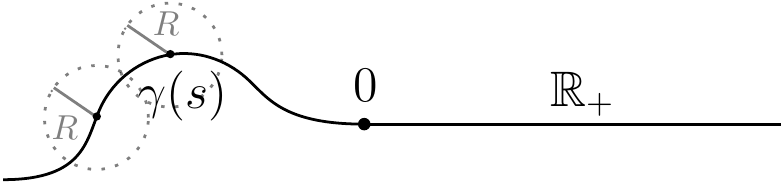}
\caption{\label{fig_global} $C^1$ curve $\g$ without bottle-necks $\leq R$.} 
\end{figure}

\subsection{Regularity of mapped-out curves.} \label{sec_reg_h}

The main goal of this section is to study the regularity of the image of $\g$ under the function $h_s$. It is proven in Corollary~\ref{cor_1_1.5} and Corollary~\ref{cor_1.5_2} that, apart from a minor difference when the regularity is an integer, $\g \cup \m R_+$ and $h_s(\g[s,S]) \cup \m R_+$ are in the same class of regularity modulo a M\"obius transform $\mu_s$ when $\b > 1/2$. Notice that the only non-trivial part of the proof is the regularity of the new curve near the image $0$ of the tip $\g(s)$.

One of our main tools is the Kellog-Warschawski theorem. Roughly speaking, it states that
the conformal parametrization of a smooth Jordan curve (that is, the boundary extension of 
a conformal map of the disc onto the interior of the curve) 
has the same regularity as the arc-length parametrization of the curve, see 
for instance \cite{pomm1992boundary} or \cite{GM}. We also need to keep track of the $C^{1,\b}$-norm of the extension, and
this norm depends not only on the local regularity of the curve, but also on a global property
(roughly speaking, the absence of bottle-necks, which can be quantified for instance by the quasidisc-constant). 
To give a precise statement, define the chord-arc constant of a Jordan curve $\g$ as
$$c_1(\g) = \sup_{z,w\in\g} \frac{\ell(\g(z,w))}{|z-w|},$$
where $\ell$ denotes length and $\g(z,w)$ is the subarc of $\g$ from $z$ to $w$ (in case of a closed Jordan curve, $\g(z,w)$ is the shorter of the two arcs). 
Note that the chord-arc constant $c_1(\g(-\infty,S])$ is bounded in terms of $R,S$ and $\norm{\g}_{1,\b}$: If
$|z-w|$ is small and $\ell(\g(z,w))/|z-w|$ large, then $\g(z,w)\cap D_r(z)$ cannot be connected for suitable $r$.

The following quantitative version is a combination of results from \cite{war1932}(``Zusatz 1 zum Satze 10'', inequality (10,16), p. 440, and ``Zusatz zu Satz 11'', p. 451).

\begin{theorem} \label{war}
If $f$ is a conformal map of the unit disc $\m{D}$ onto the interior domain of a Jordan curve $\g,$ if $D,\ell, c_1, K, \rho$
and $0<\alpha<1$ 
are such that $\diam\ \g\leq D, \ell(\g)\geq\ell$, the chord-arc constant $c_1(\g)\leq c_1,$  $\dist(f(0),\g)\geq \rho$, and
$\o(\d,\arg \g')\leq K \d^\alpha$ for its arc-length parametrization, then there are constants $\mu_1, \mu_2$ and $L$ depending only on $D,\ell, c_1, K, \rho$ and $\alpha$ 
such that
$$\mu_1 \leq |f'(z)| \leq \mu_2 \quad\text{for all}\quad z\in\overline{\m{D}}$$
and 
$$|f'(z)-f'(w)| \leq L |z-w|^\alpha \quad\text{for all}\quad z,w\in\overline{\m{D}}.$$
\end{theorem}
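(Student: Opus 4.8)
The plan is to reduce the statement to the boundary behaviour of $\log f'$ and then to track how each listed geometric quantity enters the constants; this bookkeeping is precisely what Warschawski's cited estimates carry out, so the asserted combination follows once the logical skeleton is assembled. Since $f$ is conformal on the simply connected domain $\m D$ with $f' \neq 0$, the function $\log f' = \log|f'| + i\arg f'$ is holomorphic, and its real and imaginary parts are harmonic conjugates. Writing $z = e^{i\theta}$ on the boundary, the forward tangent to $\g$ at $f(e^{i\theta})$ has argument $\arg f'(e^{i\theta}) + \theta + \pi/2$, so the boundary values of the harmonic function $\arg f'$ are $\arg f'(e^{i\theta}) = \tau(s(\theta)) - \theta - \pi/2$, where $\tau = \arg \g'$ is the tangent angle in the arc-length variable $s$ and $\theta \mapsto s(\theta)$ is the boundary correspondence. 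This identity is the hinge of the whole argument.

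First I would record a two-sided bound $\mu_1 \leq |f'| \leq \mu_2$ on $\overline{\m D}$. A preliminary qualitative input (Kellogg's theorem, or Warschawski's Dini version) guarantees that $f'$ extends continuously and without zeros to $\overline{\m D}$, which removes the apparent circularity below by supplying continuity \emph{before} any Hölder modulus is extracted. The numerical bounds themselves come from distortion theory: the Koebe theorem controls $|f'(0)|$ from above and below in terms of $\rho = \dist(f(0),\g)$ and $D = \diam \g$, while the chord-arc hypothesis $c_1(\g) \leq c_1$ forces $\g$ to be a quasidisc with quasisymmetry constants controlled by $c_1$, preventing the correspondence $s(\theta)$ from degenerating. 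Since $s'(\theta) = |f'(e^{i\theta})|$, the two-sided bound makes $\theta \mapsto s(\theta)$ bi-Lipschitz with constants depending only on $\mu_1, \mu_2$.

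Next, the tangent-angle hypothesis $\o(\d, \tau) \leq K\d^\alpha$ in arc-length, composed with the bi-Lipschitz correspondence, shows that $\theta \mapsto \tau(s(\theta)) - \theta$, and hence the boundary function $\arg f'$, is $\alpha$-Hölder with constant depending only on $K, \mu_1, \mu_2, \alpha$. I would then invoke the boundedness of the conjugation (Hilbert) operator on $C^\alpha(\partial \m D)$ for $0 < \alpha < 1$, whose norm depends only on $\alpha$ (Privalov): since $\log|f'|$ is the harmonic conjugate of $\arg f'$, it too is $\alpha$-Hölder on the boundary, up to the additive constant $\log|f'(0)|$ already pinned down in the previous step. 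Consequently $\log f'$, and therefore $f'$, has $C^\alpha$ boundary values that are bounded and bounded away from $0$, which simultaneously fixes $\mu_1, \mu_2$ and yields the boundary Hölder estimate. Propagating this to the closed disc is then a standard Poisson-integral estimate: the holomorphic extension of $C^\alpha$ boundary data is $C^\alpha$ on $\overline{\m D}$, giving $|f'(z) - f'(w)| \leq L|z-w|^\alpha$ with $L$ depending only on the listed data.

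The main obstacle, and the step that consumes all of the global geometry, is the uniform two-sided control of $|f'|$ and of the boundary correspondence in terms of $D, \ell, c_1, \rho$ alone, rather than on the abstract map $f$. This is exactly what the chord-arc constant (excluding bottlenecks) and the distance $\rho$ (excluding degeneration of $f(0)$ toward the boundary) are there to guarantee, and it is the content of Warschawski's inequality $(10,16)$ together with the two Zusätze; tracking these quantities through the conjugation step is what renders every constant a function of $D, \ell, c_1, K, \rho, \alpha$ only.
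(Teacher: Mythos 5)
The paper does not actually prove this statement: it presents Theorem~\ref{war} as a direct quantitative combination of Warschawski's results (``Zusatz 1 zum Satze 10'', inequality (10,16), and ``Zusatz zu Satz 11'' of \cite{war1932}) with no argument of its own. Your reconstruction of the underlying Kellogg--Warschawski mechanism therefore goes beyond what the paper supplies, and the skeleton you assemble --- the identity $\arg f'(e^{i\theta}) = \tau(s(\theta)) - \theta - \pi/2$, Privalov's theorem that conjugation is bounded on $C^\alpha(\partial \m D)$ for $0<\alpha<1$, and the Hardy--Littlewood propagation of H\"older boundary data to $\overline{\m D}$ --- is indeed the correct classical machinery.

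As a self-contained proof, however, it has a genuine quantitative gap exactly at the step you yourself flag as the main obstacle. Your chain runs: two-sided bound on $|f'|$ $\Rightarrow$ $s(\theta)$ bi-Lipschitz $\Rightarrow$ $\arg f'$ is $\alpha$-H\"older on $\partial\m D$ $\Rightarrow$ (Privalov) $\log|f'|$ is $\alpha$-H\"older and bounded $\Rightarrow$ the bounds $\mu_1,\mu_2$. This is circular, and the appeal to qualitative Kellogg-type continuity of $f'$ does not break it: continuity on $\overline{\m D}$ carries no bound expressible in $D,\ell,c_1,K,\rho,\alpha$ --- each individual $f$ admits some finite constants, but nothing in your argument makes them depend only on the listed data, which is the entire content of the theorem and the reason the paper needs this uniform version (it is applied to the family of domains $H_s$, with constants uniform in $s$). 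The standard repair is a bootstrap with genuine a priori input: first a H\"older-$1/2$ estimate $|f(e^{i\theta})-f(e^{i\theta'})|\leq C(D)\,|\theta-\theta'|^{1/2}$ from a length--area argument, which the chord-arc hypothesis upgrades to $|s(\theta)-s(\theta')|\leq c_1\, C(D)\,|\theta-\theta'|^{1/2}$; this makes $\arg f'$ H\"older of exponent $\alpha/2$ in $\theta$, a first application of the conjugation estimate (normalized at the center via Koebe, using $\rho$ and $D$ to pin down $\log|f'(0)|$, with $\ell$ ruling out degeneration) bounds $\log|f'|$ and hence makes $s(\theta)$ Lipschitz, and a second pass then yields the full exponent $\alpha$ and constants of the stated form. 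Your proposal omits this bootstrap and instead, in the final paragraph, defers precisely this step to Warschawski's inequality (10,16) and the two Zus\"atze --- the very same citations the paper rests on --- so what you have is an accurate gloss on the cited results rather than an independent proof.
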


Let us explain the argument in this subsection. 
The sphere mapping-out function $h_s$ is closely related to the conformal map $\varphi_s$, as $h_s(z) = \Psi \circ \varphi_s^{-1}(\sqrt{z-\g(s)})$. 
Lemma~\ref{lem_boundary_regularity} studies the boundary regularity of $H_s$, then Lemma~\ref{lem_u} applies Theorem~\ref{war} to $H_s$ which allows us to compute the angular derivatives of $\varphi_s$ at $0$ in Proposition~\ref{prop_varphi}. Since the curve $\g$ is contained in a cone at $0$, knowing the angular derivatives is enough to compute the regularity of $\eta$ which in turn gives us the regularity of $\tilde \g$ (Corollary~\ref{cor_1_1.5} and Corollary~\ref{cor_1.5_2}).

We start with some trivial but useful estimates on $\g$. 
For every $s \in [0,S]$, $h > 0$, 
\[\g(s + h) - \g(s)  -h \g'(s) = \int_0^h \g'(s+r)-\g'(s) \, dr. \]
Since $\abs{\g'(s+r) - \g'(s)} \leq \o(r) \leq \o(h)$ for $r \leq h$, we have
\begin{equation} \label{ineq_1}
|\g(s + h) - \g(s)  -h \g'(s)| \leq \abs{h}\o(\abs{h}).
\end{equation}
In particular, if $ 0 \leq h \leq R$, then
\begin{equation} \label{ineq_2}
 |\g(s+h) - \g(s)| \geq h - h\o(h) \geq 4h/5. 
 \end{equation}

\begin{lemma}\label{lem_boundary_regularity}
Let $\g$ be a $C^{1,\b}$ curve tangentially attached to $\m R_+$ of total length $S$, $R$-regular. For $s \in (0,S]$, the boundary $\Gamma$ of $H_s$, parametrized by arclength, is a $C^{1,\b}$ curve whose norm is bounded independently of $s$. 
Furthermore, there exists a constant $C >0$, depending only on $R,S$ and $\norm{\g}_{1,\b},$ such that
  $$|\arg (\Gamma'(l)) - \arg (\Gamma'(0))| \leq  C (l^{2\b} \wedge 1)  \quad  \textnormal{ for all }\ l \in \m R,$$
  where $\Gamma( 0 ) = 0$.
\end{lemma}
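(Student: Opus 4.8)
My plan is to work directly with the explicit conformal map $z\mapsto\sqrt{z-\g(s)}$ rather than invoking Theorem~\ref{war} (which I would reserve for $\varphi_s$ in the next lemma), splitting $\partial D_s=\g[0,s]\cup\m R_+$ into a part at definite distance from the tip $\g(s)$, where the square root is a benign conformal map, and a neighbourhood of the tip, which is what produces the exponent $2\b$. Since $\g(s)\neq0$ for $s>0$ and $D_s$ is simply connected, the relevant branch of $\sqrt{z-\g(s)}$ is conformal on $D_s$ with no critical points; hence on every part of the boundary staying at a fixed distance from $\g(s)$ it carries the $C^{1,\b}$ arc $\partial D_s$ to a $C^{1,\b}$ arc, with derivative and Hölder seminorm bounded above and below. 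At the junction $0$, where $\g$ meets $\m R_+$ with matching tangents, the map is conformal and $\partial D_s$ is $C^{1,\b}$, so $\Gamma$ is $C^{1,\b}$ there too, and near $\infty$ one has $\sqrt{z-\g(s)}\sim\sqrt z$, so $\Gamma$ is asymptotic to $\m R$. The $R$-regularity together with the bound on $\norm{\g}_{1,\b}$ makes all these bounds uniform in $s$, which gives the asserted $s$-independent $C^{1,\b}$ norm of $\Gamma$.

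The heart of the matter is the behaviour near the tip image $\Gamma(0)=0$. I parametrize $\partial D_s$ backwards from the tip by $u=s-\tau\ge0$ (using $\g(-v)=v$ on $\m R_+$), and take $w(\tau)=\sqrt{\g(\tau)-\g(s)}$ on one side of the slit and its negative on the other, so that
\[\arg\frac{dw}{d\tau}=\arg\g'(s-u)-\tfrac12\arg\bigl(\g(s-u)-\g(s)\bigr).\]
Writing $\g(s-u)-\g(s)=-u\,\g'(s)+E(u)$ with $|E(u)|\le\norm{\g}_{1,\b}\,u^{1+\b}$ by \eqref{ineq_1}, the second term is $\tfrac12\arg(-\g'(s))+O(u^\b)$, while $|\g'|\equiv1$ and $C^{1,\b}$ regularity give $\arg\g'(s-u)=\arg\g'(s)+O(u^\b)$. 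Thus $\arg(dw/d\tau)$ tends to a constant with error $O(u^\b)$; the same holds on the opposite side, and a check of orientations shows that the two oriented half-tangents coincide, so $\Gamma$ is $C^1$ through $0$.

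It remains to convert the arc variable $u$ into the arclength $l$ of $\Gamma$. Since $|\g'|\equiv1$ and \eqref{ineq_2} give $4u/5\le|\g(s-u)-\g(s)|\le u$ for $u\le R$, the speed $|dw/d\tau|=\tfrac12|\g(s-u)-\g(s)|^{-1/2}$ is comparable to $u^{-1/2}$, and integrating yields $\sqrt u\le l\le\tfrac{\sqrt5}2\sqrt u$, i.e. $u\asymp l^2$. Feeding this into the previous estimate turns $O(u^\b)$ into $O(l^{2\b})$, which is the bound $|\arg\Gamma'(l)-\arg\Gamma'(0)|\le C\,l^{2\b}$ near $0$, with $C$ depending only on $\norm{\g}_{1,\b}$, $R$ and $S$. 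For $l$ bounded away from $0$ the quantity $l^{2\b}\wedge1$ is bounded below, so the estimate follows from the uniform global control of the tangent angle from the first step; enlarging $C$ then merges the two regimes into the single bound $C(l^{2\b}\wedge1)$ valid for all $l\in\m R$.

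The step I expect to be the main obstacle is precisely this tip analysis. The delicate points are keeping every constant uniform in $s$ (in particular as $\g(s)\to0$ when $s\to0$), verifying that the branch of $\sqrt{z-\g(s)}$ and the orientation of $\partial D_s$ make $\arg\Gamma'$ genuinely continuous---indeed $2\b$-flat---at $l=0$, and matching the microscopic scale $u\le R$, where the expansion is valid, to the macroscopic scale handled by conformality. These are exactly the places where \eqref{ineq_1}, \eqref{ineq_2} and the $R$-regularity assumption enter in an essential way.
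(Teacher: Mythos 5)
Your overall route coincides with the paper's more than you expect: for this lemma the paper does \emph{not} invoke Theorem~\ref{war} either (that is reserved for $\varphi_s$ in Lemma~\ref{lem_u}), but analyzes the explicit map directly, setting $\tilde\G(r)=\sqrt{\g(s-r^2)-\g(s)}$ with odd extension across $0$ --- your variable $u$ is its $r^2$. Your tip computation is sound as far as it goes: the expansion $\arg\bigl(\g(s-u)-\g(s)\bigr)=\arg(-\g'(s))+O(u^\b)$ via \eqref{ineq_1}, the speed comparison $\abs{dw/d\tau}=\tfrac12\abs{\g(s-u)-\g(s)}^{-1/2}\asymp u^{-1/2}$ giving $l\asymp\sqrt u$ via \eqref{ineq_2}, and the use of the extension $\g(-v)=v$ so that the analysis survives $s\to0$, all match the paper and correctly yield the displayed estimate $\abs{\arg\G'(l)-\arg\G'(0)}\leq C\,l^{2\b}$ near $0$, hence (after merging with the global bound) the second assertion.

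There is, however, a genuine gap in your proof of the \emph{first} assertion, the uniform $C^{1,\b}$ norm of $\G$. Near the tip you only compare $\arg\G'(l)$ with its value at $l=0$. For $0<l_1<l_2$ with $\d:=l_2-l_1$, the triangle inequality through $0$ gives $\abs{\arg\G'(l_1)-\arg\G'(l_2)}\lesssim l_2^{2\b}$, which is $\lesssim\d^\b$ only when $\d\gtrsim l_2^2$. In the complementary regime $\d\ll l_2^2$ --- pairs of points much closer to each other than the square of their distance to the tip --- your zeroth-order flatness estimate is too weak, and your ``fixed distance from $\g(s)$'' conformality argument does not cover it uniformly either, since the relevant seminorm of $z\mapsto\sqrt{z-\g(s)}$ degenerates as the distance to the branch point shrinks (the second derivative is of order $\dist(z,\g(s))^{-3/2}$). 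The paper closes exactly this regime with a first-order estimate: it bounds $f'(r)$ for $f(r)=r/\sqrt{\g(s-r^2)-\g(s)}$ by $C\o(r^2)/r\leq Cr^{2\b-1}$ using \eqref{ineq_1}, and integrates to get $\abs{f(r+\vare)-f(r)}\lesssim\vare^{2\b\wedge 1}$, which combined with the term $\o(2r\vare+\vare^2)\lesssim\vare^\b$ gives $\abs{\tilde\G'(r+\vare)-\tilde\G'(r)}\leq C_4\vare^\b$ at \emph{all} interior points, uniformly in $s$. In your formulation the same fix is short: differentiate $u\mapsto\arg\bigl(\g(s-u)-\g(s)\bigr)$ to get $\abs{\Im\bigl(\g'(s-u)/(\g(s-u)-\g(s))\bigr)}\leq C\o(u)/u\leq Cu^{\b-1}$ by \eqref{ineq_1}, integrate from $u_1$ to $u_2$ (where $u_2-u_1\asymp l_2\d$ in this regime), and combine with $\abs{\arg\g'(s-u_1)-\arg\g'(s-u_2)}\leq\o(u_2-u_1)$. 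Without some such step, the claim that $\norm{\G}_{1,\b}$ is bounded independently of $s$ is not actually established by what you wrote.
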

\begin{proof}

Define
  $$ \tilde \G (r) =  \sqrt{\g(s-r^2) - \g(s)} \quad \text{ for } r\geq 0
  $$ 
and set $\tilde \G(r)=  -  \tilde \G (-r)$ for $r<0.$ Since $\g$ has finite chord-arc constant, $|\g(s-r^2) - \g(s)|$
 is comparable to $r^2$, and consequently
  $$\tilde \G' (r) = \frac{-r \g'(s-r^2)}{\sqrt{\g(s-r^2) - \g(s)}}$$
 is bounded above and away from zero. Since $\G$ is the arc-length parametrization of $\tilde \G$, 
 it easily follows that the modulus of continuity of $\G$ is bounded in terms of the modulus of continuity of $\tilde\G$, $\o_{\G}(r)\leq C\o_{\tilde\G}(C r)$. 
 Hence it suffices to prove the claims of the proposition for $\tilde\G$ instead of $\G.$ 

\noindent
If $\vare>0$ and $r>0,$
  \begin{align*}
   \abs{\tilde \G'(r+\vare) - \tilde \G'(r)} 
  =& \abs{\frac{-(r+\vare) \g'(s-(r+\vare)^2)}{\sqrt{\g(s-(r+\vare)^2) - \g(s)}} - \frac{-r \g'(s-r^2)}{\sqrt{\g(s-r^2) - \g(s)}} } \\
  \leq & \abs{\frac{-(r+\vare) [\g'(s-(r+\vare)^2)-\g'(s-r^2)]}{\sqrt{\g(s-(r+\vare)^2) - \g(s)}} } \\
  & +  \abs{ \frac{-(r+\vare)}{\sqrt{\g(s-(r+\vare)^2) - \g(s)}} + \frac{r}{\sqrt{\g(s-r^2) - \g(s)}}} \\
  \leq &  C \o(2r\vare +\vare^2) + \abs{f(r+\vare) - f(r)},
  \end{align*}
  where $f(r) = r/\sqrt{\g(s-r^2) - \g(s)}$.
By  \eqref{ineq_1} and the aforementioned comparability of $|\g(s-r^2) - \g(s)|$ and $r^2$, we obtain
$$\abs{f'(r)} = \abs{\frac{\g(s-r^2) - \g(s) + r^2 \g'(s-r^2)}{(\g(s-r^2) - \g(s))^{3/2}}} \leq \frac{r^2 \o(r^2)}{(C r^2)^{3/2}} = C_1 \o(r^2)/r.$$
Since $\g$ is a $C^{1,\b}$ curve and for all $\d\leq1/2$, we have $\o(\d) \leq \norm{\g}_{1,\b} \d^\b$ so that
  $$\abs{f'(r)} \leq C_1 \norm{\g}_{1,\b} r^{2\b - 1} \quad\text{for } r \leq1/2.$$
It follows that
  $$\abs{f(r+\vare) - f(r)} \leq C_2 \abs{(r+\vare)^{2\b} - r^{2\b}} \leq C_3 \vare^{2\b \wedge 1}.$$
Letting $r\to0$ we obtain
$$\abs{\tilde \G'(\vare) - \tilde \G'(0)} \leq C\o(\vare^2)+|f(\vare)-f(0)|\leq (C \norm{\g}_{1,\b}+C_2)\vare^{2\b},$$
while for $r<2S$ and $\vare<1/2$ we get
$$\abs{\tilde \G'(r+\vare) - \tilde \G'(r)} \leq  C_4 \vare ^ \b.$$
Direct computation shows that for $r>2S$ we have $\abs{\tilde \G'(r+\vare) - \tilde \G'(r)} \leq  C_5 \vare$,
and we deduce that $\Gamma$ is a $C^{1,\b}$ curve.
\end{proof}

\begin{lemma} \label{lem_u} 
   There exists a unique conformal map $\varphi_s : \m{H} \to H_s$ such that $\varphi_s(0) = 0$ and $\varphi_s(z) = z(1+ o(1))$ as $z\to \infty$. Moreover, $\varphi_s$ extends by continuity to a $C^{1,\b}$ map $\overline{\m{H}} \to \overline{H_s}$, and 
 $$\frac1C\leq \abs{\varphi_s'(r) } \leq C$$
   for all $r \in \m R$ and some constant $C$ depending only on $R,S$ and $\norm{\g}_{1,\b}.$
\end{lemma}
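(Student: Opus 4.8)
The plan is to obtain existence and uniqueness from a normalization argument, and then to derive the boundary regularity by transporting the Kellogg--Warschawski bounds of Theorem~\ref{war} from the disc to $H_s$. Since $\g$ is simple and $D_s$ is simply connected, $H_s$ is a Jordan domain on the sphere, bounded by the arc $\Gamma$ together with the boundary point $\infty$. By the Riemann mapping theorem and Carath\'eodory's theorem there is a conformal map $\psi\colon \m H\to H_s$ extending to a homeomorphism of closures; pre-composing with an automorphism of $\m H$ I may assume $\psi(0)=0$ and $\psi(\infty)=\infty$, after which the only remaining freedom is the scaling group $z\mapsto\lambda z$, $\lambda>0$. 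The point is that $\Gamma$ is asymptotically flat at $\infty$: for large $|r|$ the convention $\g(-u)=u$ exhibits $\Gamma$ as the analytic arc $r\mapsto\sqrt{(r^2-s)-\g(s)}$, asymptotic to $\m R$ and extending analytically through $\infty$ on the sphere. Hence any such $\psi$ is asymptotically linear, $\psi(z)=\lambda_0 z(1+o(1))$ for some $\lambda_0>0$, so the normalization $\varphi_s(z)=z(1+o(1))$ selects a unique scaling; uniqueness follows since two admissible maps differ by a scaling which must then be trivial. This is consistent with the identity \eqref{eq_centered_mapping_out} relating $\varphi_s$ to the centered mapping-out function $f_t$.

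For the regularity I would first move the boundary point $\infty$ to a finite location: choose a M\"obius map $N$ of $\Chat$, conformal near $\overline\Gamma$, such that $\Omega:=N(H_s)$ is a bounded Jordan domain. By Lemma~\ref{lem_boundary_regularity} the curve $\Gamma$ is $C^{1,\b}$ with norm uniform in $s$, and by the above it is analytic near $\infty$; thus $\partial\Omega=N(\Gamma\cup\{\infty\})$ is a bounded $C^{1,\b}$ Jordan curve whose diameter, length, chord-arc constant, H\"older data and inradius are all controlled by $R,S$ and $\norm{\g}_{1,\b}$ (using also the chord-arc bound for $\g$ recalled before Theorem~\ref{war}). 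With $T\colon\m H\to\m D$ a fixed Cayley transform, the composition $F:=N\circ\varphi_s\circ T^{-1}\colon\m D\to\Omega$ is a conformal map of the disc onto a bounded Jordan domain, so Theorem~\ref{war} yields $\mu_1\le|F'|\le\mu_2$ and $|F'(z)-F'(w)|\le L|z-w|^{\b}$ on $\overline{\m D}$ with constants uniform in $s$. I would then read these estimates back through $\varphi_s=N^{-1}\circ F\circ T$: away from the preimage of $\infty$ both $T$ and $N^{-1}$ are conformal with non-vanishing derivative, so $\varphi_s$ inherits the $C^{1,\b}$ regularity and the two-sided bound on $|\varphi_s'|$; near $\infty$ the normalization forces $\varphi_s$ to be an asymptotically-identity perturbation along the analytically flat part of $\Gamma$, giving $|\varphi_s'(r)|\to1$ and closing the estimate for all $r\in\m R$.

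The hard part will be the point at infinity. The transfer maps $T$ and $N^{-1}$ degenerate precisely at the preimage of $\infty$ (one derivative tending to $0$, the other to $\infty$), and I must check that these degenerations cancel so that $\abs{\varphi_s'(r)}$ stays bounded above and away from zero there; this is exactly where the asymptotic normalization $\varphi_s(z)=z(1+o(1))$ enters. A secondary technical point is to confirm that all quantities feeding Theorem~\ref{war} — especially $\rho=\dist(F(0),\partial\Omega)$ and the chord-arc constant — remain uniform in $s\in(0,S]$, including the degenerate limit $s\to0^+$, where the tip region collapses and $\varphi_s$ tends to the identity.
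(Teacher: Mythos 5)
Your proposal is correct and follows essentially the same route as the paper: conjugate by a M\"obius map to a bounded Jordan domain (the paper takes $T_1(z)=(z-z_0)/(z+z_0)$ with $z_0=3i\sqrt S$, which also settles your uniformity worry since $\dist(\pm z_0,\G)\geq \sqrt S$ for every $s\in(0,S]$), verify the hypotheses of Theorem~\ref{war} with constants depending only on $R,S$ and $\norm{\g}_{1,\b}$, and transfer the derivative bounds back. The two issues you flag are resolved in the paper exactly as you anticipate: the boundary is smooth through $\infty$ in sphere coordinates (the paper's ``straightforward computation'' that $\s$ is $C^{1,\b}$ near $T_1(\infty)=1$), and the second M\"obius factor $T_2:\m H\to\m D$ is normalized at $\infty$ by the expansion $1+c/z+O(1/z^2)$ with $\abs{c}=2\abs{z_0/f'(1)}$, so the degenerations at the preimage of $\infty$ cancel and $\varphi_s(z)=z(1+o(1))$ holds by construction.
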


\begin{proof} 
The points $z_0 :=3 i \sqrt{S}\in H_s$ and $-z_0$ have distance at least $\sqrt{S}$ from the boundary $\G$ of $H_s.$ The M\"obius transformation $T_1(z)=(z-z_0)/(z+z_0)$ maps $\G$ to a (closed) Jordan curve $\s=T_1(\G)$. 
We will first show that $\s$ satisfies the assumptions of Theorem \ref{war}, with constants depending only on $R,S$ and $\norm{\g}_{1,\b}$.
Since $\s$ is contained in the image under $T_1$ of the circle of radius $\sqrt{S}$ centered at $-z_0,$ a simple calculation shows that the diameter of $\s$ is bounded above by $12$.  Similarly, the distance $\dist(0,\s)$ is bounded below by the inradius $1/5$ of the image of the circle of radius $\sqrt{S}$ centered at $z_0.$ The length of $\s$ is bounded below since $T_1(\infty)=1$ and $T_1(0)=-1$ are in $\s.$ We already noted that the chord-arc constant $c_1(\g)$ is bounded in terms of $R,S$ and $\norm{\g}_{1,\b}.$
It is an exercise to show that the 
image under the square-root map of a chord-arc curve from $0$ to $\infty$ is chord-arc with comparable constant, so that $c_1(\G)$ is uniformly bounded. It easily follows that $c_1(\s)$ is bounded as well. Finally,
from Lemma~\ref{lem_boundary_regularity} we know that the regularity of $\s$ is $C^{1,\b}$ away from $T_1(\infty)=1$. But from a straightforward computation, we see that $\s$  is also at least $C^{1,\b}$ near $1$.
Thus $T_1(H_s)$ is bounded by a $C^{1,\b}$ Jordan curve. 

Consider the conformal map $f:\m{D}\to T_1(H_s)$ that is normalized by $f(0)=0$ and $f(1)=1,$ and denote 
$p=f^{-1}(-1).$ By Theorem \ref{war}, the derivative of $f$ is bounded above, so that $|p-1|$ is bounded away from zero. Denote $T_2:\m{H}\to \m{D}$ the M\"obius transformation that sends  $\infty$ to $1$, $0$ to $p$, and is furthermore normalized by $T_2(z)=1+c/z +O(1/z^2)$ where $|c|=2|z_0/f'(1)|.$ Then either
$\varphi_s = T_1^{-1}\circ f\circ T_2$ or $-\varphi_s$ is the conformal map from $\m{H}$ to $H_s$ with the desired normalization, and the regularity claims about $\varphi_s$ follow from Theorem \ref{war}.
\end{proof}

Now we are ready to compute the angular derivatives of $\varphi_s$ at $0$. It is not surprising that the highest order that we need to consider is related to the value of $\b$. Heuristically, since the boundary  $\Gamma$ of the domain behaves like a $C^{1+2\b}$ curve at $0$ thanks to Lemma~\ref{lem_boundary_regularity}, one expects that $\varphi_s$ has angular derivatives up to the order $1+2\b$.   
The precise statement is the following:
  
  \begin{proposition}\label{prop_varphi} 
  There exist $L_s > 0$ and $C_1 = C_1(\b, R, S,\norm{\g}_{1,\b})$, such that for all $ 0\leq |x|\leq y \leq 1/2$,
    \begin{align}
   \label{ineq_phi_prime_1}
   \abs{\varphi_s' (x+iy) - \varphi_s'(0)} \leq C_1 y^{2\b}, \quad &  \,\text{ if } \, 0<\b<1/2,  \\
   \label{ineq_phi_prime_1.5}
   \abs{\varphi_s' (x+iy) - \varphi_s'(0)} \leq C_1 y \log(1/y), \quad &  \,\text{ if } \, \b=1/2, \\
   \label{ineq_phi_prime_2}
    \abs{\frac{\varphi_s'' (x+iy)}{\varphi_s' (x+iy)} - L_s} \leq C_1 y^{2\b-1}, \quad &  \, \text{ if } \, 1/2 < \b < 1, \\
    \label{ineq_phi_prime_2.5}
    \abs{\frac{\varphi_s'' (x+iy)}{\varphi_s' (x+iy)} - L_s} \leq C_1 y \log(1/y), \quad &  \, \text{ if } \, \b = 1, 
    \end{align}
    where $\varphi_s$ is defined in Lemma~\ref{lem_u}. Moreover, if $v(r) := \Im \log (\varphi_s'(r))$ for $r \in \m R\setminus\{0\}$, then we have the explicit expression 
 \begin{equation} \label{eq_Ls}
 L_s = \frac{1}{\pi} \int_{-\infty}^{\infty}\frac{v(r) - v(0)}{r^2} dr.
 \end{equation}

  \end{proposition}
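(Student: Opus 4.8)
The plan is to exploit the fact, established in Lemma~\ref{lem_u}, that $\varphi_s$ extends to a $C^{1,\b}$ homeomorphism of the closed half-plane, together with the refined boundary behavior near $0$ coming from Lemma~\ref{lem_boundary_regularity}, which says that $\arg(\Gamma'(l)) - \arg(\Gamma'(0))$ decays like $l^{2\b}$. Setting $v(r) = \Im \log (\varphi_s'(r))$, the Kellogg--Warschawski theory identifies $v$ (up to an additive constant) with $\arg \Gamma'$ composed with the boundary correspondence, and since the boundary correspondence is bi-Lipschitz (derivative bounded above and below by Lemma~\ref{lem_u}), the estimate $|v(r) - v(0)| \leq C |r|^{2\b}$ holds for $r$ small, with $v$ bounded for all $r$. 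This is the single input that drives all four estimates.

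First I would recall the Schwarz/Poisson representation: since $\log \varphi_s'$ is holomorphic in $\m H$ and bounded (again using $1/C \leq |\varphi_s'| \leq C$), it is recovered from its boundary imaginary part $v$ via the Poisson kernel, i.e.
\[
\log \varphi_s'(x+iy) = \frac{1}{\pi}\int_{-\infty}^{\infty} \frac{y}{(x-r)^2 + y^2}\, v(r)\, dr + i c
\]
for a real constant $c$. Differentiating under the integral in the direction of $x+iy$ gives
\[
\frac{\varphi_s''(x+iy)}{\varphi_s'(x+iy)} = \frac{1}{\pi}\int_{-\infty}^{\infty} \frac{v(r)}{(x+iy-r)^2}\, dr,
\]
where I would first subtract the constant $v(0)$ (legitimate because the kernel $1/(x+iy-r)^2$ integrates to $0$ over $\m R$) to get an absolutely convergent integral against $v(r)-v(0)$. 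The whole problem now reduces to estimating these two explicit singular integrals against a density that is $O(|r|^{2\b})$ near the origin and bounded at infinity.

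The four cases then separate naturally according to whether $2\b < 1$, $2\b = 1$, $2\b > 1$, or $2\b = 2$. For $0 < \b < 1/2$ I would estimate $\varphi_s'(x+iy) - \varphi_s'(0)$ directly from the Poisson integral of $v - v(0)$: splitting the integration region into $|r| \leq y$, $y \leq |r| \leq 1/2$, and $|r| \geq 1/2$, and using $|v(r)-v(0)| \leq C|r|^{2\b}$ on the first two pieces, yields the bound $C y^{2\b}$, with the borderline $\b = 1/2$ producing the logarithmic factor $y\log(1/y)$ exactly because the middle region contributes $\int_y^{1/2} r^{2\b-1}\, dr$, which is logarithmic when $2\b = 1$. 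For $1/2 < \b \leq 1$ the density $|r|^{2\b}$ is now integrable against the second-order kernel $1/(x+iy-r)^2$, so the limit $L_s := \tfrac{1}{\pi}\int (v(r)-v(0))/r^2\, dr$ of equation~\eqref{eq_Ls} exists; I would then bound the difference $\varphi_s''/\varphi_s' - L_s$ by comparing the kernel $1/(x+iy-r)^2$ with $1/r^2$, again splitting into the three regions, and the same arithmetic gives $y^{2\b-1}$ for $1/2 < \b < 1$ and the logarithmic correction $y\log(1/y)$ at $\b = 1$.

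The main obstacle I anticipate is twofold. First, I must make the identification of $v$ with $\arg \Gamma'$ rigorous enough to transfer the $l^{2\b}$ modulus from Lemma~\ref{lem_boundary_regularity} to a genuine $|r|^{2\b}$ bound on $v(r)-v(0)$; this requires controlling the boundary correspondence $r \mapsto \varphi_s(r)$ and its inverse, which is where the bi-Lipschitz bounds of Lemma~\ref{lem_u} are essential, and where the relationship between the arclength parameter $l$ on $\Gamma$ and the half-plane parameter $r$ must be tracked carefully. Second, the region-splitting estimates in the ranges $\b \geq 1/2$ need the cancellation of $\int 1/(x+iy-r)^2\, dr = 0$ to be used honestly, since without subtracting $v(0)$ (or $L_s$) the integrals are only conditionally convergent; keeping the constants uniform in $s$, as claimed, then follows because every bound traces back to $\norm{\g}_{1,\b}$, $R$, and $S$ through Lemmas~\ref{lem_boundary_regularity} and~\ref{lem_u}.
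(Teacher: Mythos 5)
Your proposal follows essentially the same route as the paper's proof: both identify $v=\Im\log\varphi_s'$ with the (harmonic/Schwarz) extension of the boundary argument, transfer the $l^{2\b}$ modulus of continuity from Lemma~\ref{lem_boundary_regularity} to $|v(r)-v(0)|\leq C(|r|^{2\b}\wedge 1)$ via the bi-Lipschitz boundary correspondence of Lemma~\ref{lem_u}, define $L_s$ exactly by \eqref{eq_Ls}, and obtain all four estimates by comparing the differentiated kernel against $1/r^2$ with the same three-region splitting (your single complex kernel $1/(z-r)^2$ is just the paper's two real kernels $\partial_y$ and $\partial_x$ of the Poisson kernel, i.e.\ $\partial_x u+i\partial_x v$, combined). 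Two small repairs are needed but are within your own toolkit: the first display is wrong as written, since a real Poisson integral cannot equal the holomorphic $\log\varphi_s'$ plus $ic$ (the correct statement is the Schwarz integral $\frac{1}{\pi}\int v(r)(r-z)^{-1}\,dr$ up to a \emph{real} constant, which does yield your correct formula for $\varphi_s''/\varphi_s'$), and for $0<\b\leq 1/2$ the Poisson integral of $v-v(0)$ only controls the imaginary part of $\log\varphi_s'$, so the real part must be recovered by integrating your bound $|\varphi_s''/\varphi_s'|\lesssim y^{2\b-1}$ (resp.\ the log-corrected bound) along a path from $0$ to $x+iy$, which is precisely how the paper handles $u(x+iy)-u(0)$.
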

  \begin{proof}

We denote the harmonic extension of $v$ to $\overline{\m{H}}$  also by $v$. More precisely, for $x \in \m{R}$ and $y >0$,
\[ v(x+iy) = \frac{1}{\pi} \int_{-\infty}^{\infty} \frac{y}{(r-x)^2 + y^2} v(r) dr.\]
We have $v = \Im \log \varphi_s'$: Indeed, if $u$ is a harmonic conjugate of $v$  on $\m{H}$, then 
$\phi(z) := \log \varphi_s'(z) - (u(z) + iv(z))$ is holomorphic in $\m{H}$, with $\Im(\phi (r)) = 0$ if $r \in \m{R}$. By Schwarz reflection, $\phi$ extends to an entire function with real coefficients. 
Since both $\Im \log \varphi_s'$ and $v$ are bounded in $\m{H}$ (the boundedness of $\Im \log \varphi_s'$ near $\infty$ easily follows from the smoothness of $\s=T_1(\G)$ established in the proof of Lemma~\ref{lem_u}), the imaginary part of $\phi$ is bounded so that $\phi$ is a real constant which we may assume to be zero by adjusting $u.$
Consequently, $u$ and $v$ are the real and imaginary part of $\log \varphi'$.

Since $\varphi_s'(r)$ is bounded away from $0$ and $\infty$, the conformal parametrization of $\partial H_s$ is comparable to the arclength parametrization. 
By Lemma~\ref{lem_boundary_regularity} and Lemma~\ref{lem_u}, there exists $C$ depending on $S$, $R$ and $\norm{\g}_{1,\b}$, such that
$$\abs{w(r)} \leq 
C (r^{2\b} \wedge 1),$$
where $w(r):= v(r) - v(0)$. 
We also have
\begin{align*}
  \partial_x u(x+iy)&= \partial_y v(x+iy) = \frac{1}{\pi} \int_{-\infty}^{\infty}\frac{(r-x)^2 -y^2}{[(r-x)^2 +y^2]^2} v(r) dr \\
  &= \frac{1}{\pi y} \int_{-\infty}^{\infty}\frac{t^2 -1}{(t^2 +1)^2} w(ty + x) dt,
  \end{align*}
  and
  \begin{align*}
  -\partial_y u(x+iy) &= \partial_x v(x+iy) = \frac{1}{\pi} \int_{-\infty}^{\infty}\frac{2y(r-x)}{[(r-x)^2 +y^2]^2} v(r) dr \\
  &= \frac{1}{\pi y}  \int_{-\infty}^{\infty}\frac{2t}{(t^2 +1)^2} w(ty + x) dt .
\end{align*}

For $\b < 1/2$,
we use the bound of  $w(r)$ in the above expressions and obtain for $(x,y)$ with $0 \leq x \leq y \leq  1/2$,
\begin{align*}
\abs{\partial_x u(x + iy)}  \leq & \frac{2C }{\pi y} \abs{ \int_0^{\infty} \frac{|t^2 -1|}{(t^2 +1 )^2} \left(t+\frac{x}{y}\right)^{2\b} y^{2\b}dt + \int_{(1 -x)/y }^{\infty} \frac{|t^2 -1|}{(t^2 +1 )^2} dt} \\
\leq & \frac{2C }{\pi y} \abs{ \int_0^{\infty} \frac{1}{t^2 +1 } (t+x/y )^{2\b} y^{2\b}dt + \int_{(1 -x)/y }^{\infty} \frac{1}{t^2 +1 } dt} \\ 
\leq & y^{2\b -1} \frac{2C }{\pi} \abs{\int_0^{\infty} \frac{1}{t^2 +1 } (t+1 )^{2\b} dt} + \frac{2C }{\pi y} \abs{\arctan\left(\frac{y}{1 - x}\right)} \\ 
\leq & C_2 y^{2\b -1}  + \frac{2C}{\pi (1 - x)} \leq  C_2 y^{2\b -1} + C',
\end{align*}
where $C_2 = 2C  \int_0^{\infty} (t+1 )^{2\b}/(t^2 +1)  dt/\pi$ and $C' = 8C/\pi$.
Similarly, 
\begin{align*}
\abs{\partial_y u(x+iy)} & 
\leq  \frac{2C}{\pi y}  \abs{ y^{2\b }   \int_{0}^{\infty} \frac{2t}{(t^2 +1 )^2} (t+x/y)^{2\b}  dt +  \int_{(1 -x) / y}^{\infty} \frac{2t}{(t^2 +1 )^2}  dt } \\
& \leq  C_3 y^{2\b -1}  + \frac{2C}{\pi y}\frac{y^2}{(1 - x)^2 +y^2} \\
& \leq  C_3 y^{2\b -1} + C' y,
\end{align*}
where $C_3 = 2C \int_0^{\infty} t(t+1 )^{2\b}/(t^2 +1)^2  dt / \pi$. Consequently,

  \begin{align*} 
  \abs{u(x+ iy) - u(0)} 
   \leq &  \abs{\int_0^{y} \partial_r u(ir) dr} 
   + \abs{\int_0^x \partial_r u(r+iy) dr}\\
 \leq & C_3  \int_0^{y} r^{2\b -1} dr   + C' y^2
 + x \left[ C_2 y^{2\b -1}   + C' \right]
  \leq  C_1 y^{2\b},
  \end{align*}
where $C_1$ does not depend on $s$.
Similarly, for the imaginary part,
\begin{align*} \abs{v(x+iy) - v(0)}& = \abs{\frac{1}{\pi} \int_{-\infty}^{\infty} \frac{y}{(r-x)^2 + y^2} ( v(r) - v(0)) dr} \\
&= \abs{\frac{1}{\pi} \int_{-\infty}^{\infty} \frac{1}{t^2 + 1} w(ty +x) dt}\\
&\leq y^{2\b} \abs{\frac{2C}{\pi} \int_{0}^{\infty} \frac{1}{t^2 + 1} (t+1)^{2\b} dt} + \frac{2C}{\pi} \abs{\arctan\left(\frac{y}{1 - x}\right)}\\
&\leq  C_2 y^{2\b}  + C' y \leq  C_1 y^{2\b}.
\end{align*}

 In the case $\b = 1/2$, we need to estimate more carefully, since some of the above integrals diverge. 
Again, for $0 \leq x \leq y \leq 1/2$,
 \begin{align*} 
  \abs{\partial_x u(x+iy) }
  &\leq  \frac{1}{\pi y} \int_{-\infty}^{\infty} \frac{|t^2 -1|}{(t^2 +1 )^2}  \abs{w(ty+x)}dt \\
& \leq \frac{C}{\pi y} \left(  \int_{I(y)} \frac{|t^2 -1|}{(t^2 +1 )^2} |ty +x|  dt +  \int_{\m{R}\setminus I(y)} \frac{|t^2 -1|}{(t^2 +1 )^2} dt \right) \\
& \leq \frac{2 C}{\pi y} \left(  \int_0^{(x+1)/y} y \frac{t+1}{t^2 +1 }   dt +   \int_{(1 -x)/y}^{\infty} \frac{1}{t^2 +1 }   dt \right) \\
& = \frac{C}{\pi} \left[\log(t^2+1) + 2 \arctan (t)\right]_0^{(x+1)/y} + \frac{C}{\pi y} \arctan \left(\frac{y}{1-x}\right) \\ 
& \leq  \frac{2C}{\pi} \log \left(\frac{1}{y}\right) +  \frac{C}{\pi} \log \left(\frac{5}{2} \right) + C + C'\\
& \leq C'' \log(1/y),
  \end{align*}
 where $I(x,y) = [-(x+1)/y , (1 -x )/y]$. For $\partial_y u(x+iy)$, the same bound obtained for $\b <1/2$ also holds for $\b <1$, namely 
  \[ \abs{\partial_y u(x+iy) }\leq C_2 y^{2\b -1} + C'.\]
  Hence  there exists $C_1$ such that for $0 \leq x \leq y \leq 1/2$,
 \begin{align*}
 \abs{u(x+iy) - u(0)} & \leq C_1 y \log(1/y).
 \end{align*}
 A similar calculation also holds for $v$, \ie
 $$
\abs{v(x+iy) - v(0)}  \leq C_1 y \log(1/y). 
$$
\item 
For $1/2 < \b < 1$, $ 0\leq x \leq y \leq 1/2$, we have already seen in the above computation that
  \begin{align*}
  \abs{\partial_x v(x+iy) } = \abs{\partial_y u(x+iy) }  \leq C_3 y^{2\b -1}  +  C' y  \leq C_4 y^{2\b-1} . 
\end{align*}
 We define 
 \begin{equation} 
 L_s = \frac{1}{\pi} \int_{-\infty}^{\infty}\frac{w(r)}{r^2} dr
 \end{equation}
and obtain
\begin{align*}
  \partial_x u(x+iy) - L_s &= \frac{1}{\pi} \int_{-\infty}^{\infty}\left[ \frac{(r-x)^2 -y^2}{[(r-x)^2 +y^2]^2}  - \frac{1}{r^2} \right] w(r) dr \\
  &=\frac{1}{\pi} \int_{-\infty}^{\infty}\left[ \frac{P(r)}{[(r-x)^2 +y^2]^2 r^2}  \right] w(r) dr,
   \end{align*}
where $P$ is a polynomial of degree $3$ with coefficients in $\m R[x,y]$. After the change of variable $r = ty + x$, and set $\xi = x/y$, we get 
  \begin{align*} 
  \abs{\partial_x u(x+iy) - L_s} \leq  & y^{2\b - 1} \frac{C}{\pi} \int_{-\infty}^{\infty}\abs{\frac{\tilde{P}(t, \xi)}{\tilde{Q}(t,\xi)}}  (\abs{t}+1)^{2\b} dt +  \frac{C}{\pi y} \int_{\m R \setminus I(x,y)}\abs{\frac{\tilde{P}(t, \xi)}{\tilde{Q}(t,\xi)}}  dt \\
  \leq & y^{2\b - 1} \frac{C}{\pi} \int_{-\infty}^{\infty}\abs{\frac{\tilde{P}(t, \xi)}{\tilde{Q}(t,\xi)}}  (\abs{t}+1)^{2\b} dt +  \frac{C_5}{y} \int_{(1-x)/y }^{\infty} \frac{dt}{t^3}  \\
  =& y^{2\b - 1} \frac{C}{\pi} \int_{-\infty}^{\infty}\abs{\frac{\tilde{P}(t, \xi)}{\tilde{Q}(t,\xi)}}  (\abs{t}+1)^{2\b} dt + C_6 y,
  \end{align*}
  where $C_5$ and $C_6$ are universal constants. Both $\tilde P$ and $\tilde Q$ have degree $6$ in the second variable, and degree $3$ and $6$ respectively in the first variable.
Since $\xi \in [-1,1]$, and $\tilde P(t,\xi)/ \tilde Q(t,\xi) (\abs{t} +1) ^{2\b}$ can be uniformly bounded  by an integrable function ($ \sim (1+t)^{2\b -3}$), we know that there exists $C_1 = C_1( \b, S,R, \norm{\g}_{1,\b}) >0$ such that 
 \[\abs{\partial_x u(x+iy) - L_s} \leq  C_1y^{2\b - 1},\]
and similarly 
\[\abs{\partial_x v(x+iy)} \leq C_1y^{2\b -1}.\]
In terms of $\varphi_s$, 
\[ \frac{\varphi_s''}{\varphi_s'} (x+iy) = \log(\varphi_s')' (x+iy) = \partial_x u + i\partial_x v.\]
We have thus obtained the bound \eqref{ineq_phi_prime_2}.

The case where $\b = 1$ is similar to the case $\b = 1/2$. Integration of $dt/t$ on the interval 
$I(x,y) = [-(x+1)/y, (1 -x)/y]$ gives the $\log (1/y)$ term. 
\end{proof}

We define $\nabla := \{z = x+iy \in \m H, \, y \leq 1/2 \text{ and } \abs{x} \leq y\}$. 
Let $\g$ be a $C^{1,\b}$ curve.
From the above proposition, it is easy to see that there exists $R_0 >0$ such that for all $s \in [0,S]$, $\sqrt{\g(s+r) -\g(s)} \in \varphi_s(\nabla)$ for all $r \in [0,R_0]$ where the map $\varphi_s$ is as defined in Lemma~\ref{lem_u}.

\begin{corollary} \label{cor_1_1.5} If $0<\b\leq 1/2$,
the image $\tilde{\g}$ of $\g[s,S]$ under the conformal map 
   \[h_s(z) = \left[ \varphi_s^{-1} \left (\sqrt{z -\g(s)} \right )\right]^2, \quad D_s \to D_0\]
   is also a $C^{1,\b}$ curve (weak $C^{1,\b}$ curve if $\b = 1/2$). More precisely, its behavior near $0$ under arclength parametrization is
   \begin{align*}
   \abs{\tilde \gamma'(r) + 1 } &\leq C_2 r^\b  \hspace{2cm}  \text{ if }  0< \b < 1/2, \\
   \abs{\tilde \gamma'(r) + 1 } &\leq C_2 r^\b \log (1/r)  \hspace{1.1cm}  \text{if } \b =1/2,
   \end{align*}
or all $r \leq R_0,$ where $C_2$ is independent of $s$.
   \end{corollary}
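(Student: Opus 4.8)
The plan is to differentiate $h_s$ along $\g$ and to show that each of the factors appearing in the resulting expression deviates from its limiting value at the tip by at most $O(r^\b)$, with a logarithmic correction when $\b=1/2$. The mechanism that converts the exponent $2\b$ of Proposition~\ref{prop_varphi} into the exponent $\b$ claimed here is precisely the square root in the definition of $h_s$: if $u(r)$ denotes the preimage of the tip under $\varphi_s$ along $\g$, then $\Im u(r)$ is of order $\sqrt r$, so $(\Im u(r))^{2\b}\asymp r^\b$.

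First I would set up the chain rule. Writing $w(r)=\sqrt{\g(s+r)-\g(s)}$ and $u(r)=\varphi_s^{-1}(w(r))$, so that $w(r)=\varphi_s(u(r))$ and $\tilde\g(r)=u(r)^2$ in the parametrization inherited from the arclength of $\g$, a direct computation using $w'(r)=\g'(s+r)/(2w(r))$ gives
$$\frac{d}{dr}\tilde\g(r)=\frac{u(r)}{\varphi_s(u(r))}\cdot\frac{\g'(s+r)}{\varphi_s'(u(r))}.$$
As $r\to0$ the three factors converge and the derivative tends to $\g'(s)/\varphi_s'(0)^2$, the candidate (unnormalized) tangent at the image of the tip.

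The decisive geometric input is the location of $u(r)$. By \eqref{ineq_2} and the two-sided bound on $\abs{\varphi_s'}$ from Lemma~\ref{lem_u} one has $\abs{w(r)}\asymp\sqrt r$ and hence $\abs{u(r)}\asymp\sqrt r$; and by the remark preceding the statement, $u(r)\in\nabla$, so that $\Im u(r)\asymp\abs{u(r)}\asymp\sqrt r$. I would then estimate the three factors separately. The factor $\g'(s+r)$ deviates from $\g'(s)$ by at most $\o(r)\leq\norm{\g}_{1,\b}\,r^\b$. For $\varphi_s'(u(r))$ I would invoke \eqref{ineq_phi_prime_1} (resp.\ \eqref{ineq_phi_prime_1.5}), which together with $\Im u(r)\asymp\sqrt r$ yields a deviation of order $(\Im u(r))^{2\b}\asymp r^\b$ (resp.\ $r^\b\log(1/r)$). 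For $u(r)/\varphi_s(u(r))$ I would write $\varphi_s(u)-\varphi_s'(0)u=\int_0^u(\varphi_s'(\z)-\varphi_s'(0))\,d\z$, integrate the same bound along the segment from $0$ to $u(r)$, which lies in $\nabla$ by convexity, and conclude $\abs{u(r)/\varphi_s(u(r))-1/\varphi_s'(0)}\lesssim\abs{u(r)}^{2\b}\asymp r^\b$. Since each factor is bounded (again by Lemma~\ref{lem_u}), the product deviates from $\g'(s)/\varphi_s'(0)^2$ by $O(r^\b)$ when $\b<1/2$ and by $O(r^\b\log(1/r))$ when $\b=1/2$.

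Finally I would pass to the arclength parametrization of $\tilde\g$. Because $\abs{d\tilde\g/dr}$ is bounded above and below and in fact converges to $1/\abs{\varphi_s'(0)}^2$, the arclength of $\tilde\g$ is comparable to $r$, the reparametrization preserves the modulus-of-continuity bound, and the unit tangent at $0$ is $\g'(s)/\varphi_s'(0)^2$ normalized to unit length. That this equals $-1$ follows from the branch computation already used in Lemma~\ref{lem_boundary_regularity}: the tangent of $\partial H_s$ at $0$ has argument $\tfrac12\arg(-\g'(s))$, i.e.\ $2\arg\varphi_s'(0)\equiv\arg(-\g'(s))\pmod{2\pi}$, so $\g'(s)/\varphi_s'(0)^2$ is a negative real and $\tilde\g$ is tangentially attached to $\m R_+$. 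This yields $\abs{\tilde\g'(r)+1}\leq C_2 r^\b$ (resp.\ $C_2 r^\b\log(1/r)$), with $C_2$ independent of $s$ since the constant $C_1$ of Proposition~\ref{prop_varphi} and the bounds of Lemma~\ref{lem_u} are. I expect the main obstacle to be the third step---pinning down $u(r)\in\nabla$ with $\Im u(r)\asymp\sqrt r$ and carrying the $s$-uniformity through the integral bound for the factor $u(r)/\varphi_s(u(r))$---rather than the bookkeeping of the final product.
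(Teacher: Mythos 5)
Your proof is correct and takes essentially the same route as the paper: the identical chain-rule factorization $\tilde\g'(r)=\frac{u(r)}{\varphi_s(u(r))}\cdot\frac{\g'(s+r)}{\varphi_s'(u(r))}$ (which the paper writes as $\psi(\G_r)\psi'(\G_r)\g'(s+r)/\G_r$ with $\psi=\varphi_s^{-1}$), the same factor-by-factor comparison via Proposition~\ref{prop_varphi} and the two-sided derivative bounds of Lemma~\ref{lem_u}, and the same use of $\abs{\G_r}\asymp\sqrt r$ together with $\G_r\in\varphi_s(\nabla)$ to convert the exponent $2\b$ into $\b$, followed by the same arclength-reparametrization remark. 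The only cosmetic difference is that you estimate $\varphi_s'$ and integrate along segments in $\nabla$ where the paper transfers the bounds to $\psi'$ and $\psi$, and you spell out the branch computation showing the limit tangent is $-1$, which the paper asserts directly.
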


\begin{proof} It is obvious that the image of $\g[s + \vare, S]$ under $h_s$ is a $C^{1,\b}$ curve. We only need to check that the limit of $\partial_r h_s(\g(s+r))$ as $r \to 0$ is in $\m{R}_-$, with convergence rate $r^\b$ if $\b <1/2$ and $r^{1/2} \log(1/r)$ if $\b = 1/2$.

  We use the same notation 
  $$\log \varphi_s'(z) = u(z) +i v(z)$$
  as before.  
  Set $\psi := \varphi_s^{-1}$, we have $\psi'(z) = 1/\varphi_s'(\psi(z))$.
  Thus
  $$\psi'(0)^2 \g'(s) = \varphi_s'(0)^{-2} \g'(s)= -\exp(-2u(0)) < 0.$$

  For $ 0< \b< 1/2$ and $z \in \varphi_s(\nabla)$, 
  from \eqref{ineq_phi_prime_1} and the boundedness of $|\varphi_s'|$ we have 
    \[\abs{\psi'(z) -\psi'(0)} = \abs{\frac{\varphi_s'(0) - \varphi_s'(\psi(z))}{\varphi_s'(\psi(z)) \varphi_s'(0)}} \leq C_1 \abs{\psi(z)}^{2\b} \leq  \tilde C_2 \abs{z}^{2\b}\] 
  hence 
  \[\abs{\psi(z) - z\psi'(0)} \leq \tilde C_2\abs{z}^{1+2\b}. \]
  We know that 
  \[\abs{\g'(s+r) - \g'(s)} \leq \norm{\g}_{1,\b} \abs{r}^\b \text{ and } \abs{\g(s+r) - \g(s) - r \g'(s)} \leq \norm{\g}_{1,\b} \abs{r}^{1+\b}. \]

By the definition of $R_0,$ we have $\G_r := \sqrt{\g(s+r) - \g(s)} \in \varphi_s(\nabla)$ for all
$r\leq R_0$ with $s+r\leq S$. For such $r,$ the estimate \eqref{ineq_phi_prime_1} yields
    \begin{align*}
    & \abs{\partial_r (h_s (\g(s+r))) - \psi'(0)^2 \g'(s)} \\
    =& 
    \abs{ \psi \left (\G_r \right ) \psi' \left (\G_r \right ) \g'(s+r) /\G_r- \psi'(0)^2 \g'(s) }\\
    \leq &  \abs{\psi(\G_r) - \psi'(0) \G_r}\abs{ \psi'(\G_r)} 
   /\G_r 
    + 
    \abs{\psi'(\G_r) - \psi'(0)} \abs{\psi'(0)} \\
    & + \abs{\g'({s+r}) - \g'(s)} \abs{\psi'(0)}^2  \\
    \leq & \tilde C_2  \left(\abs{\psi'(\G_r)} {\G_r}^{2\b+ 1} /\G_r + \abs{\psi'(0)} \abs{\G_r}^{2\b} + \abs{\psi'(0)}^2 r^\b  \right)
    \leq  C_2 \abs{r}^\b,
    \end{align*}
    since $\psi'$ is uniformly bounded.
    In particular, $\partial_r(h_s(\g(s+r)))|_{r=0} = -\abs{\psi'(0)}^2$ and $r \mapsto h_s(\g(s+r))$ is a $C^{1,\b}$ function. 
    It is easy to see that $\partial_r(h_s(\g(s+r)))$ is bounded away from $0$ and $\infty$, the above estimate suffices to conclude that $\tilde \g = h_s(\g [s,S])$ is also $C^{1,\b}$ when parametrized by arclength.

    In the case $\b = 1/2$, the argument for the behavior at $h_s(\g(s))$ is the same by using the bounds
    $$\abs{\psi'(z) -\psi'(0)} \leq C_1 \abs{z} \log(1/\abs{z}) \text{ and } \abs{\psi(z) - \psi'(0) z} \leq C_1 \abs{z}^{2} \log(1/\abs{z}) $$
 in the above computation of $\abs{\partial_r (h_s(\g(s+r))) - \psi'(0)^2 \g'(s)}.$
The latter of the two inequalities is obtained from an integration. 
\end{proof}

We now turn to the case $1/2< \b \leq 1$. Let $\mu_s$ be the M\"obius transform $\m H \to \m H$ with $\mu_s (0) = 0$, $\mu_s ' (0) = 1$ and $\mu_s''(0) = L_s$. 
\begin{corollary} \label{cor_1.5_2}
The angular limit as $z \to 0$ of $[\mu_s \circ \varphi_s^{-1}]''/[\mu_s \circ \varphi_s^{-1}]'(z)$ is $0$, with the same rate  of convergence as in Proposition~\ref{prop_varphi}. The image 
$\tilde \g$ of $\g[s,(s+R_0) \wedge S]$ 
under the conformal map 
   \[h_s(z) =\left[ \mu_s \circ \varphi_s^{-1} \left (\sqrt{z -\g(s)} \right )\right]^2\]
   satisfies
   \[\o(\d; \tilde \g') \leq \begin{cases} C_2 \d^\b & \text{ if } 1/2 < \b < 1\\
   C_2 \d^\b \log(1/\d) & \text{ if } \b = 1,
   \end{cases}\]
   where $R_0$ and $C_2$ depend only on $\b, R, S$ and $\norm{\g}_{1,\b}$ (in particular do not depend on $s$).
\end{corollary}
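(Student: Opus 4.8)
The plan is to mimic the proof of Corollary~\ref{cor_1_1.5} with the inverse map $\varphi_s^{-1}$ replaced by the Möbius-corrected map $F := \mu_s \circ \varphi_s^{-1}$. The whole point of the factor $\mu_s$ is to cancel the non-vanishing second order term of $\varphi_s$ at $0$ (measured by $L_s$): for $\b>1/2$ this term would otherwise produce an error of order $r^{1/2}$, worse than the target $r^\b$, whereas after correction $F$ has a \emph{vanishing} pre-Schwarzian at the origin and the error improves to $r^\b$.

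\textbf{First assertion.} Writing $N_f := f''/f' = (\log f')'$ for the nonlinearity of a locally univalent map, I would use the composition rule $N_{g\circ h} = N_h + (N_g\circ h)\,h'$ and the inversion rule $N_{\varphi_s^{-1}}(\varphi_s(w)) = -N_{\varphi_s}(w)/\varphi_s'(w)$. Combining them for $F = \mu_s\circ\varphi_s^{-1}$ and evaluating at $z=\varphi_s(w)$ gives the identity
\[
\frac{F''}{F'}\big(\varphi_s(w)\big) = \frac{N_{\mu_s}(w) - N_{\varphi_s}(w)}{\varphi_s'(w)}.
\]
Since $\mu_s$ is Möbius with $\mu_s(0)=0,\ \mu_s'(0)=1,\ \mu_s''(0)=L_s$, one computes $N_{\mu_s}(w)=L_s/(1-\tfrac{L_s}{2}w)$, so $N_{\mu_s}(w)-L_s = O(|w|)$; and by the definition \eqref{eq_Ls} of $L_s$ together with Proposition~\ref{prop_varphi} one has $N_{\varphi_s}(0)=L_s$ with $|N_{\varphi_s}(w)-L_s|$ bounded by \eqref{ineq_phi_prime_2}--\eqref{ineq_phi_prime_2.5}. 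Writing the numerator as $[N_{\mu_s}(w)-L_s]-[N_{\varphi_s}(w)-L_s]$, on the cone $|x|\leq y$ the first bracket is $O(y)$ and the second is $O(y^{2\b-1})$ (resp. $O(y\log(1/y))$ when $\b=1$); since $y\leq y^{2\b-1}$ for $\b<1$ the second dominates. As $1/\varphi_s'$ is bounded above and below by Lemma~\ref{lem_u} and $|\varphi_s^{-1}(z)|\asymp|z|$, this yields the first assertion with the claimed rate. Uniformity in $s$ comes from the uniform bound on $L_s$ afforded by \eqref{eq_Ls} and $|v(r)-v(0)|\leq C(r^{2\b}\wedge 1)$, whose integral converges precisely because $\b>1/2$.

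\textbf{Regularity of $\tilde\g$.} Integrating the nonlinearity bound from $0$ to $z$ along the image under $\varphi_s$ of a radial segment in the (convex) cone $\nabla$, and using $\Im w\asymp|w|\asymp|z|$ there, I get
\[
|F'(z)-F'(0)| \leq C|z|^{2\b}, \qquad |F(z)-F'(0)z| \leq C|z|^{1+2\b}
\]
(with an extra $\log(1/|z|)$ when $\b=1$), the exact analogues, now with exponent $2\b>1$, of the bounds on $\varphi_s^{-1}$ used in Corollary~\ref{cor_1_1.5}. From here the argument is verbatim that of that corollary: with $\G_r:=\sqrt{\g(s+r)-\g(s)}\in\varphi_s(\nabla)$ for $r\leq R_0$ one has $h_s(\g(s+r))=F(\G_r)^2$, hence
\[
\partial_r\big(h_s(\g(s+r))\big) = \frac{F(\G_r)\,F'(\G_r)\,\g'(s+r)}{\G_r},
\]
whose limit as $r\to 0$ is the fixed negative real $F'(0)^2\g'(s)$ (tangential attachment at $0$). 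Splitting the difference from this limit into the three telescoping terms governed by $F(\G_r)/\G_r-F'(0)$, $F'(\G_r)-F'(0)$ and $\g'(s+r)-\g'(s)$, and using $|\G_r|\asymp\sqrt r$ from \eqref{ineq_2}, each term is $O(r^\b)$ (resp. $O(r^\b\log(1/r))$). Thus $r\mapsto h_s(\g(s+r))$ is $C^{1,\b}$ (resp. weakly $C^{1,1}$) with derivative bounded away from $0$, and passing to arclength yields $\o(\d;\tilde\g')\leq C_2\d^\b$ (resp. $C_2\d^\b\log(1/\d)$), with all constants depending only on $\b,R,S,\norm{\g}_{1,\b}$.

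\textbf{Main obstacle.} The delicate points are the bookkeeping of the nonlinearity under composition and inversion — this is exactly what makes the Möbius correction cancel the $L_s$-term — and the upgrade from the pointwise estimate on $F''/F'$ to the bounds on $F'$ and $F$, which must be carried out by integrating along a path kept inside $\varphi_s(\nabla)$ where Proposition~\ref{prop_varphi} applies. Keeping every constant independent of $s$, in particular the uniform bound on $L_s$ where the hypothesis $\b>1/2$ enters, is the other thing to watch.
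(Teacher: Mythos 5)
Your first assertion is handled exactly as in the paper: the identity $0=N_{\varphi_s\circ\psi}(z)=N_{\psi}(z)+N_{\varphi_s}(\psi(z))\psi'(z)$ with $\psi=\varphi_s^{-1}$ and $N_f=f''/f'$ is precisely the paper's starting point, and your splitting of the numerator as $[N_{\mu_s}(w)-L_s]-[N_{\varphi_s}(w)-L_s]$, with the first bracket $O(\abs{w})$ (this is the paper's remainder $R_1$) and the second controlled by \eqref{ineq_phi_prime_2}--\eqref{ineq_phi_prime_2.5}, is the same computation, so that part is fine.

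The second part, however, has a genuine gap. The corollary asserts a bound on the full modulus of continuity $\o(\d;\tilde\g')$, i.e.\ on $\abs{\tilde\g'(r)-\tilde\g'(r')}$ for \emph{all} pairs $r,r'\leq R_0$, whereas your argument --- integrating the nonlinearity bound radially from $0$ and telescoping against the basepoint values $F'(0)$, $F(\G_r)/\G_r\to F'(0)$, $\g'(s)$ --- only yields the one-point estimate $\abs{\tilde\g'(r)-\tilde\g'(0)}\leq C r^{\b}$. A Hölder bound anchored at $0$ does not imply a Hölder modulus: from your estimates the triangle inequality gives only $\abs{\tilde\g'(r)-\tilde\g'(r')}\lesssim \max(r,r')^{\b}$, which is much weaker than $\abs{r-r'}^{\b}$ when $\abs{r-r'}\ll r$ (compare $r\mapsto r^{\b}\sin(1/r)$, which satisfies the one-point bound but whose modulus of continuity is only of order $\d^{\b/2}$). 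This cannot be waved away as ``verbatim as in Corollary~\ref{cor_1_1.5}'': that proof likewise only ever compares with the value at $0$, which suffices there because for $\b\leq 1/2$ only the tangent direction at the tip enters Lemma~\ref{lem_vertical_curve} via \eqref{ineq_key}; here, by contrast, the genuine two-point modulus is exactly what Lemma~\ref{lem_Ls_bound} consumes when applied to the mapped-out curves (its bound on $w_s$ rests on interior differences of the derivative of the curve, through Lemma~\ref{lem_boundary_regularity}), and this is what ultimately gives continuity of $s\mapsto L_s$ and the $C^{1,\b-1/2}$ conclusion of Theorem~\ref{thm_main}. The paper flags this explicitly (``unlike Corollary~\ref{cor_1_1.5}, we need to bound in addition the modulus of continuity of $\tilde\g'$'') and supplies the missing two-point estimates: from $\abs{\phi''(z)}\leq C''\abs{z}^{2\b-1}$, valid throughout the cone since the nonlinearity bound of Proposition~\ref{prop_varphi} holds there and $\phi'$ is bounded, it integrates along segments kept inside $\varphi_s(\nabla)$ --- which forces choosing $R_0$ so that the convex hull of $\{\G_r:\ r\leq R_0\}$ lies in $\varphi_s(\nabla)$ --- to obtain $\abs{\phi'(z+h)-\phi'(z)}\leq C'''\abs{h}(\abs{z}+\abs{h})^{2\b-1}$ and $\abs{\phi(z_1)/z_1-\phi(z_2)/z_2}\leq C'''\abs{z_1-z_2}(\abs{z}+\abs{z_1-z_2})^{2\b-1}$; combining these with $\abs{\G_r-\G_{r'}}\leq c\abs{r-r'}/\sqrt{r}$ (from \eqref{ineq_2}) and $r^{\b-1}\leq\abs{r-r'}^{\b-1}$ (here $\b<1$ is used; $\b=1$ gives the logarithmic correction) yields $\abs{\partial_r h_s(\g(s+r))-\partial_r h_s(\g(s+r'))}\leq C_2\abs{r-r'}^{\b}$ uniformly in $s$. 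Your ingredients (the nonlinearity bound on the whole cone, boundedness of $F'$) do suffice for this repair, but the step is absent from your write-up, and it is the actual content of the corollary beyond Corollary~\ref{cor_1_1.5}.
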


\begin{proof}
  We first check that $[\mu_s \circ \varphi_s^{-1}]''/[\mu_s \circ \varphi_s^{-1}]'(z)$ has angular limit $0$. Again denoting $\psi = \varphi_s^{-1}$, we have 
  \[ 0 = [\varphi_s \circ \psi]''/[\varphi_s \circ \psi]' (z) = [\varphi_s''/\varphi_s' (\psi(z)) ]\psi'(z) + \psi''/\psi'(z) .  \]
   For $1/2<\b<1$ and $z \in \varphi_s(\nabla)$,
  \begin{align*}
   & \abs{[\mu_s \circ \varphi_s^{-1}]''/[\mu_s \circ \varphi_s^{-1}]'(z) } \\
  = &\abs{\mu_s''/\mu_s'(\psi(z)) \psi'(z) + \psi''/\psi'(z)} \\
  = &\abs{\left[L_s + R_1\left(z\right)\right] \psi'(z) -   \varphi_s''/\varphi_s'(\psi(z)) \psi'(z) }\\
  = & \abs{\left[L_s + R_1\left(z\right)\right] \psi'(z) -  \left[L_s + C_1 \left(\abs{\psi(z)}^{2\b -1}\right) + R_2(\abs{z}^{2\b -1}) \right]  \psi'(z) }
  \\
  \leq & C' \abs{z}^{2\b -1},
  \end{align*}
where  $\abs{R_1(z)/z}$ is uniformly bounded on $\varphi_s(\nabla)$, $s\in [0,S]$; $\abs{R_2(z)/z^{2\b -1}} \to 0$ uniformly as $z \to 0$ in $\varphi_s(\nabla)$, and $C' > 0$ does not depend on $s$. It yields the angular limit $0$ with convergence rate as in Proposition~\ref{prop_varphi}. 
  
  The analysis of the behavior of $\abs{\tilde \g' (r)  - \tilde \g'(0)}$ near $0$ is the same as in Corollary~\ref{cor_1_1.5}. But unlike Corollary~\ref{cor_1_1.5},  we need to bound in addition the modulus of continuity of $\tilde \g'$ on a small neighborhood of $0$. To this end, we first estimate the Lipschitz constant of $\phi(z)/z$ where $\phi(z) = \mu_s \circ \varphi_s^{-1}$.
  
  Since $\phi'(z), z\in \varphi_s(\nabla)$ is bounded by a constant independently of $s$, we have 
  $$\abs{\phi''(z)} \leq C'' \abs{z}^{2\b -1}. $$
   Hence for $z, h \in \m C$ such that the segment $[z , z+h] \subset \varphi_s(\nabla)$,
  \[\abs{\phi'(z + h) - \phi'(z)}\leq C'' \int_0^{\abs{h}} (\abs z + u)^{2\b -1} \dd u 
  \leq C'''\abs{h} (\abs z + \abs h)^{2\b -1}. \]
  For $z_1, z_2 \in \varphi_s(\nabla)$ such that $[tz_1, t z_2] \subset \varphi_s(\nabla)$ for all $t\in [0,1]$, 
  \begin{align*}
  \abs{\frac{\phi(z_1)}{z_1} - \frac{\phi(z_2)}{z_2}} \leq 
  & \int_0^1 \abs{\phi'(tz_1) - \phi'(tz_2)} \dd t \\
  \leq & C''' \int_0^1 t^{2\b } \abs{z_1 - z_2} (\abs z + \abs {z_1 - z_2})^{2\b -1} \dd t \\
  \leq & C''' \abs{z_1 - z_2} (\abs z + \abs {z_1 - z_2})^{2\b -1}. 
  \end{align*}
  Now the analysis of $\tilde \g'$  is straightforward: write $\G_r := \sqrt{\g(s+r) -\g(s)}$ for simplicity,
   \begin{align*}
    \partial_r h_s (\g(s+r))
    &= \phi(\G_r) \phi'(\G_r) \g'(s+r)/\G_r.    \end{align*}
If $0 <r' < r <R$, 
     $$\abs{ \G_{r}- \G_{r'} } = \abs{(\g(s+r) - \g(s+r'))/( \G_{r}+\G_{r'} )} \leq c \abs{r-r'} /\sqrt{r},$$
     since $\G_r \geq \sqrt{4r/5}$ (see \eqref{ineq_2}).
     Now we choose furthermore $0< R_0 \leq R$ such that for all $s$, the convex hull of $\{\G_{r} ; r\leq R_0 \}$ is contained in $ \varphi_s(\nabla)$. Thus for every $r, r' \leq R_0$, $t\in [0,1]$, the segment $[t\G_{r}, t\G_{r'}]$ is in $\varphi_s (\nabla)$. Hence 
     \begin{align*}
      & \abs{\partial_r h_s (\g(s+r)) -  \partial_r h_s (\g(s+r'))} \\
      \leq & \abs{\phi(\G_r) \phi'(\G_r) \g'(s+r)/\G_r - \phi(\G_{r'}) \phi'(\G_{r'}) \g'(s+r')/\G_{r'}} \\
      \leq &C \left( \abs{\phi(\G_r)/\G_r - \phi(\G_{r'})/\G_{r'}} + \abs{\phi'(\G_r) - \phi'(\G_{r'})} + \abs{\g'(s+r) - \g'(s+r')}\right) \\
      \leq & C_3 \left[ \abs{\G_r- \G_{r'}}  (\abs{\G_r} + \abs{\G_r - \G_{r'}})^{2\b-1}+ \abs{r -r'}^{\b} \right] \\
      \leq & C_4 \left[\frac{\abs{r-r'}}{\sqrt r} (\sqrt r + \frac{\abs{r-r'}}{\sqrt r} )^{2\b-1} + \abs{r -r'}^{\b}\right] \\
     \leq  & C_4 \left[\frac{\abs{r-r'}}{\sqrt r} (2 \sqrt r)^{2\b-1} + \abs{r -r'}^{\b}\right]
      \leq  C_2 \abs{r-r'}^\b,
      \end{align*}
where all constants do not depend on $s$. We also used the fact that $\abs{r-r'} \leq \abs r$, and $r^{\b-1} \leq \abs{r-r'}^{\b-1}$ since $1/2 < \b <1$.
  
  The case $\b = 1$ is similar.
\end{proof}

\subsection{The driving function of the initial bit of the curve.}\label{sec_driving_init}

In this subsection we study the driving function of $\eta$ in a neighborhood of $0$. By comparing to an affine line (Corollary~\ref{cor_Re}, Lemma~\ref{lem_driver_comparison}), we deduce that $W_t$ is bounded above by constant times $\Re \eta(t)$ that is again comparable to $\Im(\eta (t)) \sqrt t^{2\b} \approx t^{\b +1/2}$ (Lemma~\ref{lem_vertical_curve}).

  \begin{lemma}[{\cite[Sec.~4.1]{KNK2004exact}}]\label{lem_straight}
     Let $0 \leq \t \leq \pi /4 $. There exists $k = k(\t) \leq  (16/\sqrt{3} \pi)\t$ such that the straight line $\eta = \{r e^{i (\pi/2 - \t)}, r \geq 0\}$ has the Loewner driving function $t\mapsto  k(\t) \sqrt{t}$,  and the capacity parametrized line $(\eta(t))_{t \geq 0}$ satisfies
     $$\eta(t) = B(k)\sqrt{t},$$
     where $\abs{B(k)}\geq 2$ and $\abs{B(k)} \to 2$ as $\t \to 0$. 
     \end{lemma}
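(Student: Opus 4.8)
The plan is to combine the scale-invariance of the driving function $W_t=k\sqrt t$ with an explicit solution of the resulting self-similar Loewner equation. First I would record that $W_t = k\sqrt t$ is invariant under the Brownian scaling $W_t \mapsto \lambda^{-1}W_{\lambda^2 t}$ of Loewner chains, so the whole flow is self-similar: writing $g_t(z) = \sqrt t\,G(z/\sqrt t)$ for the hydrodynamically normalized mapping-out function, the hulls satisfy $K_t = \sqrt t\,K_1$, and the trace, if it exists, is forced to be the ray $\eta(t) = B\sqrt t$ with $B = \eta(1)$. Existence of the trace as a simple curve for $W_t = k\sqrt t$ is classical, but can also be read off the explicit solution below, so I would not belabor it.

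Plugging the ansatz $g_t(z)=\sqrt t\,G(u)$, $u=z/\sqrt t$, into the LDE $\partial_t g_t = 2/(g_t - W_t)$ turns it into the autonomous ODE $u G'(u) = G(u) - \frac{4}{G(u) - k}$, with normalization $G(u)=u+2/u+o(1/u)$ at infinity. Separating variables and using the factorization $G^2 - kG - 4 = (G - r_1)(G - r_2)$, $r_{1,2} = \tfrac12(k\pm\sqrt{k^2+16})$ (note $r_1 r_2 = -4$, so $r_1>0>r_2$), I would integrate by partial fractions to obtain the explicit inverse map $u = (G - r_1)^{a}(G - r_2)^{b}$, where $a = \frac{\sqrt{k^2+16}-k}{2\sqrt{k^2+16}}$, $b = \frac{\sqrt{k^2+16}+k}{2\sqrt{k^2+16}}$ and $a+b=1$ (the constant of integration being fixed by the normalization at infinity). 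The tip corresponds to $G = k$ (since $g_t(\eta(t)) = W_t$), which yields $B = (k-r_1)^a(k-r_2)^b$. Writing $M=\sqrt{k^2+16}$, one has $k - r_1 = -\tfrac12(M-k)<0$ and $k-r_2 = \tfrac12(M+k)>0$, so the branch of the power contributes a phase $e^{i\pi a}$ and gives $\arg B = \pi a = \frac{\pi}{2}\big(1 - \frac{k}{M}\big)$.

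Matching the trace direction to the prescribed line yields $\frac{\pi}{2}-\theta = \arg B$, i.e. $\theta = \frac{\pi k}{2\sqrt{k^2+16}}$, which inverts to $k(\theta) = \frac{8\theta/\pi}{\sqrt{1-4\theta^2/\pi^2}}$. For $\theta\le\pi/4$ the denominator is at least $\sqrt 3/2$, giving the claimed bound $k(\theta)\le \frac{16}{\sqrt3\,\pi}\theta$. For the modulus I would use $\tfrac12(M-k)\cdot\tfrac12(M+k) = \tfrac14(M^2-k^2) = 4$: setting $P = \tfrac12(M+k)\ge 2$, a short computation gives $\log|B| = \log 2 + \frac{k}{M}\log(P/2)\ge \log 2$ since $k\ge0$ and $P\ge2$, hence $|B(k)|\ge 2$, with equality (and $|B|\to2$) exactly as $k\to 0$, i.e. $\theta\to0$.

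The step I expect to be the main obstacle is the geometric identification in the ODE step: one must verify, with care about the branch of $(G-r_1)^a(G-r_2)^b$ and the boundary correspondence, that the explicit map really uniformizes $\m{H}$ minus a \emph{straight} segment (and not, say, a logarithmic spiral) and that its complement closes up to the single ray at angle $\arg B$. Once this branch-and-boundary bookkeeping is settled — equivalently, once the self-similar hull is confirmed to be a segment — all the quantitative claims (the formula for $k(\theta)$, the upper bound, and $|B|\ge2$ with $|B|\to2$) follow from the elementary algebra above.
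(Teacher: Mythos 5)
Your proposal is correct, and it is in substance a self-contained re-derivation of what the paper merely cites: the paper's proof of this lemma consists of quoting the explicit Kager--Nienhuis--Kadanoff formulas $\theta(k)=\frac{\pi}{2}\frac{k}{\sqrt{k^2+16}}$ and $B(k)=2\big(\frac{M+k}{M-k}\big)^{k/2M}e^{i(\pi/2-\theta)}$ (with $M=\sqrt{k^2+16}$) and then doing only the inversion and bounding algebra, which you reproduce identically ($k=8\theta/\sqrt{\pi^2-4\theta^2}\leq(16/\sqrt{3}\pi)\theta$ for $\theta\leq\pi/4$, and $|B|\geq2$ with $|B|\to2$). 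Your derivation of the quoted formulas checks out: the ansatz $g_t(z)=\sqrt{t}\,G(z/\sqrt{t})$ does give $uG'=G-4/(G-k)$, the partial-fraction exponents are $a=(M-k)/2M$, $b=(M+k)/2M$ with $ar_1+br_2=0$ (so the integration constant $1$ is consistent with $G(u)=u+2/u+o(1/u)$), and your modulus $|B|=2\big(\frac{M+k}{4}\big)^{k/M}$ agrees with the paper's expression because $(M-k)(M+k)=16$. The one step you flag but leave open --- that the self-similar hull is a \emph{straight} segment --- is exactly the content the paper outsources to the citation, and it closes quickly inside your framework: for $x\in(r_2,r_1)$ the factors of $F(G)=(G-r_1)^a(G-r_2)^b$ have constant arguments $\pi a$ and $0$, so $F$ maps $(r_2,r_1)$ into the ray $e^{i\pi a}[0,\infty)$; moreover $F'/F=(G-k)/\big((G-r_1)(G-r_2)\big)$ shows $|F|$ increases on $(r_2,k)$ and decreases on $(k,r_1)$ with $F(r_2)=F(r_1)=0$, so the boundary image is $\m R$ together with the segment $[0,B]$ traversed twice, and $F$ maps $\m H$ conformally onto $\m H\setminus[0,B]$. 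Together with the observation that $k(\pi/4)=4/\sqrt{3}<4$ (so the driver is in the simple-curve regime), this turns your outline into a complete proof; what the paper's route buys is brevity, what yours buys is independence from the reference.
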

     
     \begin{proof}
     From the explicit computations in \cite{KNK2004exact}, we have that the Loewner curve $\eta$ generated by $t\to k\sqrt{t}$ is the ray with argument $\pi/2 - \t(k)$, where 
     $$\theta (k) = \frac{\pi}{2} \frac{k}{\sqrt{k^2+16}}.$$
     The capacity parametrization of $\eta$ is also explicit:
     $$\eta(t) = B(k) \sqrt{t},$$
     where 
     \begin{align*}
     B(k) &= 2 \left(\frac{\sqrt{k^2+16} +k}{\sqrt{k^2+16} -k}\right)^{\dfrac{k}{2\sqrt{k^2+16}}} \exp(i(\pi/2-\t(k))) \\ 
     &= 2  \left(\frac{\pi/2 +\t(k)}{\pi/2 -\t(k)}\right)^{ \t(k) /\pi} \exp\left(i(\pi/2-\t(k))\right) \\
     &=(2+O(k^2))\exp\left(i(\pi/2-\t(k))\right).
 \end{align*}
  We see that $\abs{B(k)} \geq 2$ and the claimed convergence as $k\to 0$.
 
  For every $0\leq \t \leq \pi/4$, we have
  $$k^2+16 = (\pi/2\t)^2 k^2$$
  which implies 
  $$k = 8\t/\sqrt{\pi^2 - 4\t^2} \leq (16 /\sqrt{3} \pi) \t $$
  as claimed.
  \end{proof}
  
  \begin{corollary}\label{cor_Re}
There is a universal constant $C >0$ such that for all $0 \leq \abs{x}\leq y$, the image of $x+iy$ under the mapping-out function $g$ of the segment $\eta = [0, x + iy]$ satisfies
$$ \abs{g(x+ iy)} \leq C \abs{x}. $$
\end{corollary}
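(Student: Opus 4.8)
The plan is to recognize the segment $\eta=[0,x+iy]$ as an initial piece of a straight ray through the origin and then invoke Lemma~\ref{lem_straight}, whose explicit formulas reduce the whole statement to a one-line comparison of two trigonometric quantities. First I would remove the sign of $x$: since the reflection $z\mapsto-\bar z$ preserves $\m H$, maps $[0,x+iy]$ to $[0,-x+iy]$ and conjugates the corresponding mapping-out functions, both $|g(x+iy)|$ and $|x|$ are invariant under $x\mapsto -x$, so I may assume $x\ge0$. Then $\arg(x+iy)\in[\pi/4,\pi/2]$, and setting $\theta:=\pi/2-\arg(x+iy)\in[0,\pi/4]$ the segment lies on the ray $\{re^{i(\pi/2-\theta)}:r\ge0\}$.

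Next I would read off the explicit data. The segment $\eta=[0,x+iy]$ is precisely the prefix $\eta[0,T]$ of that ray up to its capacity $T$ (the Loewner flow of a prefix depends only on the prefix), so Lemma~\ref{lem_straight} applies verbatim: the driving function is $t\mapsto k(\theta)\sqrt t$ with $k(\theta)\le(16/\sqrt3\,\pi)\theta$, and the capacity parametrization is $\eta(t)=B(k)\sqrt t$ with $|B(k)|\ge2$. Evaluating at the terminal time gives
\[
g(x+iy)=W_T=k(\theta)\sqrt T,\qquad x+iy=\eta(T)=B(k)\sqrt T,
\]
and eliminating $\sqrt T$ yields
\[
\bigl|g(x+iy)\bigr|=k(\theta)\,\frac{|x+iy|}{|B(k)|}\le\frac{k(\theta)}{2}\,|x+iy|.
\]

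Finally I would relate $|x+iy|$ back to $x$. Since $x=\Re(x+iy)=|x+iy|\cos(\pi/2-\theta)=|x+iy|\sin\theta$, and $\sin\theta\ge(2/\pi)\theta$ on $[0,\pi/4]$ by Jordan's inequality, we get $|x+iy|=x/\sin\theta\le(\pi/2\theta)\,x$. Combining this with the bound on $k(\theta)$,
\[
\bigl|g(x+iy)\bigr|\le\frac{1}{2}\cdot\frac{16}{\sqrt3\,\pi}\,\theta\cdot\frac{\pi}{2\theta}\,x=\frac{4}{\sqrt3}\,x,
\]
so $C=4/\sqrt3$ works and is universal. I do not expect any genuine obstacle: the content is entirely carried by Lemma~\ref{lem_straight}, and the only two points needing a word of care are the reflection argument that disposes of the sign of $x$ and the observation that the finite segment inherits both the driving function and the capacity parametrization of the full ray, so that the $\theta$-factor in $k(\theta)$ cancels against the $\theta$ coming from $\sin\theta$ and leaves a dimensionless constant.
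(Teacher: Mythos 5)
Your proof is correct and takes essentially the same route as the paper's: both reduce to $x\geq 0$, use the explicit formulas of Lemma~\ref{lem_straight} to write $g(x+iy)=k(\theta)\sqrt{T}$ and $\abs{B(k)}\sqrt{T}=\abs{x+iy}$ with $\abs{B(k)}\geq 2$, and then convert the factor $\theta\abs{x+iy}$ into a multiple of $x$ via a linear lower bound on $\sin\theta$ (you use Jordan's inequality $\sin\theta\geq (2/\pi)\theta$ where the paper uses $\theta\leq 2\sin\theta$, which only changes the universal constant). The sole cosmetic difference is that you spell out the reflection $z\mapsto-\bar z$ behind the paper's ``without loss of generality $x\geq 0$.''
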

\begin{proof} Without loss of generality, assume that $x \geq 0$. Let $l = \sqrt{x^2 + y^2}$, $T = cap(\eta)$, $\t = \arctan(x/y)$ and $k = k(\t)$. We know that
   $$\abs{B(k)} \sqrt{T} = \abs{x+iy} = l$$
   and therefore
   $$T = l^2 / \abs{B(k)}^2 \leq l^2/ 4. $$
   By definition of the driving function,
   $$g(x+iy) = k \sqrt{T} \leq \frac{16 \t}{\sqrt{3} \pi}  \frac{l}{2} = \frac{8}{\sqrt{3} \pi} \t l \leq \frac{8}{\sqrt{3} \pi} 2 \sin (\t) l = \frac{16}{\sqrt{3} \pi} x, $$
   where we have used $\t \leq \pi/4$.
\end{proof}

   \begin{figure}
 \centering
 \includegraphics[width=0.8\textwidth]{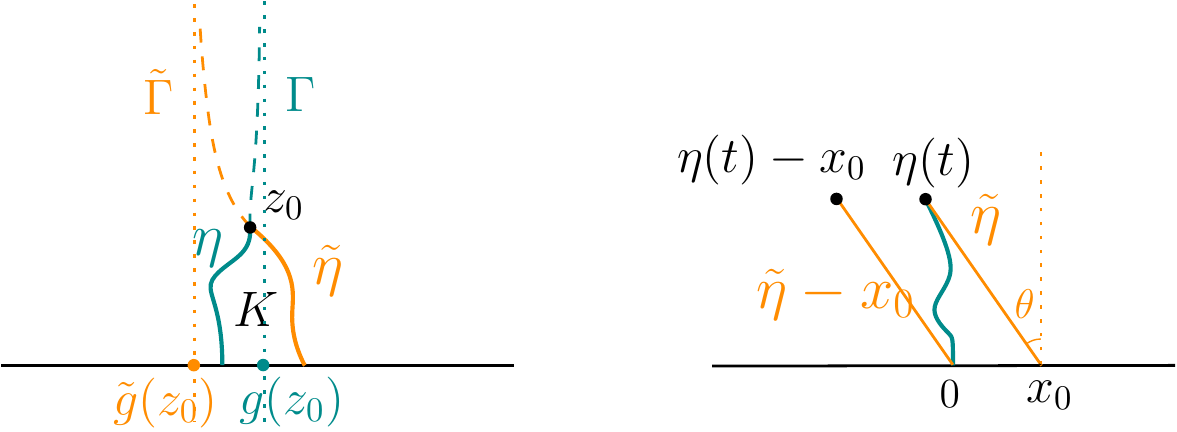}
 \caption{\label{comp} Left: The dashed line $\G$ ($\tilde \G$) is the hyperbolic geodesic between $z_0$ and $\infty$ in the domain $\m H \setminus \eta$ ($\m H \setminus \tilde \eta$) and dotted lines are their vertical asymptotes as in the proof of Lemma~\ref{lem_driver_comparison}. Right: Curves in the proof of Lemma~\ref{lem_vertical_curve}.} 
 \end{figure}

  \begin{lemma} \label{lem_driver_comparison} 
  Let $K$ be a compact $\m H$-hull whose boundary is a Jordan curve, and let $z_0 \in \partial K \cap \m H$. Denote $\eta$ (resp. $\tilde{\eta}$) the left (resp. right) boundary of $K$ connecting $\m{R}$ and $z_0$, and let $g$ and $\tilde{g}$ be their mapping-out functions. Then we have $g(z_0) \geq \tilde{g}(z_0)$.
  \end{lemma}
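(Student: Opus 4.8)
The plan is to reinterpret each of the two values $g(z_0)$ and $\tilde g(z_0)$ as the position of a \emph{vertical asymptote} of a hyperbolic geodesic, and then to compare these two asymptotes by sandwiching them between the single value attached to the full hull $K$. First I would record the asymptote description. Let $\Gamma$ be the hyperbolic geodesic joining $z_0$ to $\infty$ in $\m H\setminus\eta$. Since $g$ maps $\m H\setminus\eta$ conformally onto $\m H$ and sends the prime ends $z_0,\infty$ to $g(z_0)\in\m R$ and $\infty$, it carries $\Gamma$ to the vertical half-line $\{g(z_0)+iy:\,y>0\}$. Because $g$ is hydrodynamically normalized, $g(z)-z\to0$ as $z\to\infty$, so along $\Gamma$ one has $\Re\,\Gamma\to g(z_0)$; that is, $\Gamma$ is asymptotic to the vertical line $\{\Re=g(z_0)\}$. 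The same reasoning shows that the geodesic $\tilde\Gamma$ from $z_0$ to $\infty$ in $\m H\setminus\tilde\eta$ has vertical asymptote $\{\Re=\tilde g(z_0)\}$ (this is exactly the picture in Figure~\ref{comp}). Thus the claim $g(z_0)\geq\tilde g(z_0)$ says that the asymptote of $\Gamma$ lies weakly to the right of that of $\tilde\Gamma$.

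Next I would bring in the mapping-out function $g_K:\m H\setminus K\to\m H$ of the whole hull and its tip value $x_0:=g_K(z_0)$, which is well defined because $\partial K$ is a Jordan curve and hence $z_0$ is a single prime end of the exterior domain $\m H\setminus K$. The goal becomes the sandwich $\tilde g(z_0)\leq x_0\leq g(z_0)$. For the right-hand inequality I would factor $g_K=g_{K'}\circ g$, where $K':=g(K\setminus\eta)$ is the image hull obtained after mapping out the left boundary $\eta$; the composition is hydrodynamically normalized and maps out $K$, so it equals $g_K$ by uniqueness. Since $\eta$ is the \emph{left} boundary of $K$, the remaining piece $K\setminus\eta$ lies to the right of $\eta$, so after mapping out $\eta$ the hull $K'$ is attached to $\m R$ with left endpoint exactly $q:=g(z_0)$, and the exterior prime end of $z_0$ is carried to $q$ approached from the left.

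It then remains to use the elementary pushing property of mapping-out functions: for a hull $B$ with footprint $[c,d]$ one has the Nevanlinna-type representation $g_B(x)-x=\int \frac{\mu(du)}{x-u}$ with $\mu\ge0$ supported in $[c,d]$, whence $g_B(x)\leq x$ for every real $x\leq c$, and in the limit $g_B(c^-)\leq c$. Applying this to $B=K'$ and $c=q$ gives $x_0=g_K(z_0)=g_{K'}(q^-)\leq q=g(z_0)$. The left-hand inequality $\tilde g(z_0)\leq x_0$ is the mirror statement: mapping out the right boundary $\tilde\eta$ first, the piece $K\setminus\tilde\eta$ lies to the left of $\tilde\eta$, so its image is attached with right endpoint $\tilde q:=\tilde g(z_0)$ and the corresponding base point is pushed weakly to the right, giving $x_0\geq\tilde q$. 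Combining the two inequalities yields $g(z_0)\geq\tilde g(z_0)$.

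The only genuinely delicate point, and the step I expect to be the main obstacle, is the bookkeeping of prime ends at $z_0$: one must verify that the exterior of $K$ near $z_0$ sits on the correct side of $\eta$ (respectively $\tilde\eta$), so that after mapping out, the relevant prime end of $z_0$ is approached from the left of $q$ (respectively the right of $\tilde q$). This is precisely where the hypotheses that $\partial K$ is a Jordan curve and that $\eta,\tilde\eta$ are its left and right boundary arcs enter, guaranteeing that $z_0$ is a single exterior prime end and fixing the sides unambiguously.
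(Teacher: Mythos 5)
Your proposal is correct, but it takes a genuinely different route from the paper's. You share the opening observation (which, incidentally, your argument never actually uses) that $g(z_0)$ and $\tilde g(z_0)$ are the positions of the vertical asymptotes of the hyperbolic geodesics $\Gamma$ and $\tilde\Gamma$ from $z_0$ to $\infty$ in $\m H\setminus\eta$ and $\m H\setminus\tilde\eta$. The paper then compares the two geodesics directly by a harmonic-measure (Brownian motion) argument: $z$ lies on $\Gamma$ iff Brownian motion from $z$ is equally likely to first hit $\partial_-(\eta)$ or $\partial_+(\eta)$, and since every path hitting $\partial_-(\tilde\eta)$ has already hit $\partial_-(\eta)$ but not $\partial_+(\eta)$, all of $\tilde\Gamma\setminus K$ lies weakly to the left of $\Gamma$; comparing asymptotes gives $\tilde g(z_0)\leq g(z_0)$. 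You instead sandwich through the full hull: factoring $g_K=g_{K'}\circ g$ and using the classical fact that mapping-out functions push real points away from the hull, you obtain $\tilde g(z_0)\leq g_K(z_0)\leq g(z_0)$. Your route is purely deterministic complex analysis, produces the extra middle quantity $g_K(z_0)$, and correctly isolates the prime-end bookkeeping at $q=g(z_0)$ as the delicate point (the Jordan-curve hypothesis does resolve it, as you say); the paper's route trades that bookkeeping for a soft probabilistic comparison of geodesics. One repair is needed: the representation you invoke, $g_B(x)-x=\int \mu(du)/(x-u)$ with $\mu\geq 0$ supported on the footprint $[c,d]$ of $B$, is false as stated — for a vertical slit the footprint is a single point, yet $g_B(z)-z$ is not of the form $m/z$, and indeed $g_B$ need not extend holomorphically to $\m H$ at all. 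The clean Herglotz representation is for the inverse: $g_B^{-1}(w)=w+\int \mu(du)/(u-w)$ with $\mu\geq 0$ compactly supported on the real interval swallowed by the hull, from which your inequality follows by putting $w=g_B(x)$ for real $x\leq c$ (then $u-w>0$ on the support, so $x=g_B^{-1}(w)\geq w=g_B(x)$); alternatively one can simply cite the standard monotonicity of mapping-out functions on the real line. With that substitution the argument is complete and correct.
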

  \begin{proof}

 Recall that the mapping-out function $g$ of $\eta$ satisfies $g(z) = z + o(1)$. The hyperbolic geodesic $\G$ in $\m H \setminus \eta$ between $z_0$ and $\infty$ is the image of $g(z_0) + i\m{R}$ under $g^{-1}$. Hence $\G$ has the vertical asymptote   $g(z_0) + i\m{R}$. In other words, we can read off $g(z_0)$ from the geodesic.
 Let $\partial_-(\eta)$ (resp. $\partial_+(\eta)$) be the boundary of $\m H \setminus \eta$ between $z_0$ and $-\infty$ (resp. between $z_0$ and $+\infty$).
 The complement of $\G \cup \eta$ in $\m{H}$ has two connected components, $H_-(\eta)$ and $H_+(\eta)$, whose boundaries contain  $\partial_-(\eta)$ and $\partial_+(\eta)$ respectively.
 
For $z \in \m H$, let $B_z$ be a Brownian motion starting from $z$. By the conformal invariance of Brownian motion, $z \in H_-$ if and only if $B_z$ has larger probability of first hitting  $\partial_-$ than $\partial_+$. And $z \in \G$ if and only if these probabilities are equal.
It is then not hard to see that for all $z \in \tilde \G\setminus K$, we have $z\in H_-(\eta)$, where $\tilde \G$ is
the geodesic in $\m H \setminus \tilde \eta$.
In fact, the Brownian motion starting from $z$  has equal probability to hit first $\partial_-(\tilde \eta)$ or to hit $\partial_+(\tilde \eta)$. 
Besides, every sample path hitting $\partial_-(\tilde \eta)$ hits already $\partial_-(\eta)$, but not $\partial_+(\eta)$. Hence, if we stop the Brownian motion when it hits $\eta \cup \m R$, it has probability bigger than $1/2$ to hit $\partial_-(\eta)$.

By comparing asymptotes for $\G$ and $\tilde \G$, we have $\tilde{g} (z_0) \leq g(z_0)$. 
  \end{proof}

  \begin{lemma}\label{lem_cap_comparison}
     If $\g$ is an $R$-regular curve tangentially attached to $\m R_+$, then the arclength parametrization $s$ of $\g$ and the capacity parametrization $t(s)$ of $\eta = \sqrt \g$ satisfy $ s/5 \leq  t \leq s/2$, $\forall s \in [0,R]$. 
  \end{lemma}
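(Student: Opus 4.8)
The plan is to pass to half-plane capacities and to sandwich the slit $\eta=\sqrt\g$ between an enclosing half-disc and an inscribed vertical segment. Since $\eta$ is capacity parametrized, $\mathrm{cap}(\eta[0,t(s)])=2t(s)$, so the asserted bounds $s/5\le t\le s/2$ are equivalent to
$$\frac{2s}{5}\le \mathrm{cap}\big(\sqrt{\g[0,s]}\big)\le s,\qquad s\in[0,R].$$
First I record the two geometric inputs. As arclength dominates chordlength, $|\eta(r)|^2=|\g(r)|\le r\le s$ for $r\le s$, so $\eta[0,s]$ lies in the half-disc $\{|z|\le\sqrt s\}\cap\m H$. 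For the tip, writing $\eta(s)^2=\g(s)$ with $\eta(s)=a+ib$ gives $(\Im\eta(s))^2=b^2=\tfrac12(|\g(s)|-\Re\g(s))$; by \eqref{ineq_2} one has $|\g(s)|\ge 4s/5$, while \eqref{ineq_1} together with $\g(0)=0$, $\g'(0)=-1$ and $\o(s)\le\o(R)\le 1/5$ gives $-\Re\g(s)\ge s-s\,\o(s)\ge 4s/5$. Hence $(\Im\eta(s))^2\ge 4s/5$.

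For the upper bound I use monotonicity of the half-plane capacity together with the explicit value $\mathrm{cap}(\{|z|\le\rho\}\cap\m H)=\rho^2$ (the mapping-out function being $z\mapsto z+\rho^2/z$). Since $\eta[0,t(s)]=\sqrt{\g[0,s]}\subseteq\{|z|\le\sqrt s\}\cap\overline{\m H}$, monotonicity yields $2t(s)=\mathrm{cap}(\eta[0,t(s)])\le s$, that is $t(s)\le s/2$.

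The lower bound is the only delicate point, because the capacity cannot be bounded below by inclusion: a slit has empty interior, so no vertical segment actually sits inside $\eta$. Instead I would invoke the estimate that a compact connected $\m H$-hull $K$ meeting $\m R$ satisfies
$$\mathrm{cap}(K)\ge \tfrac12\Big(\sup_{z\in K}\Im z\Big)^2,$$
the vertical segment $[0,iH]$ being the extremal (minimal-capacity) hull of maximal height $H$. Its capacity is exactly $H^2/2$, in agreement with Lemma~\ref{lem_straight} at $\t=0$, where $\eta(t)=2\sqrt t$, so a vertical segment of height $H$ has capacity parameter $H^2/4$ and $\mathrm{cap}=H^2/2$. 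Applying this with $H=\Im\eta(s)$ and the bound $(\Im\eta(s))^2\ge 4s/5$ from the first step gives $2t(s)=\mathrm{cap}(\eta[0,t(s)])\ge\tfrac12\cdot\tfrac{4s}{5}=\tfrac{2s}{5}$, i.e. $t(s)\ge s/5$.

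The crux is therefore the height--capacity inequality $\mathrm{cap}(K)\ge\tfrac12(\sup_K\Im)^2$, and this is the single step I expect to require care. I would derive it from the probabilistic representation $\mathrm{cap}(K)=\lim_{y\to\infty}y\,\esp^{iy}[\Im B_{\tau}]$, where $B$ is planar Brownian motion and $\tau$ its exit time from $\m H\setminus K$: a path from $iy$ can exit only through $\m R$ or through $K$, and comparing with the vertical segment of the same height (whose exit law is explicit through $z\mapsto\sqrt{z^2+H^2}$) shows that replacing $K$ by that segment does not increase $\esp^{iy}[\Im B_\tau]$; alternatively one may simply quote this standard two-sided capacity bound. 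Everything else is monotonicity of capacity and the two explicit computations (half-disc and vertical segment), and one checks that the constants $1/5$ and $1/2$ are precisely those produced by the inputs $|\g(s)|\le s$, $(\Im\eta(s))^2\ge 4s/5$, and $\mathrm{cap}=H^2/2$ for the vertical segment.
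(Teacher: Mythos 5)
Your proposal follows the same skeleton as the paper's proof: the upper bound $t\le s/2$ is obtained identically (arclength gives $|\g(r)|\le s$, so $\eta[0,t(s)]$ sits in the half-disc of radius $\sqrt s$, whose half-plane capacity is $s$), and your tip-height estimate $(\Im\eta(t(s)))^2=\tfrac12\left(|\g(s)|-\Re\g(s)\right)\ge 4s/5$ is exactly the paper's bound $Y_0\ge\sqrt{4/5}\sqrt s$, derived there from $|\g(s)+s|\le s\,\o(s)\le s/5$ via \eqref{ineq_1}; the constants $1/5$ and $1/2$ come out the same way. The single point of divergence is the step you yourself flag as the crux, namely $\mathrm{cap}(K)\ge\tfrac12(\sup_K\Im)^2$. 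The paper does not quote an extremal result here; it proves precisely what is needed in two lines from the Loewner equation applied to the tip: setting $(X_r,Y_r)=g_r(\eta(t))$ for $r\in[0,t)$ with $t=t(s)$, the LDE gives $\partial_r Y_r=-2Y_r/\left((X_r-W_r)^2+Y_r^2\right)\ge -2/Y_r$, hence $Y_0^2-Y_r^2\le 4r$, and since the tip is swallowed exactly at time $t$ (so $Y_r\to 0$ as $r\to t$) one gets $t\ge Y_0^2/4$, i.e.\ $\mathrm{cap}\ge Y_0^2/2$. Your alternative justification is the weak spot of the proposal: the Brownian comparison you sketch (``replacing $K$ by the vertical segment of the same height does not increase $\esp^{iy}[\Im B_\tau]$'') is exactly the extremal statement itself, asserted rather than proved --- the two hulls are not nested, so no monotonicity of exit expectations is available off the shelf. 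The inequality is true in the connected setting you state (connectivity is essential: a small disc at height $H$ has tiny capacity), but for the hull you actually need --- a curve hull with its tip as the highest relevant point --- the ODE argument above is the standard, self-contained way to close it, and it is in fact valid for any point of the hull since every $z\in\eta[0,t]$ has swallowing time $\tau(z)\le t$ and $\Im g_r(z)\to 0$ as $r\to\tau(z)$. So: same route and identical constants, with one step black-boxed (or supported by a circular sketch) where the paper supplies the short direct proof; as written, that step needs the LDE computation or a genuine citation to be complete.
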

  \begin{proof} For every $s \in [0,S]$,
     $$2t = cap(\sqrt{\g[0,s]}) \leq cap( \{z\in \m{H}, |z| \leq \sqrt s \}) = s.$$
     To see the other inequality, set
     $(X_r, Y_r) = (\Re g_r(\eta(t)), \Im g_r(\eta(t)))$ for $r \in [0,t)$. By the Loewner differential equation,
     \[\partial_r Y_r = \frac{ - 2Y_r}{(X_r-W_r)^2 + Y_r^2} \geq \frac{-2}{Y_r}.\]
     Hence 
     \[\partial_r (Y_0^2 -Y_r^2) = -2 Y_r \partial_r Y_r \leq 4\]
     so that \[4r \geq Y_0^2 - Y_r^2.\]
     We also know that $Y_{t(s)} = 0$, hence 
     $$t(s) \geq Y_0^2 /4. $$
    Since $\o(R) \leq 1/5$, we have from \eqref{ineq_1}
    $$\abs{\g(s) + s} \leq s \o(s)  \leq s/5.$$
    We conclude that
    \[Y_0 = \Im \sqrt{\g(s)} \geq \sqrt{4/5}\sqrt{s},\]
    and $ t \geq s/5$ follows.  
       \end{proof}

 \begin{lemma}\label{lem_vertical_curve}
 Using the same notation and assumption as in Lemma \ref{lem_cap_comparison},
    there exists a universal constant $c>0$, such that if $\eta$ satisfies in addition $|\pi/2 - \arg(\eta'(t))|\leq \t$ for some $0\leq \t <\pi/4$ and all $t \in [0,T]$, then the driving function $W$ is bounded by 
    \[ |W_{t(s)}| \leq c \t \sqrt{s}.
    \]
   It implies that for all $t \leq R/5$,
\begin{equation}\label{ineq_key}
\abs{\l_t} \leq c \o (5t) t^{1/2},
\end{equation}
where we recall $\o$ is the modulus of continuity  of $\g'$.
 \end{lemma}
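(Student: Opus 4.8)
The plan is to use that the hypothesis $\abs{\pi/2-\arg(\eta'(t))}\le\t$ confines $\eta$ to a thin cone about the vertical, and then to read the driving function off the geometry by comparison with straight segments, for which Corollary~\ref{cor_Re} is exactly the needed input.

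First I would record the geometric consequences of the angle bound. Since $\t<\pi/4$, $\Im\eta$ is strictly increasing, so $\eta$ is a graph $x=u(y)$; integrating $\eta'$ gives $\abs{\Re\eta(t')}\le(\tan\t)\,\Im\eta(t')$ for all $t'\le t(s)$, so $\eta[0,t(s)]$ lies in the cone $\{\abs{\arg z-\pi/2}\le\t\}$. Since $\eta=\sqrt\g$ and $\abs{\g(s)}\le s$, the tip is $z_0:=\eta(t(s))=\sqrt{\g(s)}$ with $\abs{z_0}\le\sqrt s$, hence $\Im z_0\le\sqrt s$ and $\abs{\Re z_0}\le(\tan\t)\sqrt s$. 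Using the convexity bound $\tan\t\le(4/\pi)\t$ on $[0,\pi/4]$, the claim $\abs{W_{t(s)}}\le c\t\sqrt s$ reduces to the thin-cone estimate $\abs{g_{t(s)}(z_0)}\le C(\tan\t)\sqrt s$, and by scaling one may normalize $\abs{z_0}\le1$ so that $C$ is universal.

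For the thin-cone estimate I would bound $W_{t(s)}=g_{t(s)}(z_0)$ above and below, the two cases being exchanged by the reflection $z\mapsto-\bar z$. For the upper bound I would build a comparison arc $\sigma^{+}$ from $0$ to $z_0$ lying weakly to the left of $\eta$ at every height and staying in the cone: the straight left edge of the cone (argument $\pi/2+\t$) up to height $\Im z_0$, followed by a short segment across to $z_0$. Being ordered to the left of $\eta$, the arc $\sigma^{+}$ together with $\eta$ bounds a hull with $\sigma^{+}$ as left and $\eta$ as right boundary, so Lemma~\ref{lem_driver_comparison} gives $W_{t(s)}=g_\eta(z_0)\le g_{\sigma^{+}}(z_0)$; the lower bound is symmetric. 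I emphasize that comparing directly with the chord $[0,z_0]$ is \emph{not} enough: such a chord generally interlaces with $\eta$, and in any case an $\eta$ that excurses to the cone edge but returns to a tip with $\Re z_0=0$ still has $\abs{W_{t(s)}}\sim(\tan\t)\sqrt s$, so it is the cone width, not $\abs{\Re z_0}$, that governs $W$.

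The main obstacle is the estimate $\abs{g_{\sigma^{+}}(z_0)}\le C(\tan\t)\sqrt s$, because $\sigma^{+}$ is a straight cone edge followed by a connector rather than a single segment, so Corollary~\ref{cor_Re} does not apply verbatim. I would treat the straight edge with Corollary~\ref{cor_Re} (its tip has horizontal coordinate $(\tan\t)\,\Im z_0\le\Im z_0$) and then control the effect of the connector by a Koebe/distortion estimate, using that it has length $\le2(\tan\t)\sqrt s$ and sits in the slab $\{\abs{\Re z}\le(\tan\t)\sqrt s\}$; equivalently one can argue directly via the geodesic-asymptote characterization of $g_{\sigma^{+}}(z_0)$ used in the proof of Lemma~\ref{lem_driver_comparison}, bounding the real part of the vertical asymptote by the harmonic measure of the two sides of the thin hull. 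The straight-line input (Lemma~\ref{lem_straight}, Corollary~\ref{cor_Re}) and the monotonicity input (Lemma~\ref{lem_driver_comparison}) are the easy ingredients; showing that the connector perturbs $g(z_0)$ by at most the same order is the delicate point.

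Finally, for \eqref{ineq_key} I would apply the first part to each restricted curve $\eta[0,t]$ with the local angle $\t(t)=\sup_{t'\le t}\abs{\pi/2-\arg(\eta'(t'))}$, noting that $\l_t=W_t$ does not depend on the future. From $\eta'=\g'/(2\eta)$ we have $\arg\eta'=\arg\g'-\arg\eta$; writing $\g(s)=-s+\int_0^s(\g'(r)+1)\,dr$ with $\abs{\g'(r)+1}\le\o(r)$ gives $\arg\g'(s)=\pi+O(\o(s))$ and $\arg\eta=\tfrac12\arg\g(s)=\pi/2+O(\o(s))$, whence $\t(t)\le C\,\o(s(t))$. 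By Lemma~\ref{lem_cap_comparison}, $s(t)\le5t$ for $t\le R/5$, so $\t(t)\le C\,\o(5t)$ and $\sqrt{s(t)}\le\sqrt{5t}$, and the first part yields $\abs{\l_t}\le c\,\t(t)\sqrt{s(t)}\le c'\,\o(5t)\,t^{1/2}$, which is \eqref{ineq_key} after relabeling the constant.
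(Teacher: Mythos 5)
Your derivation of \eqref{ineq_key} from the first assertion (via $\arg\eta'=\arg\g'-\tfrac12\arg\g$, the bound $\t(t)\leq C\o(s)$ from \eqref{ineq_1}, and $s\leq 5t$ from Lemma~\ref{lem_cap_comparison}) is correct and is exactly the paper's argument, and invoking Lemma~\ref{lem_driver_comparison} to compare $\eta$ with a one-sided arc is the right mechanism. But the gap is in your choice of comparison arc, and it is not merely the "delicate point" you flag: it is fatal at the claimed order. Your arc $\sigma^{+}$ (the cone edge from $0$ up to height $y=\Im z_0$, then a horizontal connector of length at most $2y\tan\t$ to $z_0$) has a corner of angle roughly $\pi/2$ at distance $O(y\tan\t)$ from its tip. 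Map out the straight edge first: near its tip the mapping-out function has a square-root expansion (of the form $\zeta\mapsto\sqrt{2ih\zeta}$ with $h\asymp y$), so the image of the connector is, to leading order, a \emph{straight slit of length} $\asymp y\sqrt{\tan\t}$ emanating from the real line at angle approximately $\pi/4$. Mapping out that slit displaces the driving value by $k(\pi/4)\sqrt{\mathrm{hcap}}\asymp y\sqrt{\tan\t}$, with no cancellation against the $O(y\t)$ contribution of the edge itself. Hence $g_{\sigma^{+}}(z_0)\asymp y\sqrt{\tan\t}$, and your comparison can only yield $\abs{W_{t(s)}}\leq C\sqrt{\t}\,\sqrt{s}$, strictly weaker than the claimed $c\,\t\sqrt{s}$ (and useless for the $C^{1,\b-1/2}$ conclusion downstream, where the linear dependence on $\o$ is essential). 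No Koebe/distortion or harmonic-measure refinement rescues this, because the estimate above reflects the true size of $g_{\sigma^{+}}(z_0)$: the arc itself is lossy.

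The paper avoids this with a corner-free comparison that you dismissed too quickly when ruling out straight chords: not the chord $[0,z_0]$, but the straight segment through the tip with the \emph{maximal} tilt, namely $\tilde\eta=[x_0,\eta(t)]$ with $x_0=x+y\tan\t$ (and its mirror image with foot $x-y\tan\t$ for the other bound). Integrating the derivative hypothesis gives $\abs{\Re\eta(t)-\Re\eta(t')}\leq\tan(\t)\,\bigl(\Im\eta(t)-\Im\eta(t')\bigr)$, so at every height $\eta$ lies weakly to the left of $\tilde\eta$; thus $[0,x_0]\cup\eta[0,t]\cup\tilde\eta$ bounds a compact hull, Lemma~\ref{lem_driver_comparison} gives $W_t=g_t(\eta(t))\geq\tilde g(\eta(t))$, and since $\tilde\eta$ is a single straight segment, Corollary~\ref{cor_Re} applies directly after translating by $x_0\in[0,2y\tan\t]$: one gets $\tilde g(\eta(t))=x_0+g(\eta(t)-x_0)\geq -Cy\tan\t$. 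The reflected segment yields the matching upper bound, and $y\leq\sqrt{s}$ together with $\tan\t\leq(4/\pi)\t$ finishes the proof. In short: anchoring the comparison segment at the tip rather than at the origin removes the connector, and with it the square-root loss that defeats your construction.
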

\begin{proof}
Let $(x,y)$ denote $(\Re \eta(t), \Im \eta(t))$.
Consider the straight line segment $\tilde \eta$ that passes through $\eta(t)$ and makes an angle of $\t$ with the vertical line, as shown in Figure~\ref{comp}. 
Let  $x_0 = x + y \tan(\t)$ be the intersection of $\tilde \eta$ and $\m R$. Denote $\tilde g$ the mapping-out function of the segment $[x_0, \eta(t)]$, $g_t$  of $\eta[0,t]$ and $g$ of $[0, \eta(t) -x_0]$. 
By assumption on $\arg( \eta')$, the segment $[0,x_0]$, the curve $\eta[0,t]$ and the segment $[x_0, \eta(t)]$ form the boundary of a compact $\m{H}$-hull.
In fact, for all  $y \in (0, \Im \eta(t))$, there exists a unique point $\eta (t')$ on $\eta$  and a unique point $\tilde z$ on the segment $\tilde \eta$ with imaginary part $y$. 
It is easy to see that $\Re \eta (t') \le \Re \tilde z$.

It then follows from Corollary~\ref{cor_Re} and  Lemma~\ref{lem_driver_comparison}, $$W_t = g_t(\eta(t)) \geq \tilde g(\eta(t)) \geq g (\eta(t) - x_0) \geq C (x - x_0) = - C y \tan(\t). $$
The upper bound is similar, and we have
$$\abs{W_t} \leq C y \tan(\t) \leq  (4C / \pi )\t \sqrt{s}$$
with $C = 16/(\sqrt{3} \pi)$, where in the last inequality we have used $t= t(s)$, $y \leq \sqrt s$ and $\tan (\t) \leq 4\t /\pi$. 

In terms of $\o$, we first compute the difference between $\arg (\eta')$ and $\pi /2$:
\begin{align*}
\arg(\eta'(t)) &= \Im \log (\g'(s) / 2\sqrt{\g(s)}) = \Im \log (\g'(s)) - \Im(\log \g(s))/2\\
&= \arg(\g'(s)) - \arg(\g(s))/2.
\end{align*}
Hence from \eqref{ineq_1},
$$|\arg(\eta'(t)) - \pi/2| = |\arg(\g'(s)) -\pi - (\arg(\g(s)) - \pi)/2| \leq 2 \o(s). $$

Since  $2\o(R) \leq 2/5 < \pi/4$, we can apply the above estimate of $W$ to the interval $[0,t]$ with  $s \leq R$, $\t=2 \o(s)$, and obtain that
the driving function $\l$ of $\eta$ satisfies
\begin{equation*}
\abs{\l_t} \leq 2c  \o(s)  s^{ 1/2} \leq c' \o (5t) t^{1/2}.
\end{equation*} 
It suffices to replace $c$ by the maximum of $c$ and $c'$.
\end{proof}

\subsection{Proof of Theorem~\ref{thm_main}}\label{sec_proof_1}

Now we proceed to the proof of Theorem~\ref{thm_main}. We assume that $\g$ is a $C^{1,\b}$ curve tangentially attached to the positive real line. Without loss of generality, $\g$ is also assumed to be $R$-regular.

For $0<\b \leq 1/2$:
 We would like to compare $|\l_{t + r} - \l_t|$ to $ r^{\b+1/2}$ for every  $t \in [0,T]$ and every $r$ in a small but uniform neighborhood $[0, R_0]$ (as far as it is defined). The constant $R_0$ is chosen as in  Corollary~\ref{cor_1_1.5}.

The case $t = 0$ is already given by the inequality \eqref{ineq_key}. 
Fix $s \in (0,S]$, $t := t(s)$.   The centered mapping out function $f_s$, defined as 
  \[f_s (z) = \varphi_s^{-1} \left (\sqrt{z^2 -\g(s)} \right ), \quad f_s : \m{H} \setminus \eta[0,t] \to \m{H},\]
  maps the curve $\eta[t,T]$ to a curve $\tilde{\eta}$ whose driving function is $\tilde \l_r = \l_{t+r} - \l_t$, see Figure~\ref{figure1}.
    Since $f_s (z) = \sqrt{h_s(z^2)}$, by Corollary~\ref{cor_1_1.5}, $\tilde \g = \tilde{\eta}^2$, reparametrized by arclength, is a $C^{1,\b}$ curve: thus for $ r \leq R_0$,
   \begin{align*}
   \abs{\tilde \gamma'(r) + 1 } &\leq C_2 r^\b,  & \text{if }  0< \b < 1/2; \\
   \abs{\tilde \gamma'(r) + 1 } &\leq C_2 r^\b \log (1/r),   & \text{if } \b =1/2. 
   \end{align*}
   Here $R_0$ and $C_2$ depend on $\b, M,S, \norm{\g}_{1,\b}$, but are uniform in $s \in [0,S]$. By taking a perhaps smaller $R_0$, such that the modulus of continuity of $\tilde \g'$ at $R_0$ is less than $1/5$,
   inequality \eqref{ineq_key} in Lemma~\ref{lem_vertical_curve} applies again to  $\tilde \l$. For $r \leq R_0/5$,
 \begin{align*}
  \abs{\l_{t+r} - \l_t} &\leq c C_2 (5r)^{\b} r^{1/2} := C r^{\b +1/2} \hspace{1.5cm} &\text{if } 0 < \b < 1/2;\\
  \abs{\l_{t+r} - \l_t} &\leq c C_2 (5r)^{1/2} \log(1/5r) r^{1/2}\leq C r \log(1/r)  &\text{if } \b = 1/2,
  \end{align*}
  where $C$ depends only on the global parameters of $\g$ and on $\norm{\g}_{1,\b}$.

For $\b > 1/2$:
Since we expect that the curve has $C^1$ driving function, it is natural to compute directly the derivative of $\l$. Actually it is a multiple of $L_s$ (defined in Proposition~\ref{prop_varphi}) which equals to the second derivative at $0$ of the uniformizing map $\mu_s$ (Corollary~\ref{cor_Ls}). 
A similar result has been observed in \cite[Lem.~6.1]{lindtran2014regularity} in a more general setting, with higher order of derivatives of $\l$. 
Here we reproduce a simple proof for the first derivative for the readers' convenience. We first prove a lemma, to see how the driving function changes under a conformal transformation. The proof is standard, the same computation appears also in the study of the conformal restriction property \cite[Sec.~5]{lawler2003conformal}.

Let $\nu$ be a conformal map on a neighborhood $D$ of $0$, such that $\nu(0) = 0$, $\nu(D\cap \m H) \subset \m H$ and $\nu(D\cap \m R) \subset \m R$. 
Let $\eta$ be a curve in $\m H$ driven by $W$ such that $\eta$ is contained in $D$. Define $\tilde \eta (t) := \nu(\eta (t))$.
Let $g_t$ and $\tilde g_t$ denote the mapping-out function of $\eta[0,t]$ and $\tilde \eta[0,t]$ respectively, and $\phi_t = \tilde g_t \circ \nu \circ g_t^{-1}$ denote the conformal map that factorizes the diagram (Figure~\ref{figure2}).
Note that $\phi_0 = \nu$, 
and define $\tilde W_t = \phi_t (W_t)$.

\begin{lemma}\label{lem_mobius}
Assume that $\abs{W_t / t}$ is bounded. Then  we have
$$ \abs{\tilde W_t  - \nu'(0) W_t + 3 \nu''(0) t} / t \xrightarrow[]{t \to 0} 0.$$
\end{lemma}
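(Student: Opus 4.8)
The plan is to derive the standard transformation rule for the driving function under a conformal map and then extract the leading behaviour as $t\to 0$, using only the hypothesis $W_t=O(t)$ rather than any differentiability of $W$. Throughout, let $2b(t)$ denote the half-plane capacity of $\tilde\eta[0,t]$, so that $\tilde g_t$ is driven by $\tilde W$ with $\partial_t\tilde g_t(w)=2\dot b(t)/(\tilde g_t(w)-\tilde W_t)$. First I would record the evolution equation for the factorising maps. Differentiating the identity $\tilde g_t\circ\nu=\phi_t\circ g_t$ in $t$ at a fixed $z$, writing $w=g_t(z)$ and using the two Loewner equations, one gets
\begin{equation*}
\partial_t\phi_t(w)=\frac{2\dot b(t)}{\phi_t(w)-\tilde W_t}-\frac{2\phi_t'(w)}{w-W_t}.
\end{equation*}
Since $\phi_t$ is a smooth family of conformal maps on a fixed half-neighbourhood of $W_t$, the right-hand side must stay bounded as $w\to W_t$. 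Expanding $\phi_t$ to second order at $W_t$ and cancelling the simple pole forces $\dot b(t)=\phi_t'(W_t)^2$, while the finite part gives $\lim_{w\to W_t}\partial_t\phi_t(w)=-3\phi_t''(W_t)$. This is exactly the computation of \cite[Sec.~5]{lawler2003conformal}, which I would reproduce in a few lines.

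Next I would reduce the claim to a statement about $\phi_t(0)$. Writing $\tilde W_t=\phi_t(W_t)$ and Taylor expanding at $0$ (with $\xi_t$ between $0$ and $W_t$),
\begin{equation*}
\tilde W_t=\phi_t(0)+\phi_t'(0)\,W_t+\tfrac12\phi_t''(\xi_t)\,W_t^2 .
\end{equation*}
By the continuity of the family $(\phi_t)$ down to $\phi_0=\nu$, the derivatives $\phi_t',\phi_t''$ are uniformly bounded near $0$ for small $t$ and $\phi_t'(0)\to\nu'(0)$. Using $W_t=O(t)$, the quadratic term is $O(t^2)=o(t)$ and $(\phi_t'(0)-\nu'(0))W_t=o(1)\cdot O(t)=o(t)$, so
\begin{equation*}
\tilde W_t=\phi_t(0)+\nu'(0)\,W_t+o(t),
\end{equation*}
and it remains to prove $\phi_t(0)=-3\nu''(0)\,t+o(t)$.

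Then I would compute $\partial_t\phi_t(0)$ from the flow equation evaluated at the fixed point $w=0$. Writing $a_t=\phi_t'(0)$ and $c_t=\phi_t''(0)$ and using $\phi_t(0)-\tilde W_t=\phi_t(0)-\phi_t(W_t)=-a_tW_t+O(W_t^2)$ together with $\phi_t'(W_t)^2=a_t^2+2a_tc_tW_t+O(W_t^2)$, the two terms each contribute a pole $\mp 2a_t/W_t$ which cancels, leaving
\begin{equation*}
\partial_t\phi_t(0)=-3c_t+O(W_t)=-3\phi_t''(0)+O(W_t)\xrightarrow{t\to 0}-3\nu''(0),
\end{equation*}
since $\phi_t''(0)\to\nu''(0)$ and $W_t\to 0$. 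As $\phi_0(0)=\nu(0)=0$, integrating gives $\phi_t(0)=\int_0^t\partial_r\phi_r(0)\,dr=-3\nu''(0)\,t+o(t)$, which combined with the previous display yields the lemma.

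The main obstacle is the soft but essential analytic input underpinning the two reductions: justifying that $(t,w)\mapsto\phi_t(w)$ is continuous (indeed $C^1$ in $t$) up to $t=0$ with $\phi_0=\nu$, that $0$ remains in the domain of $\phi_t$ at distance $\gg|W_t|$ from the moving singularity $W_t$, and that $\phi_t'(0),\phi_t''(0)$ are uniformly controlled for small $t$. This is precisely where the hypothesis $|W_t/t|$ bounded is used twice — to keep $0$ well inside the domain and to make the quadratic remainders $o(t)$ rather than merely $o(1)$; without it one only obtains $W_t\to 0$, which is too weak to conclude.
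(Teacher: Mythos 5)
Your proof is correct, and it runs on the same engine as the paper's: the flow equation $\partial_t\phi_t(w)=2\dot b(t)/(\phi_t(w)-\tilde W_t)-2\phi_t'(w)/(w-W_t)$, the pole cancellation giving $\dot b(t)=\phi_t'(W_t)^2$, and the finite part $-3\phi_t''$ --- the restriction computation of \cite[Sec.~5]{lawler2003conformal}, which the paper also invokes. The difference is organizational. The paper expands everything around the \emph{moving} point $W_r$ and integrates $\partial_r\phi_r(z)$ for $z$ near $W_t$, controlling errors through the uniform Taylor remainders $\abs{R(z)}\leq C\abs{z}^3$, $\abs{R'(z)}\leq C\abs{z}^2$ and the term $T_r(z)$, arriving at $\tilde W_t-\nu'(0)W_t+3\nu''(0)t=\int_0^{W_t}(\nu'(s)-\nu'(0))\,ds+\int_0^t 3(\nu''(0)-\phi_r''(W_r))+T_r(W_t-W_r)\,dr$ in one stroke. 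You instead evaluate the flow at the \emph{fixed} point $0$, integrate to get $\phi_t(0)=-3\nu''(0)t+o(t)$, and transfer to $\tilde W_t=\phi_t(W_t)$ by a Taylor expansion at $0$. Your route makes the two uses of the hypothesis $W_t=O(t)$ more visible (the quadratic remainder is $O(t^2)$ and $(\phi_t'(0)-\nu'(0))W_t=o(t)$), at the cost of having to justify evaluating the flow equation at a point whose distance to the singularity is exactly $\abs{W_t}$; both routes need the same analytic input, namely that $\phi_t$ extends conformally to a fixed neighborhood of $0$ containing the swallowed interval (hence $W_t$), with $t\mapsto\phi_t^{(n)}(z)$ at least $C^1$ and derivatives uniformly bounded for small $t$ --- exactly what the paper records at the start of its proof.

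Two points to tighten. First, to extract the finite part $-3c_t$ at $w=0$ you need the second-order coefficient of the denominator: $\phi_t(0)-\tilde W_t=-a_tW_t-\tfrac12 c_tW_t^2+O(W_t^3)$. With only $-a_tW_t+O(W_t^2)$, dividing by $W_t$ leaves an undetermined $O(1)$ contribution; the correct bookkeeping gives $2\dot b(t)/(\phi_t(0)-\tilde W_t)=-2a_t/W_t-4c_t+c_t+O(W_t)$, where the $+c_t$ comes precisely from the $\tfrac12 c_tW_t^2$ term, and only then does adding $2a_t/W_t$ yield $-3c_t+O(W_t)$. Second, your closing remark that $0$ stays ``at distance $\gg\abs{W_t}$ from the moving singularity'' is not right --- the distance is exactly $\abs{W_t}$ --- but this is harmless: $W_t$ is a removable singularity of $\phi_t$ itself (it extends analytically across the interval swallowed by the hull, which contains both $0$ and $W_t$ since $W_0=0$), the $1/W_t$ poles of the two terms cancel exactly as you computed, and the remainders are controlled by the uniformly bounded third derivative of $\phi_t$ near $0$. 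Relatedly, boundedness of $\abs{W_t/t}$ is not what keeps $0$ in the domain ($W_t\to0$ suffices for that); it is needed where you used it in the remainder estimates.
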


 \begin{figure}
 \centering
 \includegraphics[width=0.9\textwidth]{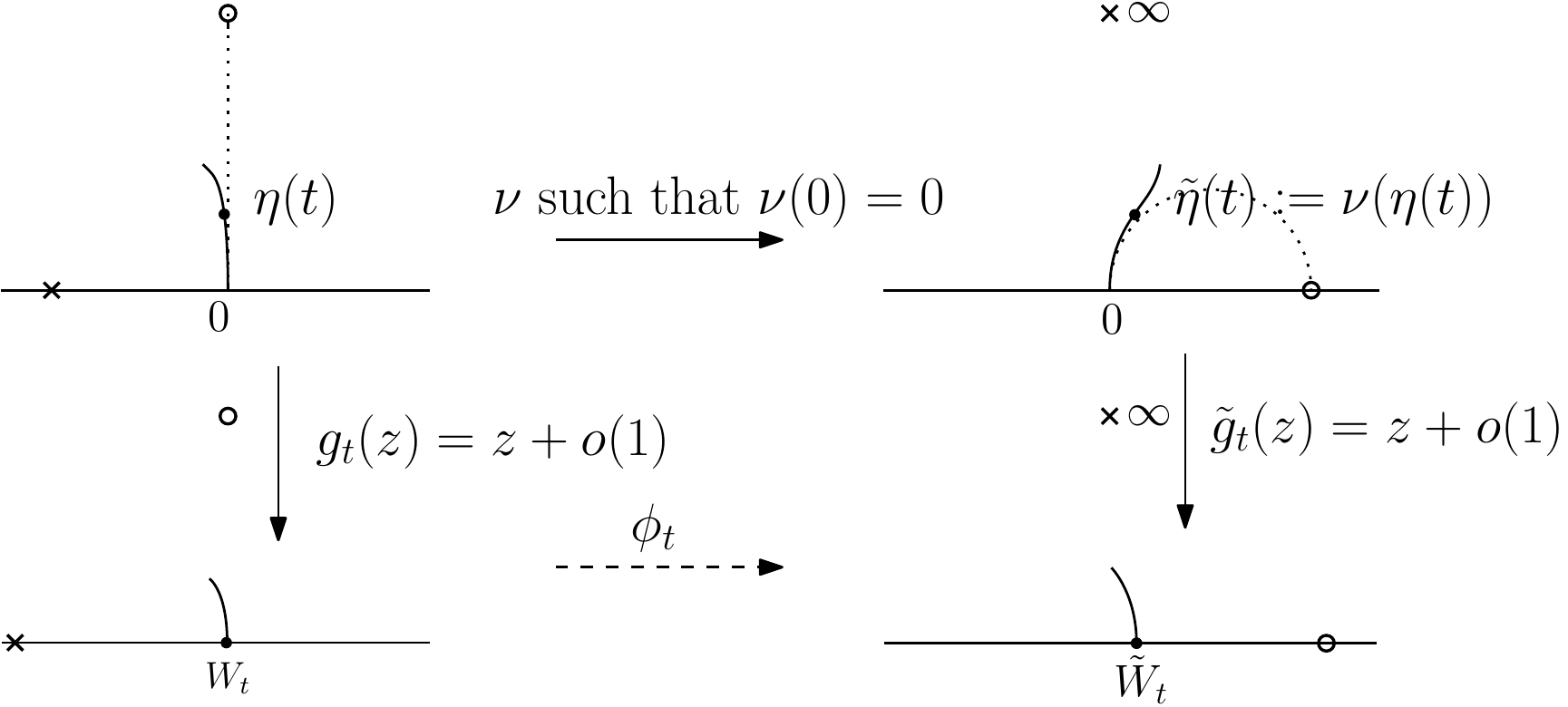}
 \caption{\label{figure2} The conformal map $\phi_t$ factorizes the diagram.} 
 \end{figure}

\begin{proof}  
Notice that $\tilde \eta (t)$ is not capacity-parametrized. 
Let $2 a(t)$ denote the capacity of $\tilde \eta [0,t]$. We have then $a'(t) = [\phi_t'(W_t)]^2$.

It is not hard to see that for any continuous driving function $W$, the map $t \mapsto \phi_t^{(n)}(z)$ is at least $C^1$ for all  $n \geq 0$ and all $z \in \overline{\m H}$ for which  $\phi_t(z)$ is well-defined
(when $z \in \m R$, this follows from the Schwarz reflection principle). 
We deduce that $r \mapsto \phi'_r(W_r)$ and $r \mapsto \phi''_r(W_r)$ are both continuous as well as any higher order derivatives of $\phi_r$ evaluated at $W_r$ (and differentiable if $W$ is so).

From that, it is not hard to see that there exists $t_0, \d>0$, and $C>0$, such that for all $t \leq t_0$ and $\abs{z} \leq \d$, we have $\abs{R(z)} \leq C\abs{z}^3$ and $\abs{R'(z)} \leq C\abs{z}^2$, where $R$ is defined as 
$$R(z) = \phi_t(W_t + z) - \tilde W_t - z \phi_t'(W_t)  - z^2 \phi_t''(W_t) / 2,$$
and
$$R'(z) = \phi'_t(W_t + z) - \phi'_t(W_t) - z \phi''_t(W_t).$$
For $z \in \m H$,
\begin{align*}
    \partial_r \phi_r(z) &= \partial_r \tilde g_r \circ \nu \circ g_r^{-1} (z) \\
    &= a'(r) \partial_a \tilde g_r (\nu \circ g_r^{-1} (z)) + \tilde g_r'(\nu \circ g_r^{-1} (z))  \nu'(g_r^{-1} (z)) \partial_r g_r^{-1} (z)  \\
    & =  \frac{2a'(r)}{\phi_r(z) - \tilde W_r} - \frac{2\phi_r'(z)}{z- W_r},
\end{align*}
where we have used 
$$\partial_r g_r^{-1} (z) = \frac{-2 (g_r^{-1})' (z)}{z - W_t}.$$
For simplicity of notation, we will omit the argument $W_t$ in the following computation. 
\begin{align*}
\partial_r \phi_r(z + W_r) &= \frac{2 (\phi_r')^2}{z\phi_r' +z^2 \phi''_r/2+R(z) } - \frac{2(\phi'_r + z \phi''_r +R'(z))}{z} \\
& = \frac{2\phi_r'}{z} \cdot \frac{1 - (1+z \phi''_r/2\phi_r'+R(z)/z\phi_r') (1 +z \phi_r''/\phi_r' + R'(z) /\phi_r')}{1+z \phi''_r/2\phi_r'+R(z)/z\phi_r'} \\
& = -3 \phi''_r (W_r) + T_r(z),
\end{align*}
with $T_r(z)/z$ bounded on $(z,r) \in \mc O \times [0,t_0]$, where $\mc O$ is a small neighborhood of $0$. Thus $T_r (z) \to 0 $ as $z \to 0$ uniformly in $r \in [0,t_0]$. 
\begin{align*} 
     &\tilde W_t  - \nu'(0) W_t + 3\nu''(0) t   \\
   =  &\lim_{z\to W_t} \phi_t (z)  - \nu'(0) W_t +3\nu''(0) t \\
   = & - \nu'(0) W_t+ \nu(W_t)  + \lim_{z\to W_t} \int_0^t \partial_r \phi_r(z) dr + 3\nu''(0) t  \\
   =&  \int_0^{W_t} (\nu'(s)- \nu'(0))ds + \int_0^t 3(\nu''(0)-\phi''_r(W_r)) + T_r(W_t -W_r) dr. 
\end{align*}
Since $W_t/t$ is bounded, the first integral divided by $t$ converges to $0$ as $t \to 0$. The second integral divided by $t$ converges to $0$ since the integrand converges uniformly to $0$ as $t \to 0$, which concludes the proof.
\end{proof}

In particular, if $W$ is differentiable at $0$, then the derivative with respect to the capacity of $\tilde {\eta}$ also exists at $0$, and 
\begin{equation}\label{eq_derivative}
\partial_a {\tilde W}|_{a=0} = \lim_{t \to 0} a'(0)^{-1} \partial_t \tilde W|_{t=0} = \dot W_0 /\nu'(0) - 3 \nu''(0)/\nu'(0)^2,
\end{equation}
as $a'(0) = \nu'(0)^2$.

\begin{corollary} \label{cor_Ls}
If $\b >1/2$, the driving function $W$ is right-differentiable. Moreover $\partial_{t+} W_t = 3L_s$, where $t(s) = t$ and $L_s$ is defined in Proposition~\ref{prop_varphi}.
\end{corollary}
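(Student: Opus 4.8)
The plan is to reduce the computation of the right derivative of $W$ at a fixed time $t=t(s)$ to the behaviour of a driving function \emph{at the origin}—which is already controlled by \eqref{ineq_key}—and to extract the constant $3L_s$ from the M\"obius normalization $\mu_s$ via Lemma~\ref{lem_mobius}. First I would fix $s\in(0,S]$, set $t=t(s)$, and map out the initial piece. By \eqref{eq_centered_mapping_out} the centered mapping-out function $f_t(z)=\varphi_s^{-1}(\sqrt{z^2-\g(s)})$ sends $\eta[t,T]$ to a curve $\hat\eta$ that is capacity-parametrized (capacity adds under composition, and subtracting $W_t$ is a translation) and whose driving function is $r\mapsto W_{t+r}-W_t$. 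Hence $\partial_{t+}W_t=\partial_{r+}(W_{t+r}-W_t)|_{r=0}$, and it suffices to produce this derivative and show it equals $3L_s$.

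Next I would relate $\hat\eta$ to the mapped-out curve $\tilde\g$ of Corollary~\ref{cor_1.5_2}. Since $h_s(z)=[\mu_s\circ\varphi_s^{-1}(\sqrt{z-\g(s)})]^2=\mu_s(f_t(\sqrt z))^2$, comparison of the explicit formulas gives $\sqrt{\tilde\g}=\mu_s\circ\hat\eta=:\tilde\eta$. By Corollary~\ref{cor_1.5_2}, $\tilde\g$ is a $C^{1,\b}$ curve tangentially attached to $\m R_+$ (weakly $C^{1,1}$ if $\b=1$), with norm and regularity radius bounded uniformly in $s$. Applying \eqref{ineq_key} to $\tilde\eta=\sqrt{\tilde\g}$, its capacity-parametrized driving function $V$ obeys $|V_a|\le c\,\o(5a;\tilde\g')\,a^{1/2}\le C\,a^{\b+1/2}$ for small $a$. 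As $\b>1/2$ this is $o(a)$, so $V$ is right-differentiable at $0$ with $\partial_{a+}V_0=0$, and $|V_a/a|$ is bounded near $0$.

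Finally I would apply the M\"obius change-of-variable formula in the correct direction. Since $\hat\eta=\mu_s^{-1}\circ\tilde\eta$, I invoke Lemma~\ref{lem_mobius} and \eqref{eq_derivative} with the \emph{source} curve $\tilde\eta$ (driving function $V$, now known to be differentiable at $0$ with $|V_a/a|$ bounded) and conformal map $\nu=\mu_s^{-1}$, whose target is $\hat\eta$. From $\mu_s(0)=0$, $\mu_s'(0)=1$, $\mu_s''(0)=L_s$ we get $(\mu_s^{-1})'(0)=1$ and $(\mu_s^{-1})''(0)=-L_s$, so \eqref{eq_derivative} yields that the capacity derivative of the driving function of $\hat\eta$ at $0$ exists and equals
$$\frac{\partial_{a+}V_0}{(\mu_s^{-1})'(0)}-\frac{3(\mu_s^{-1})''(0)}{\big((\mu_s^{-1})'(0)\big)^2}=0+3L_s.$$
Because $\hat\eta$ is capacity-parametrized with driving function $r\mapsto W_{t+r}-W_t$, this is exactly $\partial_{t+}W_t=3L_s$. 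The degenerate case $s=0$, where $\varphi_0$ is the identity and hence $L_0=0$, follows directly from \eqref{ineq_key} and recovers $\dot W_0=0$.

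The main obstacle is getting the \emph{direction} of Lemma~\ref{lem_mobius} right: equation \eqref{eq_derivative} presupposes differentiability of the source driving function, which is not yet available for $\hat\eta$; taking $\tilde\eta$—whose driving function is differentiable at $0$ by \eqref{ineq_key}—as the source and $\mu_s^{-1}$ as the map both circumvents this and simultaneously \emph{produces} the right-differentiability of $W$ at $t$. The remaining care is bookkeeping: verifying that $\hat\eta$ is genuinely capacity-parametrized, so that its own-capacity derivative is literally $\partial_{t+}W_t$, and that $\sqrt{\tilde\g}=\mu_s\circ\hat\eta$, so that the second-order coefficient $L_s=\mu_s''(0)$ is precisely what enters the constant.
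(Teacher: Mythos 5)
Your proposal is correct and follows essentially the same route as the paper: map out $\g[0,s]$ so the shifted driving function $r\mapsto W_{t+r}-W_t$ becomes the object of study, use Corollary~\ref{cor_1.5_2} together with inequality \eqref{ineq_key} to get $|V_a|\leq C a^{\b+1/2}$ (hence $\dot V(0)=0$ and $|V_a/a|$ bounded) for the driving function $V$ of $\sqrt{\tilde\g}$, and then apply Lemma~\ref{lem_mobius} and \eqref{eq_derivative} with $\nu=\mu_s^{-1}$, so that $\nu'(0)=1$ and $\nu''(0)=-L_s$ yield $\partial_{t+}W_t=3L_s$. Your explicit bookkeeping (capacity parametrization of the mapped-out curve, the identity $\sqrt{\tilde\g}=\mu_s\circ\hat\eta$, and the direction in which Lemma~\ref{lem_mobius} is applied) matches the paper's argument, merely spelled out in more detail.
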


\begin{proof}  (See Figure~\ref{figure1})  We use the notation as in Corollary~\ref{cor_1.5_2} and let $\nu = \mu_s^{-1}$. From Corollary~\ref{cor_1.5_2}, $\nu$ maps a Loewner chain driven by a certain function $V$ to $\tilde{\eta}$. This Loewner chain is the square root of a $C^{1,\b}$ curve.
By inequality \eqref{ineq_key} and the same proof as for the case $\b \leq 1/2$, we have 
$$\abs{V_t} \leq C t^{\b+1/2}$$
for small $t$,
in particular $\dot V(0) = 0$ as $\b >1/2$. Recall that 
the driving function of $\tilde {\eta}$ is $\tilde W_h = W_{t+h} - W_t$.
By Lemma~\ref{lem_mobius} and equation \eqref{eq_derivative},
  we have
\[ \partial_{t+} W_t = \dot V (0)  -3 \nu''(0) = 3 \mu_s''(0) = 3L_s,\]
where we have used $\nu'(0) = 1$.
\end{proof}

In particular $\dot{W}_0 = 0$. Notice that the above corollary only deals with the right derivatives of $W$. In the following lemma, we will see that $L$ is continuous. By elementary analysis, continuous right-derivative implies that $W$ is $C^1$, with the actual derivative $3L$.  See for example \cite[ Lem.~4.2]{lawler2008conformally} for a proof. 
Notice also that $3L_s$ depends only on $\g[0,s]$, it is then not surprising that it also gives the left derivative of $W$.

\begin{lemma} \label{lem_Ls_bound}
  There exists $C'$ and $C''$  such that for all $s \in [0,R]$, 
     $$ \abs{L_s} \leq C' \left( \frac{\o(s)}{\sqrt{s}} +  \int_0^{\sqrt{s}} \frac{\o(r^2)}{r^{2}}  dr \right) \leq C''
     \begin{cases}
     s^{\b - 1/2}, &\text{ if $\g$ is $C^{1,\b}$ }, \\
     s^{1/2} \log(1/s)  &\text{ if $\g$ is weakly $C^{1,1}$}.
     \end{cases}$$
\end{lemma}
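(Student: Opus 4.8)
The plan is to start from the explicit representation \eqref{eq_Ls} obtained in Proposition~\ref{prop_varphi}, namely
\[ L_s = \frac{1}{\pi}\int_{-\infty}^{\infty}\frac{w(r)}{r^2}\,dr, \qquad w(r) := v(r)-v(0),\quad v(r)=\Im\log\varphi_s'(r), \]
and to estimate the integrand geometrically. Since $\varphi_s$ maps $\m R$ onto the boundary $\G$ of $H_s$ with $|\varphi_s'|$ bounded above and away from $0$ (Lemma~\ref{lem_u}), $v(r)$ is, up to an orientation-preserving bi-Lipschitz change of parameter, the tangent angle of $\G$ at $\varphi_s(r)$, and $w(r)$ is its increment relative to the tip $\varphi_s(0)=0$. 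I would split $\m R$ into the tip region $|r|\lesssim\sqrt s$ (the image of $\g[0,s]$) and the far region $|r|\gtrsim\sqrt s$ (the image of $\m R_+$), bound $|w|$ on each, and integrate against $dr/r^2$.

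On the tip region I would recycle the estimate established inside the proof of Lemma~\ref{lem_boundary_regularity}. There the slit part of $\G$ is the curve $\tilde\G(\rho)=\sqrt{\g(s-\rho^2)-\g(s)}$, with $\tilde\G'$ bounded above and away from $0$ and
\[ |\tilde\G'(\rho)-\tilde\G'(0)| \leq C\,\o(\rho^2) + C\int_0^{\rho}\frac{\o(u^2)}{u}\,du. \]
As tangent angles are invariant under orientation-preserving reparametrization and $\tilde\G'$ is bounded below, this gives $|w(r)|\leq C(\o(Cr^2)+\int_0^{C|r|}\o(u^2)u^{-1}\,du)$ for $|r|\lesssim\sqrt s$. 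Integrating against $dr/r^2$, using subadditivity of $\o$ (so $\o(Cr^2)\leq C'\o(r^2)$) and Fubini on the double integral, I expect to bound this contribution by $C'(\o(s)/\sqrt s+\int_0^{\sqrt s}\o(r^2)r^{-2}\,dr)$.

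The crux is the far region, where the generic H\"older bound $|w(r)|\leq Cr^{2\b}$ is useless because it carries no decay in $s$. Here I would use the smallness of $s$ directly. The image of $\m R_+$ is $x\mapsto\sqrt{x-\g(s)}$ on one side and its reflection on the other, whose tangent direction differs from $\m R_+$ by $-\tfrac12\arg(x-\g(s))$, while the tip tangent differs by $\tfrac12\arg(-\g'(s))$. From $\g'(0)=-1$, $\g(0)=0$ and \eqref{ineq_1} one has $\g(s)=-s+O(s\,\o(s))$ and $|\arg(-\g'(s))|\leq C\o(s)$; since $\Re(x-\g(s))>0$ for $x>0$, this gives $|\arg(x-\g(s))|\leq|\Im\g(s)|/(x+s)\leq C\o(s)$. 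Thus every relevant tangent direction lies within $C\o(s)$ of the real-axis direction, so after fixing the orientation of $\varphi_s$ (for which $v(\pm\infty)=0$) one obtains $|w(r)|\leq C\o(s)$ uniformly for $|r|\gtrsim\sqrt s$, whence $\int_{|r|>\sqrt s}|w|\,r^{-2}\,dr\leq C\o(s)/\sqrt s$.

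Adding the two contributions would give the first inequality $|L_s|\leq C'(\o(s)/\sqrt s+\int_0^{\sqrt s}\o(r^2)r^{-2}\,dr)$. The second inequality is then routine: inserting $\o(\d)\leq\norm{\g}_{1,\b}\d^\b$ makes both terms $\lesssim s^{\b-1/2}$ (the integral converging exactly because $\b>1/2$), while $\o(\d)\leq C\d\log(1/\d)$ gives $\lesssim s^{1/2}\log(1/s)$ via $\int_0^{\sqrt s}\log(1/r)\,dr\asymp\sqrt s\log(1/s)$. The main obstacle I anticipate is the far-region estimate: one must track the branch of the square root and the orientation of $\varphi_s$ carefully so that the reference direction at $\infty$ agrees with the tip direction, and check that both tangent angles are genuinely $O(\o(s))$ rather than $O(1)$. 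This $s$-dependent smallness, absent from the purely qualitative bound $|w|\lesssim r^{2\b}$ used in Proposition~\ref{prop_varphi}, is exactly what produces the decay claimed in the lemma.
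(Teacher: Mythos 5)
Your proposal is correct and takes essentially the same route as the paper: the paper also starts from the representation \eqref{eq_Ls}, compresses your two-region analysis into the single pointwise bound $\abs{w_s(r)} \leq C\left(\o(r^2)\wedge \o(s)\right)$ (citing Lemma~\ref{lem_u} and ``a similar proof of Lemma~\ref{lem_boundary_regularity}'', where the tangential extension $\g(-u)=u$ along $\m R_+$ encodes exactly the far-region tangent-angle estimate $\abs{\arg(x-\g(s))}\lesssim \o(s)$ that you work out explicitly), then splits the integral at $\sqrt{s}$ and specializes $\o$ to the two moduli just as you do. Your extra iterated-integral term in the tip region, handled by Fubini, is a harmless refinement of the paper's $\o(r^2)$ bound and does not change the argument.
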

\begin{proof}
    We use the explicit expression for $L_s$. From equation \eqref{eq_Ls} in Proposition~\ref{prop_varphi}, 
    \[L_s = \frac{1}{\pi} \int_{-\infty}^{\infty} \frac{w_s(r) }{r^2} dr, \]
    where $w_s(r) = \Im \log (\varphi_s'(r)) -\Im \log (\varphi_s'(0))$. 
Since $s \leq R \leq 1/2$, from Lemma~\ref{lem_u} and a similar proof of Lemma~\ref{lem_boundary_regularity}, we easily deduce that
$$\abs{w_s(r)} \leq C ( \o(r^2) \wedge \o(s)).$$

This yields
\begin{align*} 
  \abs{L_s} &\leq \frac{2C}{\pi}  \left( \o(s) \int_{ \sqrt{s}}^{\infty} \frac{1}{r^2} dr +  \int_0^{\sqrt{s}} \frac{\o(r^2)}{r^{2}} dr \right) \\
  & =  C'  \left( \frac{\o(s)}{ \sqrt{s}} + \int_0^{\sqrt{s}} \frac{\o(r^2)}{r^{2}}  dr \right).
  \end{align*}

In particular, when $\o(\d) = \norm{\g}_{1,\b} \d^\b$, 
\[\abs{L_s} \leq C' \norm{\g}_{1,\b}\left(s^{\b -1/2}+ \frac{s^{\b - 1/2}}{2\b -1}\right) =  C'' s^{\b - 1/2}. \]
When $\o(\d) =  \d \log(1/\d),$
\begin{align*} 
  \abs{L_s} &\leq C' \left(s^{1/2} \log(1/s) - 2 \int_0^{\sqrt s} 
  \log(r) dr \right) \\
  &=C' \left(s^{1/2} \log(1/s) - 2 \left[x\log(x) - x\right]_0^{\sqrt s} \right)\\
  & \leq C'' s^{1/2} \log(1/s),
  \end{align*}
where $C''$ does not depend on $s$ but only on $\b, R, S$ and $\norm{\g}_{1,\b}$.
\end{proof}

Now Theorem~\ref{thm_main} for $ 1/2 < \b \leq 1$ follows directly from Corollary~\ref{cor_1.5_2}, Corollary~\ref{cor_Ls} and Lemma~\ref{lem_Ls_bound}.

\section{Comments}\label{sec_comments}
\subsection{The sharpness of Theorem~\ref{thm_main}}\label{subsec_sharpness}
 As we already argued in the introduction, as the converse of Theorem~\ref{thm_wong}, 
Theorem~\ref{thm_main} is sharp in the range $\b\in (0,1/2)\cup(1/2,1)$.
In fact, for those values of $\b$, the regularity of the driving function implies  (Theorem~\ref{thm_wong}) capacity regularity of the generated curve which implies arclength regularity of the curve. Then by Theorem~\ref{thm_main}, it implies again the regularity of the driving function, where the regularities are taken accordingly with a shift of $1/2$ as in both theorems.

The example in \cite[Sec.~7.2]{lindtran2014regularity} shows that the
driving function of a $C^{1,1/2}$- curve need not be in $C^1$ but 
may only be in $C^{0,1}.$ Thus in the case
$\beta=1/2$, our theorem is sharp up to the logarithmic term. 
Similarly, \cite[Sec.~7.1]{lindtran2014regularity} provides an example of a $C^{1,1}-$
curve whose driving function is $C^{1,1/2}.$ We do not know if our result
can be improved by removing the term ``weakly'' in the cases $\beta=1/2$
and $\beta=1.$

The case of higher regularity requires the consideration of higher angular derivatives of the uniformizing map $\varphi_s$ at $0$. Nevertheless, we believe that the proof of the natural generalization of Theorem~\ref{thm_main} should be in the same spirit. 
Since the focus of this paper is on the Loewner energy, we refrain from discussing the converse of Theorem~\ref{thm_lind_tran} in full generality.

\subsection{Finite energy and slow spirals} Finite energy curves are rectifiable 
and therefore have tangents on a set of full length and full harmonic measure. However, 
we sketch an example showing that finite energy loops need not have tangents everywhere: Pick a sequence $\eps_k$ such that $\sum_k \eps_k$ diverges but $\sum_k \eps_k^2$ converges, and consider a sequence $r_k\to0$ of scales. By \cite{wang2016}, the chordal energy minimizing curve $\gamma_k$ from $0$ to $z_k=r_k e^{i (\pi/2 +\eps_k)}$ in $\HH$ has energy $I_k = -8 \ln \sin(\pi/2+ \eps_k) \sim 4\eps_k^2$ so that the conformal concatenation $\G_k$ (whose mapping-out function is $G_k = g_k\circ g_{k-1}\circ...\circ g_1$ and $g_i$ is the mapping-out function of $\g_i$)
has uniformly bounded energy. Denote $\alpha_k$ the tangent angle of the tip of $\G_k.$
Since $G_k$ behaves like the square-root map near the tip of $\G_k$, given $r_1,r_2,...,r_k$ we have
$\alpha_{k+1}=\alpha_k + 2\eps_k + o(1)$ as $r_{k+1}\to0.$ Thus the sequence $r_n$ can be chosen inductively in such a way that $\alpha_n \geq \alpha_1+\sum_1^{n-1} \eps_k$ for all $n.$ Consequently, the limiting curve $\Gamma=\cup_k\Gamma_k$ has an infinite spiral at its tip and does not possess a tangent there.

\subsection{Consequences of Theorem~\ref{thm_main_2}}
\label{isotopy}
      
   \begin{figure}[ht]
 \centering
 \includegraphics[width=0.7\textwidth]{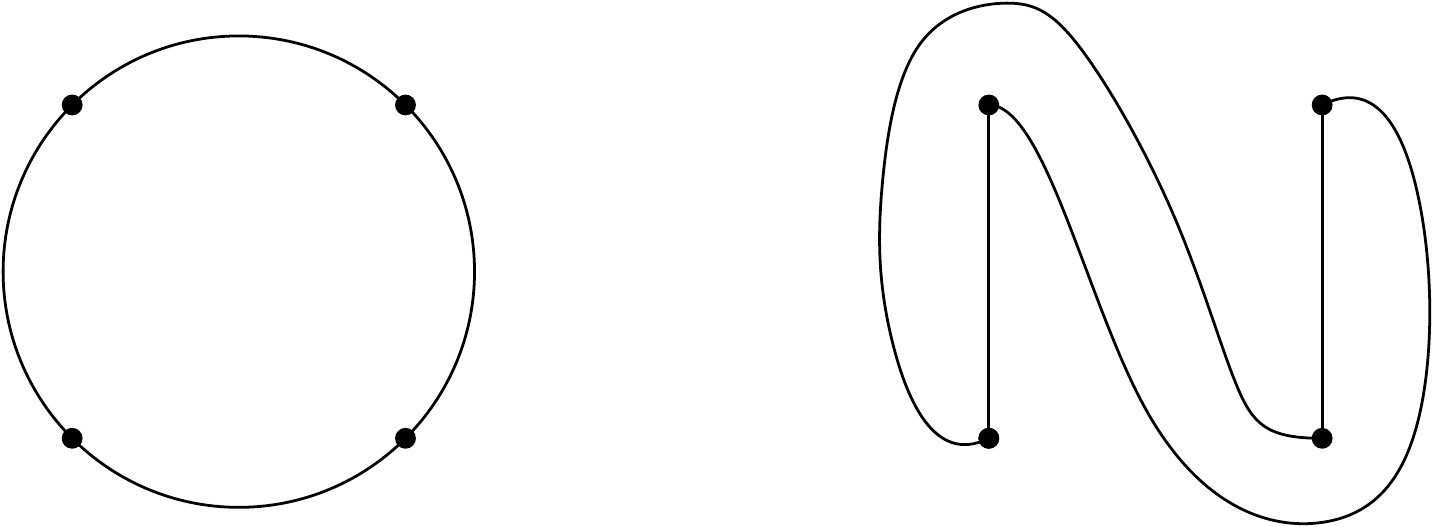}
 \caption{\label{fig_4points} Two non-isotopic loops passing through four points in the same order.} 
 \end{figure}
 
Proposition \ref{prop_optimal_loop} and Corollary \ref{cor_minimal_energy} can be generalized as follows:
As before, fix a collection of distinct points $\ad{z} = (z_0, z_1, z_2,\cdots, z_n)$ 
and consider curves $\gamma$ visiting these points in order. 
Figure \ref{fig_4points} shows two such curves, visiting
the same points in the same order, that cannot be continuously deformed into each other while fixing the 
points and keeping the curves simple. 
For three distinct points (the case $n=2$) there is only one isotopy class, and the minimal energy is 0.
For four or more points, there are always countably infinite many classes.
The proof of Proposition \ref{prop_optimal_loop} can easily be modified to show that
each of these isotopy classes of curves contain at least one loop energy minimizer. More precisely,
fix a Jordan curve $\gamma_0$ compatible with $\ad{z}$, 
denote $\mc{L}(\ad{z},\gamma_0)$ the set of all Jordan 
curves $\gamma_1$ for which there is a homotopy $\gamma_t$ relative $\ad{z}$ through homeomorphisms (that is, in addition to the joint continuity of $\gamma_t(s),$ we require that each $\gamma_t$ is a Jordan curve, and that $\gamma_t(\gamma_0^{-1}(z_j))=z_j$ for all $j=0,1,...,n$ and all $0\leq t\leq 1$)
and set
$$I^L(\{\ad{z}, \gamma_0\}) := \inf_{\g \in \mc{L}(\ad{z},\gamma_0)} I^L(\g),$$
where we have dropped the root in the above expression since the loop energy is root-invariant.

Then we have:
\begin{proposition}\label{prop_optimal_loop_isotopy} There exists $\g \in \mc{L}(\ad{z},\gamma_0)$ such that 
$I^L(\g) =  I^L(\{\ad{z}, \gamma_0\})$, and every such $\g$ is at least weakly $C^{1,1}$. 
\end{proposition}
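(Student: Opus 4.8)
The plan is to follow the proof of Proposition~\ref{prop_optimal_loop} almost line by line, the only genuinely new point being that the energy minimizer obtained as a limit of a minimizing sequence must be shown to stay in the prescribed isotopy class $\mc{L}(\ad{z},\gamma_0)$. First I would record that the class is non-empty with finite infimum: any Jordan curve can be isotoped, relative to the finite set $\ad{z}$, to a curve made of tangentially concatenated circular arcs, which is $C^{1,1}$ and therefore of finite energy by Proposition~\ref{prop_finite_loop}. Hence $I^L(\{\ad{z},\gamma_0\})<\infty$ and a minimizing sequence exists.

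For existence I would argue exactly as before. Let $(\g_n)\subset\mc{L}(\ad{z},\gamma_0)$ be minimizing and $A=\sup_n I^L(\g_n)<\infty$. By Proposition~\ref{prop_quasicircle} each $\g_n$ is a $K(A)$-quasicircle, so there are $K$-quasiconformal maps $\varphi_n$ of $\Chat$ with $\varphi_n(S^1)=\g_n$, which after a Möbius pre-composition we normalize by $\varphi_n(e^{2\pi i k/3})=z_k$ for $k=0,1,2$. This is a normal family; passing to a subsequence, $\varphi_n\to\varphi$ uniformly for the spherical metric, and $\varphi$ is again a $K$-quasiconformal homeomorphism. Then $\g:=\varphi(S^1)$ is a Jordan curve through the $z_j$ in order, and Lemma~\ref{lem_lower_semicontinuity} gives $I^L(\g)\le\liminf_n I^L(\g_n)=I^L(\{\ad{z},\gamma_0\})$, so $\g$ is an energy minimizer once we know it lies in the right class.

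The main obstacle is precisely this last point: topological stability of the isotopy class under uniform convergence of the uniformizing maps. I would set $w_j^n:=\varphi_n^{-1}(z_j)$ and $w_j:=\varphi^{-1}(z_j)$; after a further subsequence $w_j^n\to w_j$, and these limits are distinct and cyclically ordered because the $z_j$ are distinct and $\varphi$ is injective. Choosing orientation-preserving homeomorphisms $R_n$ of $S^1$ with $R_n(w_j)=w_j^n$ and $R_n\to\mathrm{id}$ uniformly, extending them radially to homeomorphisms $\hat R_n\to\mathrm{id}$ of $\Chat$ preserving $S^1$, and setting $\hat\varphi_n:=\varphi_n\circ\hat R_n$, one gets $\hat\varphi_n(S^1)=\g_n$, $\hat\varphi_n(w_j)=z_j$ and $\hat\varphi_n\to\varphi$ uniformly. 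Then $F_n:=\varphi^{-1}\circ\hat\varphi_n$ is a homeomorphism of $\Chat$ fixing the finite set $\{w_j\}$ with $F_n\to\mathrm{id}$; the key (standard) fact is that a homeomorphism of the sphere fixing a finite set and sufficiently close to the identity is isotopic to the identity relative to that set. Transporting such an isotopy by $\varphi$ and restricting to $S^1$ produces an isotopy of marked Jordan curves from $\g$ to $\g_n$ keeping the image of $w_j$ equal to $z_j$ throughout; since $\g_n$ is isotopic to $\gamma_0$ relative to $\ad{z}$, transitivity gives $\g\in\mc{L}(\ad{z},\gamma_0)$.

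Finally, the regularity assertion I would prove as in Proposition~\ref{prop_optimal_loop}, with one added remark. If $z_i,z_{i+1}$ split $\g$ into arcs $a_{i,1}$ (containing no other marked point) and $a_{i,2}$, minimality forces $a_{i,1}$ to be the hyperbolic geodesic of $\Chat\setminus a_{i,2}$: by Corollary~\ref{cor_two_slits} replacing $a_{i,1}$ by this geodesic lowers the energy, and it does not leave the isotopy class, since $a_{i,1}$ and the geodesic are simple arcs with the same endpoints in the simply connected domain $\Chat\setminus a_{i,2}$ and the region they bound contains no point of $\ad{z}$. Thus $a_{i,1}\cup a_{i+1,1}$ is a geodesic pair, and the classification recalled after Proposition~\ref{prop_optimal_loop} leaves only two possibilities: a logarithmic spiral, excluded because it has infinite energy while $\g$ minimizes a finite one, or the minimal-energy chord, whose driving function is $C^{1,1/2}$ and which is hence weakly $C^{1,1}$. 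Therefore every minimizer is at least weakly $C^{1,1}$.
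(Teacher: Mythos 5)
Your proposal is correct and follows exactly the route the paper intends: the paper's own ``proof'' is merely the remark that the argument of Proposition~\ref{prop_optimal_loop} ``can easily be modified,'' and your modification is the right one --- the same minimizing sequence, quasicircle bound, normal family, and Lemma~\ref{lem_lower_semicontinuity}, together with the two points the paper leaves implicit, namely that the limit stays in the isotopy class (your $F_n\to\mathrm{id}$ argument, resting on the standard fact that a sphere homeomorphism fixing a finite set and close to the identity is isotopic to the identity rel that set) and that replacing $a_{i,1}$ by the hyperbolic geodesic of $\Chat\setminus a_{i,2}$ does not change the class, since any two embedded arcs with common endpoints in a simply connected domain whose other marked points all lie on $a_{i,2}$ are isotopic rel endpoints. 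Both supplied details are sound, so this is essentially the paper's proof with its gaps properly filled.
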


It seems reasonable to believe that the minimizer in each class is unique.
In any case, every minimizer has the property that the arc between consecutive points is a hyperbolic geodesic in the 
complement of the rest of the loop as in the proof of Proposition \ref{prop_optimal_loop}.

\medskip

{\bf Acknowledgements}    We would like to thank Wendelin~Werner for discussions on the loop Loewner energy, Fredrik Viklund for his very useful comments on the first draft, Huy Tran for discussions on the quasiconformality of finite energy loops, Don Marshall for his contribution to the study of geodesic pairs, and Brent Werness for his permission to include his simulation of energy minimizing curves. We also thank the referee for very helpful comments. This work was supported by the National Science Foundation [DMS1362169, DMS1700069 to S.R.]; and Swiss National Science Foundation [SNF155922 and its mobility grant to Y.W.].


\begin{thebibliography}{0}




\bibitem{astala2008elliptic}
Astala, K., Iwaniec, T., Martin, G. (2008):
 \textit{ Elliptic Partial Differential Equations and Quasiconformal
  Mappings in the Plane.}
  Princeton University Press. 

\bibitem{DeBranges1985}
De~Branges, L. (1985):
 \textit{ A proof of the {Bieberbach} conjecture.}
  Acta Math., \textbf{154}, no.~1-2, 137--152. 


\bibitem{Dubedat2007commutation}
Dub{\'e}dat, J. (2007):
 \textit{ Commutation relations for {Schramm-Loewner} evolutions.}
  Commun. Pure. Appl. Math.,
  \textbf{60}, no.~12, 1792--1847. 

\bibitem{Dubedat2009partition}
Dub{\'e}dat, J.  (2009):
\textit{SLE and the free field: Partition functions and couplings.}
J. Amer. Math. Soc., \textbf{2}, no.~4, 995--1054.

\bibitem{EE2001}
Earle, C. J., Epstein, A. L. (2001):
\textit{Quasiconformal variation of slit domains.}
Proc. Amer. Math. Soc. \textbf{129}, 3363-3372. 


\bibitem{friz2015}
Friz, P., Shekhar, A. (2017):
  \textit{On the existence of SLE trace: finite energy drivers and
  non-constant $\kappa$.}
Probab. Theory Relat. Fields, \textbf{169}, 1-2.
 
 \bibitem{GM}
Garnett, J., Marshall, D. (2005):
\textit{Harmonic measure.} Cambridge Univ. Press, Cambridge.

\bibitem{KNK2004exact}
Kager, W., Nienhuis, B., Kadanoff, L.P.  (2004):
\textit{Exact solutions for Loewner evolutions.}
J. Stat. Phys \textbf{115}, 805--822. 

\bibitem{lawler2008conformally}
Lawler, G. (2008):
 \textit{ Conformally invariant processes in the plane.}
   Amer. Math. Soc. 
   
   
\bibitem{lawler2009part}
Lawler, G. (2009):
\textit{Partition Functions, Loop Measure, and Versions of SLE. }
J. Stat. Phys., \textbf{134}, 813--837. 

\bibitem{lawler2001values}
Lawler, G., Schramm, O., Werner, W. (2001):
 \textit{ Values of {Brownian} intersection exponents, {I}: {Half-plane}
  exponents.}
  Acta Math., \textbf{187}, no.~2, 237--273.

\bibitem{lawler2003conformal}
Lawler, G., Schramm, O., Werner, W. (2003):
 \textit{ Conformal restriction: the chordal case.}
  J. Amer. Math. Soc., \textbf{16}, no.~4, 917--955.
  

\bibitem{lehto2012univalent}
Lehto, O. (2012):
 \textit{ Univalent functions and Teichm{\"u}ller spaces.}
  Springer. 

\bibitem{lehto1973quasiconformal}
Lehto, O., Virtanen, V. (1973):
 \textit{ Quasiconformal mappings in the plane.}
  Springer.

\bibitem{lind2005sharp}
Lind, J. (2005):
 \textit{ A sharp condition for the {Loewner} equation to generate slits.}
  Ann. Acad. Sci. Fenn.  Math., \textbf{30}, 143--158.

\bibitem{lind2010collisions}
Lind, J., Marshall, D., Rohde, S. (2010):
 \textit{ Collisions and spirals of {Loewner} traces.}
  Duke Math. J., \textbf{154}, no.~3:527--573.
  
\bibitem{lindtran2014regularity}
Lind, J., Tran, H. (2016):
\textit{Regularity of Loewner curves.}
 Indiana Univ. Math. J., \textbf{65}, 1675--1712.  

\bibitem{Loewner1923}
Loewner, K. (1923):
 \textit{ Untersuchungen \"uber schlichte konforme Abbildungen des
  Einheitskreises I.}
  Math. Ann., \textbf{89}, no.~1-2, 103--121.

\bibitem{marshall2005loewner}
Marshall, D., Rohde, S. (2005):
 \textit{ The {Loewner} differential equation and slit mappings.}
  J. Amer. Math. Soc., \textbf{18}, no.~4, 763--778.

\bibitem{MRW2017geodesic}
Marshall, D., Rohde, S., Wang, Y.:
\textit{Hyperbolic geodesic graphs.}
In preparation.

\bibitem{pomm1992boundary}
Pommerenke, C. (1992):
\textit{Boundary Behaviour of Conformal Maps.}
Springer, Grundlehren Math. Wiss.

\bibitem{Schramm2000}
Schramm, O. (2000):
\textit{Scaling limits of loop-erased random walks and
uniform spanning trees. } Israel J. Math.
\textbf{118}, 221--288.
  

\bibitem{wang2016}
Wang, Y. (2016):
\textit{The energy of a deterministic Loewner chain: Reversibility and interpretation via SLE$_{0+}$.}
To appear in J. Eur. Math. Soc.


\bibitem{W2}
Wang, Y. (2018):
\textit{Equivalent descriptions of the Loewner energy.}
Preprint. 

\bibitem{war1932}

Warschawski, S. (1932): 
\textit{\"Uber das Randverhalten der Ableitung der Abbildungsfunktion bei konformer Abbildung. } (German) Math. Z. \textbf{35} no.~1, 321--456. 

\bibitem{werner2004st_flour}
Werner, W. (2004):
 \textit{ Random planar curves and {Schramm-Loewner} evolutions.}
  Ecole d'Et\'e de Probabilit\'es de Saint-Flour XXXII, Lectures Notes in Math. Springer, 1840,
  107--195.



\bibitem{wong2014}
Wong, C. (2014):
\textit{Smoothness of Loewner slits.}
Trans. Amer. Math. Soc., \textbf{366}, 1475-1496.

\bibitem{zhan2008}
Zhan, D.  (2008):
\textit{Reversibility of chordal {SLE}.}
Ann. Probab., \textbf{36}, 1472--1494.




\end{thebibliography}
\end{document}